\definecolor{winered}{rgb}{0.7,0,0}
\definecolor{lessblue}{rgb}{0,0,0.7}
\def\@tocline#1#2#3#4#5#6#7{
\begingroup
  \par
    \parindent\z@ \leftskip#3 \relax \advance\leftskip\@tempdima\relax
                  \rightskip\@pnumwidth plus 4em \parfillskip-\@pnumwidth
    \ifcase #1 
       \vskip 0.6em \hskip 0em 
       \or
       \or \hskip 0em 
       \or \hskip 1em 
    \fi%
    %
    #6
    %
    \nobreak\relax{\leavevmode\leaders\hbox{\,.}\hfill}
    \hbox to\@pnumwidth {\@tocpagenum{#7}}
  \par
\endgroup
}
 \def\l@section{\@tocline{0}{0pt}{0pc}{}{}}
\renewcommand{\tocsection}[3]{%
  \indentlabel{\@ifnotempty{#2}{ 
    \ignorespaces\bfseries{#2. #3}}}
  \indentlabel{\@ifempty{#2}{\ignorespaces\bfseries{#3}}{}} 
    \vspace{1.5pt}}
\renewcommand{\tocsubsection}[3]{%
  \indentlabel{\@ifnotempty{#2}{
    \ignorespaces#2. #3}}
  \indentlabel{\@ifempty{#2}{\ignorespaces #3}{}}
    \vspace{1.5pt}}
\renewcommand{\tocsubsubsection}[3]{%
  \indentlabel{\@ifnotempty{#2}{
    \ignorespaces#2. #3}}
  \indentlabel{\@ifempty{#2}{\ignorespaces #3}{}}
    \vspace{1.5pt}}
\def\@nomenstarted{0}
\newlength{\@nomenoldtabcolsep}
\newcommand{\nomenstart}
  {%
    \def\@nomenstarted{1}%
    \setlength{\@nomenoldtabcolsep}{\tabcolsep}%
    \setlength{\tabcolsep}{3.5pt}%
    \begin{longtable}{p{0.11\textwidth} p{0.86\textwidth}}
  }
\newcommand{\nomenitem}[2]{%
    \ifcase\@nomenstarted%
      \or 
      \or \\ 
    \fi%
    #1\,{\leavevmode\leaders\hbox{\,.}\hfill} & #2%
    \def\@nomenstarted{2}%
  }%
\newcommand{\nomenend}
  {\\%
      \end{longtable}%
      \setlength{\tabcolsep}{\@nomenoldtabcolsep}%
      \def\@nomenstarted{0}%
  }
\numberwithin{equation}{section}
\numberwithin{figure}{section}
\newtheorem{thm}{Theorem}[section]
\newtheorem{prop}[thm]{Proposition}
\newtheorem{lemma}[thm]{Lemma}
\newtheorem*{thm*}{Theorem}
\newtheorem*{prop*}{Proposition}
\newtheorem*{cor*}{Corollary}
\newtheorem*{conj*}{Conjecture}
\theoremstyle{definition}
\newtheorem{definition}[thm]{Definition}
\theoremstyle{remark}
\newtheorem{rmk}[thm]{Remark}
\newtheorem{example}[thm]{Example}
\newcommand{\mc}{\mathcal}
\newcommand{\cB}{\mc B}
\newcommand{\cC}{\mc C}
\newcommand{\cD}{\mc D}
\newcommand{\cL}{\mc L}
\newcommand{\cO}{\mc O}
\newcommand{\cS}{\mc S}
\newcommand{\cT}{\mc T}
\newcommand{\cU}{\mc U}
\newcommand{\cV}{\mc V}
\newcommand{\ms}{\mathscr}
\newcommand{\sN}{\ms N}
\newcommand{\sS}{\ms S}
\newcommand{\sV}{\ms V}
\newcommand{\N}{\mathbb{N}}
\newcommand{\R}{\mathbb{R}}
\newcommand{\Sph}{\mathbb{S}}
\newcommand{\sfp}{\mathsf{p}}
\newcommand{\sfs}{\mathsf{s}}
\newcommand{\sfJ}{\mathsf{J}}
\newcommand{\sfV}{\mathsf{V}}
\newcommand{\frakM}{\mathfrak{M}}
\newcommand{\mathspan}{\operatorname{span}}
\newcommand{\sgn}{\operatorname{sgn}}
\newcommand{\expb}{\exp^{\mathrm{b}}}
\newcommand{\Lightb}{L^{\mathrm{b}}}
\newcommand{\Lightbp}{L^{\mathrm{b,+}}}
\newcommand{\reg}{{\mathrm{reg}}}
\newcommand{\eps}{\epsilon}
\newcommand{\Lra}{\Longrightarrow}
\newcommand{\hra}{\hookrightarrow}
\newcommand{\ol}{\overline}
\newcommand{\pa}{\partial}
\newcommand{\ul}{\underline}
\newcommand{\wt}{\widetilde}
\newcommand{\xra}{\xrightarrow}
\newcommand{\bop}{{\mathrm{b}}}
\newcommand{\Vf}{\mathcal V}
\newcommand{\Vb}{\Vf_\bop}
\newcommand{\rcTbdual}[1][]{\ensuremath{\overline{{}^{\bop}T^*\ifthenelse{\isempty{#1}}{}{_#1}}}}
\newcommand{\bhm}[1][]{\ensuremath{M_{\bullet\ifthenelse{\isempty{#1}}{}{,#1}}}}
\newcommand{\CI}{\cC^\infty}
\newcommand{\openbigpmatrix}[1]
  {%
    \def\@bigpmatrixsize{#1}%
    \addtolength{\arraycolsep}{-#1}%
    \begin{pmatrix}%
  }
\newcommand{\closebigpmatrix}
  {%
    \end{pmatrix}%
    \addtolength{\arraycolsep}{\@bigpmatrixsize}%
  }
\newcommand{\itref}[1]{(\ref{#1})}
\newcommand{\inclfig}[1]{\includegraphics{#1}}
\begin{document}

\title[Reconstruction from boundary light observation sets]{Reconstruction of Lorentzian manifolds from boundary light observation sets}

\author{Peter Hintz}
\address{Department of Mathematics, University of California, Berkeley, CA 94720-3840, USA}
\email{phintz@berkeley.edu}

\author{Gunther Uhlmann}
\address{Department of Mathematics, University of Washington, Seattle, WA 98195-4350, USA}
\address{Institute for Advanced Study of the Hong Kong University of Science and Technology, Hong Kong, China}
\address{Department of Mathematics and Statistics, University of Helsinki, Finland}
\email{gunther@math.washington.edu}

\date{May 18, 2017. Final revision: October 8, 2017.}

\subjclass[2010]{Primary: 35C50, Secondary: 35L05, 35L20, 58J47}

\begin{abstract}
  On a time-oriented Lorentzian manifold $(M,g)$ with non-empty boundary satisfying a convexity assumption, we show that the topological, differentiable, and conformal structure of suitable subsets $S\subset M$ of sources is uniquely determined by measurements of the intersection of future light cones from points in $S$ with a fixed open subset of the boundary of $M$; here, light rays are reflected at $\pa M$ according to Snell's law. Our proof is constructive, and allows for interior conjugate points as well as multiply reflected and self-intersecting light cones.
\end{abstract}

\maketitle

\section{Introduction}
\label{SecIntro}

Let $(M,g)$ be a Lorentzian manifold with a non-empty boundary, and denote by $M^\circ$ its interior. We consider the problem of reconstructing the topological, differentiable, and conformal structure of subsets $S\subset M^\circ$ by boundary observations of light cones emanating from points in $S$, with light rays being reflected at $\pa M$ according to Snell's law. We accomplish this under a convexity assumption on $\pa M$ and assuming that broken (reflected) null-geodesics from $S$ have no conjugate points lying on $\pa M$. The present paper is similar in spirit to the work by Kurylev, Lassas, and Uhlmann \cite{KurylevLassasUhlmannSpacetime}: they consider a related reconstruction problem using light observation sets in the \emph{interior} of globally hyperbolic spacetimes without boundary. The presence of a boundary leads to a much richer structure of the broken null-geodesic flow, and observing only \emph{at} the boundary limits the available leeway when light cones are singular (conjugate points or self-intersections) at $\pa M$.

To state a simple example to which our main result, stated below, applies, consider the manifold $M=\{(t,x)\in\R^{1+2}\colon |x|<1\}$, equipped with the Minkowski metric $g=-dt^2+dx^2$, and let the set $S$ of sources be an open subset $S\subseteq\{(t,x)\colon|t|<1/2-|x|\}\subset M$. The boundary light observation set from a point $q=(t_0,x_0)\in S$ within the subset $\cU:=\{(t,x)\colon 0<t<2,\ |x|=1\}\subset\pa M$ is the intersection
\[
  \cL^+_q \cap \cU = \{ (t,x)\in\cU \colon t\geq t_0,\ t-t_0=|x-x_0| \}
\]
of the future light cone from $q$ with $\cU$. See Figure~\ref{FigIntroEx}. Let $\cS=\{\cL^+_q\cap\cU\colon q\in S\}$ denote the family (as an unlabelled set) of boundary light observation sets. Then from $\cS$, we can reconstruct $S$ as a smooth manifold, as well as the conformal class of the metric $g|_S$.

This example generalizes in a straightforward manner to higher dimensions; in $1+3$ dimensions, this would be a very simple model for wave propagation in the interior $|x|<1$, $x\in\R^3$, of the Earth, with observations taking place for some limited period of time on the surface of the Earth. More generally, our main theorem allows the wave speed to be inhomogeneous, anisotropic, and time-dependent.

\begin{figure}[!ht]
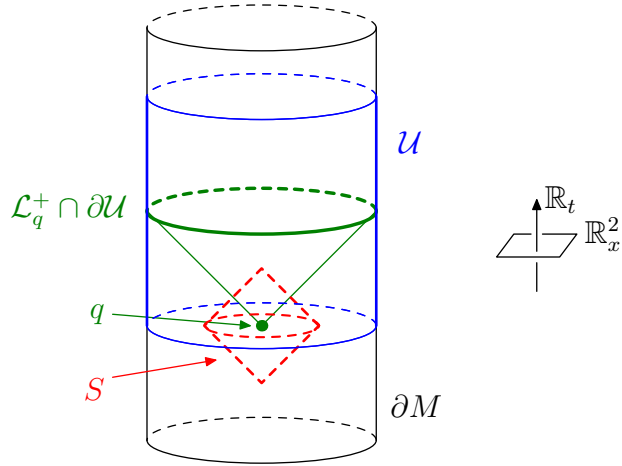

\centering
  \inclfig{IntroEx}
  \caption{One can recover the topological, differentiable, and conformal structure of $S$ from the collection of boundary light observation sets.}
\label{FigIntroEx}
\end{figure}

In general, the future light cone $\cL^+_q$ from a point $q\in M$ is defined as the union of all future-directed broken null-geodesics. (See Figure~\ref{FigGeoGBroken} for an illustration, and Definition~\ref{DefGeoG} for the precise definition.) Our main theorem applies to rather general Lorentzian manifolds, and allows for the reconstruction of $S$ from boundary light observation sets involving multiple reflections. (See Remark~\ref{RmkRUsefulRefl}.) To set this up, we define the class of manifolds we will work with:

\begin{definition}
\label{DefIntroAdm}
  Let $n\geq 1$. Let $(M,g)$ be a smooth connected $(n+1)$-dimensional Lorentzian manifold with non-empty boundary; thus, $g$ has signature $(-,+,\ldots,+)$. We call $(M,g)$ \emph{admissible} if
  \begin{enumerate}
    \item there exists a proper, surjective function $t\colon M\to\R$ such that $dt$ is everywhere timelike;
    \item the boundary $\pa M$ is timelike, i.e.\ the induced metric $g_\pa:=g|_{\pa M}$ is Lorentzian;
    \item\label{ItIntroAdmNullConvex} $\pa M$ is \emph{null-convex}: if $\nu$ denotes the outward pointing unit normal vector field on $\pa M$, then
    \begin{equation}
    \label{EqIntroAdmNull}
      II(V,V) = g(\nabla_V\nu, V) \geq 0
    \end{equation}
    for all null vectors $V\in T_p\pa M$.
  \end{enumerate}
\end{definition}

We recall that a vector $V\in T_p M$ in a Lorentzian manifold $(M,g)$ is called \emph{timelike}, \emph{spacelike}, or \emph{lightlike} (\emph{null}) whenever $g_p(V,V)<0$, $g_p(V,V)>0$, or $g_p(V,V)=0$, respectively. An admissible manifold $(M,g)$ is time orientable, as we can declare $dt$ to be \emph{past} timelike. (We refer the reader to \cite{ONeillSemi} for further background on Lorentzian geometry.) If $n=1$, then condition~\itref{ItIntroAdmNullConvex} is vacuous.

For the purposes of this introduction, we will work with manifolds $(M,g)$ with \emph{strictly null-convex} boundaries, that is, \eqref{EqIntroAdmNull} holds with \emph{strict} inequality for $V\neq 0$. In this case, all broken null-geodesics are well-defined globally on $M$, see \S\ref{SubsecGeoTame}.

\begin{thm}
\label{ThmIntro}
  Let $(M_j,g_j)$, $j=1,2$, be two admissible Lorentzian manifolds with strictly null-convex boundaries, let $S_j\subset M_j^\circ$ be open with compact closure in $M_j$, and let $\cU_j'\Subset\cU_j\subset\pa M_j$ be open. Let
  \[
    \cS_j:=\{\cL^+_q\cap\cU_j\colon q\in S_j\}.
  \]
  Assume that for $q_1,q_2\in\ol{S_j}$, the equality of boundary light observation sets $\cL_{q_1}^+\cap\cU'_j=\cL_{q_2}^+\cap\cU'_j$ implies $q_1=q_2$. Assume moreover that for $q\in S_j$, no point in $\cU_j$ which lies on a future-directed broken null-geodesic starting at $q$ is conjugate to $q$.

  Suppose there exists a diffeomorphism $\Phi\colon\cU_1\xra{\cong}\cU_2$ which identifies the families of boundary light observation sets, that is, $\cS_2=\{\Phi(L)\colon L\in\cS_1\}$. Then there exists a conformal diffeomorphism $\Psi\colon(S_1,g_1|_{S_1})\xra{\cong}(S_2,g_2|_{S_2})$.

  If in addition $\Phi$ is conformal for the metrics $g_j|_{\cU_j}$ on $\cU_j$ and time orientation preserving, then $\Psi$ preserves the time orientation as well.
\end{thm}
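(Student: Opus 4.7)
The plan is to build $\Psi$ in layers: a set-theoretic bijection, then a homeomorphism, a diffeomorphism, and finally a conformal diffeomorphism, using successively finer data carried by the family $\cS_j$. I would define $\Psi\colon\ol{S_1}\to\ol{S_2}$ by the rule $\cL^+_{\Psi(q)}\cap\cU'_2=\Phi(\cL^+_q\cap\cU'_1)$: existence of $\Psi(q)$ comes from the hypothesized matching $\cS_2=\{\Phi(L):L\in\cS_1\}$ restricted to $\cU'_j$, and uniqueness from distinguishability on $\cU'_j$. Strict null-convexity makes the broken null-geodesic flow globally well-defined and continuously dependent on initial data, so $q\mapsto\cL^+_q\cap\cU_j$ is continuous in the Hausdorff topology; together with compactness of $\ol{S_j}$ this promotes $\Psi$ to a homeomorphism.

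For the smooth structure near a fixed $q\in S_1$, I would select future-directed null vectors $v_1,\ldots,v_N\in T_q M_1$ whose broken null-geodesics first meet $\cU_1$ at regular arrival points $u_i$ (not conjugate to $q$), chosen so that the lines $\RR v_i\subset T_q M_1$ have trivial common intersection. The no-conjugate-points hypothesis implies that each arrival map $\alpha_i\colon q'\mapsto u_i(q')$ is smooth with $\ker d\alpha_i|_q=\RR v_i$; the combined map $\Lambda:=(\alpha_1,\ldots,\alpha_N)$ is then a smooth embedding of a neighborhood of $q$ into $\cU_1^N$, providing a chart. Near each arrival point, $\cL^+_{q'}\cap\cU_1$ is a smooth embedded hypersurface in $\cU_1$, so $u_i(q')$ is determined continuously by $q'$ as a smooth point on this hypersurface; hence $\Phi$ transports the chart to $S_2$, and the symmetric construction on $M_2$ shows $\Psi$ is a local diffeomorphism.

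For the conformal structure, at each regular arrival point $u_i$ the tangent hyperplane $T_{u_i}(\cL^+_q\cap\cU_1)\subset T_{u_i}\cU_1$ is a set-theoretic invariant of the observation set, hence preserved up to $\Phi$-pullback by the matching. Geometrically this hyperplane equals the image, under the differential of the broken-null-geodesic flow, of the $g_1$-orthogonal hyperplane $v_i^\perp\subset T_q M_1$, and thus determines the null line $\RR v_i$. Letting the $v_i$ range over all admissible directions reconstructs the full future null cone $\cN^+_q\subset T_q M_1$, and therefore the conformal class $[g_1]|_q$. Transporting through $\Phi$ yields $d\Psi(\cN^+_q)=\cN^\pm_{\Psi(q)}$. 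If $\Phi$ is additionally conformal and time-orientation preserving on $\cU_j$, then a future-directed null tangent at $u_i$ maps to a future-directed null tangent at $\Phi(u_i)$; running this backward along the broken null-geodesic arriving at $\Psi(q)$ forces $d\Psi(\cN^+_q)=\cN^+_{\Psi(q)}$.

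The hard part will be the smooth-structure step: finding enough null directions $v_i$ at each $q$ that avoid tangential hits, degenerate reflections, and boundary conjugate points, and verifying smoothness and embedding properties of the Snell-reflected joint arrival map $\Lambda$ in the presence of arbitrarily many reflections, interior conjugate points, and self-intersections of $\cL^+_q$ away from $\cU_j$. Strict null-convexity justifies the implicit-function arguments at each reflection, and the no-conjugacy-on-$\cU_j$ hypothesis provides maximal rank for each $\alpha_i$; stitching these ingredients into a single chart that matches set-theoretically across $\Phi$ is the technical heart of the proof.
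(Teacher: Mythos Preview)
Your overall layered strategy matches the paper's, and your Hausdorff-metric argument for the topology is a reasonable alternative to the paper's subbasis construction. The real gap is in the smooth-structure step: your arrival maps $\alpha_i\colon q'\mapsto u_i(q')$ are not well-defined from the boundary light observation data. For $q'$ near $q$, the set $\cL^+_{q'}\cap\cU_1$ near $u_i$ is an $(n-1)$-dimensional hypersurface in $\cU_1$, not a point, and nothing in the data singles out a distinguished point on it; your sentence ``$u_i(q')$ is determined continuously by $q'$ as a smooth point on this hypersurface'' is exactly where the construction fails. If instead you define $u_i(q')$ by following the broken null-geodesic from $q'$ in some direction $v_i(q')\in T_{q'}M_1$, you must specify $v_i(q')$, and any such choice (parallel transport, a local section of the null cone bundle) uses the interior metric $g_1$ rather than the observation sets, so it does not transport through $\Phi$. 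The paper's remedy is to intersect $\cL^+_{q'}$ with a fixed smooth curve $\mu\subset\cU_1$ transversal to $\cL^\reg_q$ at $u_i$, yielding a scalar $x^\mu(q')$ that \emph{is} determined by $\cL^+_{q'}\cap\cU_1$; the analogue of your kernel claim becomes the statement that the differentials $dx^\mu|_q$, as $\mu$ varies, span $T^*_q M$. A further point you have not addressed: from the data alone one cannot directly check which finite family $(x^{\mu_0},\ldots,x^{\mu_n})$ gives a chart, so the paper instead recovers the whole algebra $\CI(S)$ as those continuous functions locally expressible as smooth functions of the $x^\mu$.

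The same issue recurs in your conformal step. That $T_{u_i}(\cL^+_q\cap\cU_1)$ determines the null line $\R v_i\subset T_q M_1$ is true, but the determination runs the broken geodesic backward using $g_1$, so it is not a priori $\Phi$-equivariant. The paper instead characterizes $\R v_i$ intrinsically as $\{\mu'(0):\mu\in Q\}$, where $Q$ is the set of smooth curves in $S$ through $q$ whose light observation sets continue to pass through $u_i$ with the \emph{same} tangent hyperplane there; this uses only the already-reconstructed smooth structure on $S$ together with the sets $\cL^+_{\mu(r)}\cap\cU$, and therefore transports through $\Phi$.
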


Thus, if the smooth structure of the observation set $\cU_j$ is given, then the collection of light observation sets --- carrying no structure other than that of a set! --- uniquely determines the topological, differentiable, and conformal structure of the set of sources; given a conformal structure and time orientation on $\cU_j$, one can in addition recover the time orientation of the set of sources. See Theorem~\ref{ThmR} for the full statement which replaces the strict null-convexity condition with a certain non-degeneracy condition (called \emph{tameness} in \S\ref{SubsecGeoTame}) on broken null-geodesics.

The proof of Theorem~\ref{ThmIntro} proceeds in three steps. First, we define a topology on $\cS_j$ by declaring collections of boundary light observation sets to be open if they intersect, resp.\ miss, a fixed open, resp.\ compact, subset of $\cU$: this topology is shown to be equal to the subspace topology of $S_j$ via the bijection $S_j\ni q\mapsto\cL_q^+\cap\cU_j$; see \S\ref{SubsecRT}. Second, we show how to construct (intrinsically within $\cS_j$ and $\cU_j$) a large class of functions which are smooth on $S_j$: these functions $x^\mu$ assign to a point $q'$ close to a fixed point $q$ the unique parameter $x^\mu(q')$ along suitable curves $\mu\subset\cU_j$ at which $\mu$ intersects $\cL_{q'}^+$. (In \cite{KurylevLassasUhlmannSpacetime}, a similar construction was used \emph{globally}.) We show that \emph{all} smooth functions on $S_j$ are, locally, $\CI$ functions of these $x^\mu$ for varying $q$ and $\mu$; see \S\ref{SubsecRS}. In order to reconstruct the conformal class of $g_j$ on $S_j$, we show how to identify a large number of null-geodesics $s\mapsto q(s)$ in $S_j$ in terms of the boundary light observation sets of the points $q(s)$; see \S\ref{SubsecRC}. Since light cones are well-defined given merely the conformal class of a Lorentzian metric, one can in general not recover the metric itself. (Under additional assumptions, this may be possible, see~\cite[Corollary~1.3]{KurylevLassasUhlmannSpacetime}.) Finally, the time orientation on $S_j$ can be determined by analyzing the behavior of $\cL_q^+\cap\cU_j$ as $q$ moves along a timelike curve in $S_j$; see \S\ref{SubsecRO}.

It would be interesting to reconstruct suitable subsets of $(M,g)$ from \emph{active measurements}, namely from the Dirichlet-to-Neumann map of initial boundary value problems for non-linear wave equations. (In the boundary-less setting, the analogous inverse problem was first solved in the context of the quasilinear Einstein equation \cite{KurylevLassasUhlmannEinstein}, see also \cite{KurylevLassasUhlmannSpacetime}, with improvements by Lassas, Uhlmann, and Wang \cite{LassasUhlmannWangSemi,LassasUhlmannWangEinsteinMaxwell}.) The idea is to generate singular small amplitude distorted plane waves by imposing suitable singular Dirichlet data: these can be engineered so that their non-linear interaction generates point sources at points $q\in M^\circ$, allowing one to identify the boundary light observation set $\cL_q^+\cap\pa M$ by measuring singularities of the Neumann trace; this puts one into the setting of Theorem~\ref{ThmR}. We hope to address this problem in future work. See also~\cite{BelishevKurylevBCMethod,EskinOpticalBlackHoles,LassasOksanenDNDisjoint} for results in related contexts.

For further results on the reconstruction of Lorentzian manifolds, we mention Larsson's work \cite{LarssonSkyShadow} using broken causal lens data or sky shadow data (see also the related \cite{KurylevLassasUhlmannBrokenGeodesicRigidity}), and the work by Lassas, Oksanen, and Yang \cite{LassasOksananYangTimeMeasurements} on the reconstruction of the jet of a Lorentzian metric on a timelike hypersurface from time measurements. There is a large amount of literature on inverse problems on Riemannian manifolds with boundary; we refer to \cite{PestovUhlmann2DSimple,StefanovUhlmannVasyBoundaryRidigity} and the references therein.

The plan of the paper is as follows: in \S\ref{SubsecGeoA}, we analyze the properties of admissible Lorentzian manifolds and give an equivalent formulation of the null-convexity assumption; in \S\ref{SubsecGeoG}, we define the broken null-geodesic flow and discuss its basic properties. We introduce the important notion of \emph{tameness} in \S\ref{SubsecGeoTame}; on admissible manifolds with strictly null-convex boundary, all broken null-geodesics are tame. In \S\ref{SecR} finally, we prove the main result, Theorem~\ref{ThmR}, following the steps outlined above.

\section{Geometric preliminaries}
\label{SecGeo}

\subsection{Structure of admissible manifolds}
\label{SubsecGeoA}

We begin by elucidating the smooth structure of admissible manifolds, see Definition~\ref{DefIntroAdm}. We use the notation $\Vb(M)$ for the space of smooth vector fields on $M$ which are tangent to the boundary $\pa M$.

\begin{lemma}
\label{LemmaGeoATime}
  Let $(M,g)$ be an admissible Lorentzian manifold. Then $X:=\{t=0\}$ is a compact submanifold with boundary $\pa X\subset\pa M$, and there exists a diffeomorphism $M\cong\R_t\times X$. Furthermore, there exists a global future timelike vector field $T\in\Vb(M)$ such that $T t=1$.
\end{lemma}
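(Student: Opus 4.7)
The plan is in three stages: first, verify that $X = t^{-1}(0)$ is a compact smooth hypersurface-with-boundary in $M$; second, construct the vector field $T$ with all the required properties; third, use the flow of $T$ to obtain the diffeomorphism $M \cong \R_t \times X$.

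For the first stage, $X$ is compact since $t$ is proper, and $X \cap M^\circ$ is a smooth hypersurface in $M^\circ$ because $dt \neq 0$. To see that $\pa X := X \cap \pa M$ is a smooth submanifold of $\pa M$ (so that $X$ is a manifold-with-boundary), I verify that $t|_{\pa M}$ is a submersion. At $p\in\pa M$ decompose $dt\in T^*_pM$ as $dt = \alpha + \beta\,\nu^\flat$, where $\nu$ is the unit outward normal, $\nu^\flat := g(\nu,\cdot)$, and $\alpha\in T^*_p\pa M$ is extended to $T^*_pM$ by zero along $\nu$. Since $g(\nu,\nu)=1$, one has $g^{-1}(dt,dt) = g_\pa^{-1}(\alpha,\alpha) + \beta^2$, and this is negative by hypothesis, so $g_\pa^{-1}(\alpha,\alpha)<0$; in particular $\alpha = dt|_{T_p\pa M}$ is nonzero, and indeed timelike on $(\pa M, g_\pa)$.

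For the vector field $T$, set $a := -g^{-1}(dt,dt) > 0$ and let $T_1 := -\operatorname{grad}_g(t)/a$: this is future timelike with $T_1 t = 1$, but in general not tangent to $\pa M$. The same construction applied to the Lorentzian manifold $(\pa M, g_\pa)$ with the timelike function $t|_{\pa M}$ produces a future timelike vector field $T_\pa$ on $\pa M$, tangent to $\pa M$ by construction and satisfying $T_\pa t = 1$. Choose a collar neighborhood $U \cong \pa M \times [0,\eps)$ of $\pa M$ and extend $T_\pa$ to $\wt T_\pa$ on $U$ by $\wt T_\pa(p,s) := T_\pa(p)$ under the collar identification; for $\eps$ small, $\wt T_\pa$ remains future timelike on $U$. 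Let $\chi\in\CI(M)$ satisfy $\chi\equiv 1$ near $\pa M$ and $\supp\chi\subset U$, and put $T' := \chi\,\wt T_\pa + (1-\chi)\,T_1$. Since the set of future timelike vectors at each point forms a convex cone, $T'$ is future timelike throughout $M$; and because $\chi = 1$ on $\pa M$, we have $T'|_{\pa M} = T_\pa$, so $T' \in \Vb(M)$. Finally $T := T'/(T't)$ is well-defined, since $T't = dt(T') > 0$ by future-timelikeness; this $T$ is future timelike, tangent to $\pa M$, and satisfies $Tt = 1$.

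With $T$ in hand, the diffeomorphism follows from standard ODE theory. An integral curve $\gamma$ of $T$ satisfies $t(\gamma(s)) = t(\gamma(0)) + s$, so it remains in the compact set $t^{-1}([t(\gamma(0)) - R,\, t(\gamma(0)) + R])$ over $s\in[-R,R]$; hence $T$ is complete. Tangency to $\pa M$ ensures integral curves respect the decomposition $M = M^\circ \sqcup \pa M$. Define $\Phi\colon \R\times X \to M$ by $\Phi(s,x) := \gamma_x(s)$, where $\gamma_x$ is the integral curve of $T$ with $\gamma_x(0) = x$; then $\Phi$ is a smooth bijection with smooth inverse $p\mapsto(t(p),\,\gamma_p(-t(p)))$, hence a diffeomorphism. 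The main technical point is Stage~two: simultaneously achieving future-timelikeness, tangency to $\pa M$, and the normalization $Tt=1$. The collar/partition-of-unity patching resolves the tangency issue while preserving time orientation via convexity of the future timelike cone, and the final division by $T't$ delivers $Tt = 1$ without disturbing the other properties.
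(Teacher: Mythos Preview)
Your proof is correct and follows the same strategy as the paper: build a boundary-tangent future timelike vector field by constructing one on $\pa M$, extending inward, patching with an interior field via convexity of the future cone, and then normalizing so that $Tt=1$; finally, use its flow for the product decomposition. The paper phrases the patching as an abstract partition-of-unity argument over local charts rather than your explicit gradient-plus-collar construction, and it is slightly terser about why $t|_{\pa M}$ is a submersion, but the key ideas coincide.
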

\begin{proof}
  Since $t$ is proper with $dt\neq 0$, the first claim is immediate. Moreover, the time orientation on $M$ induces a time orientation on $\pa M$, since the latter is assumed to be Lorentzian; with this time orientation, $dt|_{\pa M}$ is past timelike.

  Since for $O\subset M$ open the set of future timelike vector fields $V\in\Vb(O)$ with $V t=1$ is convex, it suffices to construct $T$ locally. In the interior of $M$, this is straightforward. In a neighborhood $O$ of a point $p\in\pa M$, one first constructs $T'\in\cV(O\cap\pa M)$ with $T' t=1$; one then extends $T'$ arbitrarily to a vector field $\wt T\in\Vb(O)$, which thus satisfies $\wt T t>1/2$ in a smaller neighborhood $O'\subset O$ of $p$, thus $T=(\wt T t)^{-1}\wt T\in\Vb(O')$ is the desired vector field near $p$.

  The flow $\phi\colon\R\times X\in(s,x)\mapsto\exp_x(s T)\in M$ exists globally; indeed, $t(\exp_x(s T))=s$ for all $(s,x)$, since this holds for $s=0$, and the $s$-derivative of both sides is equal to $1$ by construction. The inverse of $\phi$ is given by $\phi^{-1}(p)=(a,\exp_p(-a T))$ when $p\in t^{-1}(a)$. Thus, $\phi$ establishes a diffeomorphism $\R\times X\cong M$.
\end{proof}

It will be useful to embed $(M,g)$ into a larger spacetime without boundary.
\begin{lemma}
\label{LemmaGeoAEmbed}
  There exists a time-oriented smooth Lorentzian manifold $(\wt M,\wt g)$ into which $M$ embeds isometrically as a submanifold with boundary.
\end{lemma}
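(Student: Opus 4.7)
The plan is to construct $\wt M$ by attaching an open collar to the outside of $\pa M$ and extending $g$ smoothly across it. Since $\pa M$ is timelike, the induced metric $g_\pa$ is Lorentzian; the orthogonal splitting $T_p M = T_p\pa M \oplus \R\nu_p$ together with the signature convention then forces $g(\nu,\nu)=+1$, so the outward unit normal $\nu$ is spacelike. I would use the normal exponential map $\Phi\colon(x,p)\mapsto\exp_p(x\nu_p)$ to identify a collar neighborhood $\cC$ of $\pa M$ with $[0,\epsilon)\times\pa M$, shrinking $\epsilon$ by a positive smooth function on $\pa M$ (via a partition of unity) to handle the non-compactness of $\pa M$. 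By Gauss's lemma, in these coordinates the metric takes the form $\Phi^*g = dx^2 + h_x$, where $\{h_x\}_{x\in[0,\epsilon)}$ is a smooth one-parameter family of Lorentzian metrics on $\pa M$ with $h_0=g_\pa$.

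Next I would extend the family $h_x$ smoothly across $x=0$: working in local coordinate charts on $\pa M$ and applying Seeley's extension theorem componentwise, then patching via a partition of unity on $\pa M$, one obtains a smooth family $\wt h_x$ of symmetric $2$-tensors on $\pa M$ defined for $x\in(-\epsilon',\epsilon)$ for some $\epsilon'>0$, with $\wt h_x=h_x$ for $x\geq 0$. By continuity of the signature, after shrinking $\epsilon'$ further (possibly by another smooth positive function on $\pa M$) we may assume $\wt h_x$ is Lorentzian throughout $(-\epsilon',\epsilon)\times\pa M$, so that $\wt g := dx^2 + \wt h_x$ is a smooth Lorentzian metric on $(-\epsilon',\epsilon)\times\pa M$.

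Finally, I would set $\wt M := M \sqcup \bigl((-\epsilon',\epsilon)\times\pa M\bigr)/\!\sim$, where $\sim$ identifies $(x,p)$ with $\Phi(x,p)\in\cC$ for $x\in[0,\epsilon)$. This is a smooth (Hausdorff, second countable) manifold without boundary near $\pa M$, into which $M$ embeds as a closed submanifold with boundary; the metrics $g$ and $dx^2+\wt h_x$ agree on the overlap by construction and thus piece together to a smooth Lorentzian metric $\wt g$ on $\wt M$. For the time orientation, the future timelike, $\bop$-vector field $T$ of Lemma~\ref{LemmaGeoATime}, being tangent to $\pa M$, extends to $(-\epsilon',\epsilon)\times\pa M$ as $T$ (independent of $x$) in the collar coordinates; after shrinking $\epsilon'$ once more, $T$ remains timelike, yielding a time orientation of $(\wt M,\wt g)$ compatible with the one on $M$.

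The only genuine obstacle is the extension step: smoothly extending $h_x$ across $x=0$ so that the result is still a Lorentzian metric. This is resolved by invoking Seeley (or any Whitney-type extension) to get smoothness, together with a continuity-of-signature argument to shrink the outward collar until the Lorentzian character is preserved; everything else is a gluing and collar-neighborhood routine.
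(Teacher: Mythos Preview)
Your argument is correct, but the paper takes a shorter, less constructive route. Instead of building an explicit outward collar via the normal exponential map and Seeley extension, the paper simply embeds $M$ into any boundaryless manifold $M'$ (for instance its double), extends $g$ to an arbitrary smooth symmetric $2$-tensor $\wt g$ on $M'$, extends the timelike function $t$ arbitrarily, and then invokes openness: the conditions ``$\wt g$ is Lorentzian'' and ``$dt$ is timelike'' are open, so they hold on some open neighborhood $\wt M\supset M$. Declaring $dt$ past timelike gives the compatible time orientation in one stroke. Your approach buys you more: the extended metric is in Gaussian normal form $dx^2+\wt h_x$ near $\pa M$, which can be convenient for later computations; the paper's approach buys brevity and avoids the collar/Seeley machinery entirely. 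One small slip in your write-up: with $\nu$ the \emph{outward} unit normal, the map $(x,p)\mapsto\exp_p(x\nu_p)$ for $x\in[0,\epsilon)$ would leave $M$; you want $\exp_p(-x\nu_p)$ (or the inward normal) to parametrize the interior collar.
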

\begin{proof}
  Let $M'$ be any open manifold into which $M$ embeds as a submanifold with boundary, e.g.\ take $M'$ to be the double of $M$. Extend $g$ to a symmetric 2-tensor $\wt g$ on $M'$, and extend $t$ to an arbitrary smooth function, still denoted $t$, on $M'$. Since the set of Lorentzian metrics on a fixed vector space is open, and since the condition that $dt$ is timelike (in particular $dt\neq 0$) is open, there exists an open neighborhood $\wt M$ of $M$ on which $\wt g$ is Lorentzian and $dt$ timelike; declaring $dt$ to be \emph{past} timelike endows $\wt M$ with a time orientation.
\end{proof}

Write $\wt\exp$ for the exponential map on $(\wt M,\wt g)$. Denote by $g^+$ a fixed smooth Riemannian metric on $(\wt M,\wt g)$, and write
\[
  |V|_{g^+}:=g^+(V,V)^{1/2}.
\]
(All our arguments will take place in compact subsets of $M$, hence the concrete choice of $g^+$ will be irrelevant.)

We now analyze the null-convexity condition. (We encourage the reader to keep the simpler case in mind that the boundary is \emph{strictly} null-convex.) We introduce the outward ($+$) and inward ($-$) pointing tangent bundles
\[
  T^\pm_{\pa M}M = \{ V\in T_{\pa M}M \colon \pm g(V,\nu) > 0 \},
\]
where $\nu$ is the outward pointing unit normal. Thus, $dx(\nu)<0$ for any boundary defining function $x$ (that is, $x=0$ and $dx\neq 0$ at $\pa M$, while $x>0$ in $M^\circ$), and we therefore also have
\[
  T^\pm_{\pa M}M = \{ V\in T_{\pa M}M \colon \mp dx(V) > 0 \}.
\]
Define the future/past light cones
\[
  L^\pm_p M = \{ V\in T_p M \colon V\ \text{is future ($+$), resp.\ past ($-$), lightlike} \},
\]
and the light cone
\[
  L_p M = \{ V\in T_p M \colon V\ \text{is lightlike} \} \cup \{ 0 \} \subset T_p M.
\]
As a first step, we show:

\begin{lemma}
\label{LemmaGeoABdy}
  Let $(M,g)$ be a Lorentzian manifold with null-convex timelike boundary $\pa M$ and outward pointing unit normal $\nu$. Let $p\in\pa M$. Then there exists $s_0>0$ such that for all lightlike $V\in L_p M$, $|V|_{g^+}=1$, the following holds for the null-geodesic $\gamma(s):=\wt\exp_p(s V)$:
  \begin{enumerate}
    \item\label{ItGeoABdyOut} If $V\in T^\pm_{\pa M}M$, then $\gamma(s)\in\wt M\setminus M$ for $0<\pm s\leq s_0$.
    \item\label{ItGeoABdyTgt} If $V\in T_p\pa M$ is tangent to $\pa M$, then $\gamma(s)\in\wt M\setminus M^\circ$ for $|s|\leq s_0$.
  \end{enumerate}
\end{lemma}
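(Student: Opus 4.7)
The approach is to track the position of $\gamma$ relative to $\pa M$ by computing the first two $s$-derivatives of a boundary defining function evaluated along $\gamma$, and then apply Taylor's theorem with a compactness argument in $V$.

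First I would pick a boundary defining function $\rho\in\CI(\wt M)$ for $\pa M$, with $\rho>0$ on $M^\circ$, $\rho=0$ on $\pa M$; after rescaling by a smooth positive factor near $\pa M$, we may assume $|\nabla^{\wt g}\rho|_{\wt g}=1$ on $\pa M$, so that $\nabla^{\wt g}\rho|_{\pa M}=-\nu$ and $d\rho(\nu)=-1$. For $V\in L_p M$, set $\gamma_V(s):=\wt\exp_p(sV)$ and $f_V(s):=\rho(\gamma_V(s))$. Because $\gamma_V$ is a $\wt g$-geodesic, $\dot\gamma_V$ is parallel along $\gamma_V$, and hence
\[
  f_V(0)=0,\qquad f_V'(0)=d\rho(V),\qquad f_V''(0)=(\nabla^2\rho)(V,V),
\]
with all quantities depending smoothly on $V$ in a neighborhood of the unit null cone at $p$.

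For part \itref{ItGeoABdyOut}, if $V\in T^\pm_{\pa M}M$ then $\mp f_V'(0)=\mp d\rho(V)>0$, so $f_V$ is strictly monotone at $s=0$ with the appropriate sign, whence $f_V(s)<0$ for $0<\pm s$ small, i.e.\ $\gamma_V(s)\notin M$. For part \itref{ItGeoABdyTgt}, $V\in T_p\pa M$ gives $d\rho(V)=0$, so $f_V'(0)=0$; to evaluate $f_V''(0)$ I would use that $\nabla^{\wt g}\rho=-\nu$ along $\pa M$, so
\[
  f_V''(0)=g(\nabla_V\nabla^{\wt g}\rho,V)=-g(\nabla_V\nu,V)=-II(V,V)\leq 0
\]
by the null-convexity hypothesis. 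In the strictly null-convex case this is $<0$, and Taylor expansion then yields $f_V(s)<0$ for all sufficiently small $s\neq 0$, so $\gamma_V(s)\notin M^\circ$.

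The uniform choice of $s_0$ is obtained by combining both cases on the compact unit null cone $\{V\in L_p M : |V|_{g^+}=1\}$. For $V$ bounded away from $T_p\pa M$, $f_V'(0)$ is bounded away from $0$, and a first-order Taylor argument gives the desired sign of $f_V(s)$ on an interval uniform in $V$. For $V$ near a tangent null direction, $f_V'(0)$ may be small, but by continuity $f_V''(0)$ is close to $-II(V_{\mathrm{tan}},V_{\mathrm{tan}})\leq 0$, so one passes to the second-order Taylor expansion
\[
  f_V(s) = s\,f_V'(0)+\tfrac{s^2}{2}f_V''(0)+O(s^3)
\]
uniformly in $V$ on the compact unit null cone, and checks that for $V$ outward (resp.\ inward) close to tangent, the first-order term still dominates on the respective half-interval, while for $V$ exactly tangent the strictly negative second-order term does. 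A standard covering argument then produces a single $s_0>0$ that works for all $V$.

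The main obstacle is the transitional regime where $V$ is nearly null-tangent: there $f_V'(0)$ degenerates, so one cannot rely on $f_V'(0)$ alone, and matching first- and second-order information uniformly on the unit null cone is the delicate point. (In the merely null-convex, non-strict setting, $f_V''(0)$ may also vanish on some tangent directions, in which case a separate higher-order or approximation argument would be required; for the applications in the sequel, the strictly null-convex case suffices.)
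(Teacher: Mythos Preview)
Your Taylor-expansion approach is clean and works when $\pa M$ is \emph{strictly} null-convex, but the lemma is stated (and used) in the merely null-convex case, and there the argument has a genuine gap which you yourself flag in the last paragraph. If $V\in T_p\pa M$ is a null direction with $II(V,V)=0$, then $f_V(0)=f_V'(0)=f_V''(0)=0$, and the third-order term can have either sign; so the expansion gives no information about the sign of $f_V(s)$, and part~\itref{ItGeoABdyTgt} does not follow. Worse, this also breaks the uniformity in part~\itref{ItGeoABdyOut}: for outward $V$ close to such a tangent direction, both $f_V'(0)$ and $f_V''(0)$ degenerate simultaneously, so neither the first- nor the second-order term controls the $O(s^3)$ remainder on any $s$-interval uniform in $V$. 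Your closing remark that ``the strictly null-convex case suffices'' is not right here: Lemma~\ref{LemmaGeoABdy} is the input for Proposition~\ref{PropGeoANull}, which is the characterization of (non-strict) null-convexity used throughout.

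The paper's proof supplies exactly the missing idea. Rather than Taylor-expanding $x$ along $\gamma$, it perturbs the boundary defining function to $x_\eps:=x+\eps(1-\delta^{-2}Y^2)$ in local coordinates and shows that for small $\delta>0$ the level sets $D_\eps=\{x_\eps=0\}$, $\eps>0$, are \emph{strictly} null-convex, even though $\pa M$ is only null-convex: the added quadratic bump contributes $-2\eps\delta^{-2}\sum dy_i(W)^2$ to the Hessian, which dominates the $O(\eps)$ error coming from the non-strictness of $II$. These $D_\eps$ foliate a one-sided neighborhood of $p$ in $\wt M\setminus M$ and serve as barriers: along any null-geodesic in that region, the function $f:=x/(1-\delta^{-2}Y^2)$ is strictly monotone, so once $\gamma$ leaves $M$ it cannot re-enter while in the coordinate box. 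This yields a uniform $s_0$ for all outward $V$, and part~\itref{ItGeoABdyTgt} then follows by the limiting argument $V_\eps=V+\eps\nu\to V$. The barrier construction is the substantive step your proposal is missing.
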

\begin{proof}
  Pick a boundary defining function $x\in\CI(\wt M)$, so $x^{-1}(0)=\pa M$ and $dx\neq 0$ on $\pa M$, and $x>0$ in $M^\circ$, while $x<0$ in $\wt M\setminus M$. Since the outward pointing unit normal to $\pa M$ is then given by $\nu=-|\nabla x|^{-1}\nabla x$, one computes
  \begin{equation}
  \label{EqGeoABdyII}
    II(V,W) = -|\nabla x|^{-1}(H x)(V,W),\ \ V,W\in T\pa M,
  \end{equation}
  where $H x=\nabla^2 x$ is the Hessian of $x$ with respect to $\wt g$. Therefore, the null-convexity condition is equivalent to $(H x)(V,V)\leq 0$ for all $V\in L\pa M=\bigsqcup_{p\in\pa M}L_p\pa M$.

  Denote by $y_1,\ldots,y_n$ smooth coordinates on a neighborhood $U_\pa\subset\pa M$ of $p$, with $y_j=0$ at $p$ for $1\leq j\leq n$. Using a collar neighborhood of $\pa M$, identify the set $U:=U_\pa\times(-x_0,x_0)_x$ (with $x_0>0$ small) with a neighborhood of $p$ in $\wt M$. We will construct a foliation of a small neighborhood of $p$ intersected with $U\cap\{x<0\}$ by \emph{strictly} null-convex hypersurfaces which will act as barriers for the geodesic $\gamma$, roughly speaking preventing it from crossing $\pa M$ into $M^\circ$ too quickly.

   To construct the foliation, let $\delta\in(0,x_0)$ and define the function
   \[
     x_\eps := x + \eps(1-\delta^{-2}Y^2)\ \ \text{for}\ \  Y:=\Bigl(\sum_{i=1}^n y_i^2\Bigr)^{1/2}<\delta^2,\ 0\leq\eps<\delta.
   \]
   We claim that for $\delta>0$ sufficiently small, the level sets $D_\eps:=x_\eps^{-1}(0)$ are strictly null-convex for $\eps>0$. To see this, note that the conormal $dx_\eps=dx-2\eps\delta^{-2}\sum_{i=1}^n y_i dy_i$ of $D_\eps$ is $\eps\delta^{-2}Y$-close (with respect to $g^+$) to $dx$; furthermore, on $D_\eps$, we have $x\in[-\eps,0]$. Given the bound we are imposing on $Y$, we conclude that null vectors $W\in L_{(x,y_1,\ldots,y_n)}D_\eps$ with $|W|_{g^+}=1$ are $\eps$-close to the boundary light cone $L_{(0,y_1,\ldots,y_n)}\pa M$. Since $\{x=0\}$ is null-convex, this implies that
   \[
     (H x)(W,W) \leq C_1\eps
   \]
   for some constant $C_1$. Furthermore, we have $\sum_{i=1}^n dy_i(W)^2\geq C_2>0$ for such $W$ provided $\delta>0$ is sufficiently small. Therefore,
   \begin{align*}
     (H x_\eps)(W,W) &= (H x)(W,W) - 2\eps\delta^{-2}\sum_{i=1}^n dy_i(W)^2 - 2\eps\delta^{-2}\sum_{i=1}^n y_i(H y_i)(W,W) \\
       &\leq C_1\eps - 2\eps\delta^{-2}C_2 - C_3\eps\delta^{-2}Y \\
       &\leq -C_2\eps\delta^{-2}
   \end{align*}
   for sufficiently small $\delta>0$, proving the strict null-convexity of $D_\eps$. Fixing such a $\delta>0$, define
   \[
     \cB:=\bigl\{-\delta/2<x\leq 0,\ Y<\delta^2\bigr\}\subset U,
   \]
   and consider the function
   \[
     f := \frac{x}{1-\delta^{-2}Y^2}
   \]
   on $\cB$, so $D_\eps=f^{-1}(-\eps)$; since $df\neq 0$ is inward pointing at $p$ (indeed, $df=dx$ there), formula~\eqref{EqGeoABdyII} shows that $(H f)(W,W)<0$ for $0\neq W\in L_p D_\eps$, $\eps>0$. See also Figure~\ref{FigGeoABdy}.

   \begin{figure}[!ht]
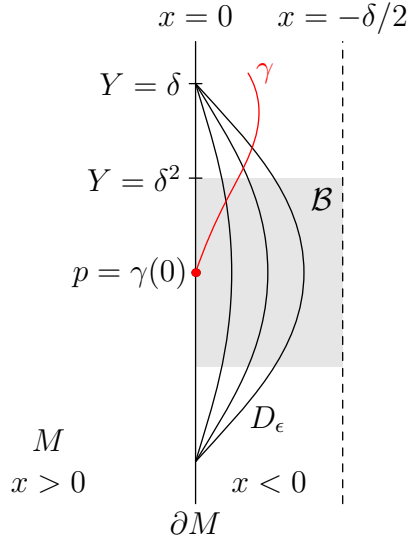

   \centering
     \inclfig{GeoABdy}
     \caption{Foliation of a neighborhood of $p\in\pa M$ in $\wt M\setminus M$ by hypersurfaces $D_\eps$, $\eps>0$, which are \emph{strictly} null-convex. These hypersurfaces are barriers for null-geodesics in the set $\cB$: a null-geodesic $\gamma$ in $\cB$ which emanates from a point in $\pa M$ and has outward pointing initial velocity, that is, $(x\circ\gamma)'(0)<0$, cannot cross $D_\eps$ in the inward direction while in $\cB$.}
   \label{FigGeoABdy}
   \end{figure}
   Consider now $V\in L_p M\cap T^+_{\pa M}M$, $|V|_{g^+}=1$, $\gamma(s)=\wt\exp_p(s V)$. The point of the above construction is that the function $d(s):=f(\gamma(s))$ is negative and strictly decreasing for $s>0$ as long as $\gamma(s)\in\cB$. Indeed, note first that we have $d(0)=0$ and $d'(0)<0$, hence $d(s),d'(s)<0$ for small $s>0$. Suppose now that $d'(s)$ vanishes for some $s>0$ with $\gamma(s)\in\cB$, and let $s'>0$ be the first zero of $d'(s')=0$. Then, letting $\eps:=d(s')$, we have $\gamma'(s')\in L_{\gamma(s')}D_\eps$. The strict null-convexity of $D_\eps$ forces $d''(s')<0$, so $d'(s)$ is strictly decreasing near $s'$; since $d'(s)<0$ for $s<s'$, this contradicts the assumption that $d'(s')=0$.

   Therefore, we have
   \[
     s_0:=\inf_{\genfrac{}{}{0pt}{}{V\in L_p M\cap T^+_{\pa M}M}{|V|_{g^+}=1}}\sup\{s>0\colon \wt\exp_p((0,s]V)\subset\cB^\circ\}>0,
   \]
   where $\cB^\circ$ denotes the interior of $\cB$. (In fact, our arguments show $s_0\gtrsim\delta^2$.) The conclusion of part~\itref{ItGeoABdyOut} then holds for this value of $s_0$.

   Part~\itref{ItGeoABdyTgt} follows from part~\itref{ItGeoABdyOut} by a simple limiting argument: let $V_\eps:=V+\eps\nu$, $|\eps|<1$, which is outward pointing for $\eps>0$ and inward pointing for $\eps<0$. By part~\itref{ItGeoABdyOut}, there exists $s_0>0$ such that $\wt\exp_p(s V_\eps)\in\wt M\setminus M$ for $0<(\sgn\eps)s\leq s_0$. Letting $\eps\to 0$, this implies $\gamma(s)\in\ol{\wt M\setminus M}=\wt M\setminus M^\circ$ for $0\leq|s|\leq s_0$, as claimed.
\end{proof}

We can now give a useful equivalent formulation of the null-convexity condition.

\begin{prop}
\label{PropGeoANull}
  Let $(M,g)$ be a Lorentzian manifold with timelike boundary $\pa M$ and outward pointing unit normal $\nu$. Then the following are equivalent:
  \begin{enumerate}
    \item\label{ItGeoANullConvex} $\pa M$ is null-convex, i.e.\ the inequality~\eqref{EqIntroAdmNull} holds.
    \item\label{ItGeoANullOutward} If $\gamma\colon(-\eps,0]\to M$ is a null-geodesic segment with $\gamma(0)\in\pa M$ and $\gamma(s)\in M^\circ$ for $s\leq 0$, then $\gamma'(0)\in T^+_{\pa M}M$. Likewise, if $\gamma\colon[0,\eps)\to M$ is a null-geodesic segment with $\gamma(0)\in\pa M$ and $\gamma(s)\in M^\circ$ for $s>0$, then $\gamma'(0)\in T^-_{\pa M}M$.
  \end{enumerate}
\end{prop}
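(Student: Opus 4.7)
The plan is to prove the equivalence by showing $(1)\Rightarrow(2)$ directly via a case analysis on the sign of $dx(\gamma'(0))$ for a boundary defining function $x$, and $(2)\Rightarrow(1)$ by contrapositive, using a second-order Taylor expansion along the null-geodesic through a putative witness vector. Throughout, I will work with the isometric embedding $(M,g)\hookrightarrow(\wt M,\wt g)$ from Lemma~\ref{LemmaGeoAEmbed} and a boundary defining function $x\in\CI(\wt M)$ with $M^\circ=\{x>0\}$, $\pa M=\{x=0\}$, $\wt M\setminus M=\{x<0\}$, so that $T^\pm_{\pa M}M=\{\mp dx(V)>0\}$ and, by the computation in the proof of Lemma~\ref{LemmaGeoABdy}, $II(V,V)=-|\nabla x|^{-1}(Hx)(V,V)$ for $V\in T\pa M$.

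For $(1)\Rightarrow(2)$, take a null-geodesic $\gamma\colon(-\eps,0]\to M$ with $\gamma(0)=p\in\pa M$ and $\gamma(s)\in M^\circ$ for $s<0$. I will rule out the alternatives to $\gamma'(0)\in T^+_{\pa M}M$. If $\gamma'(0)\in T^-_{\pa M}M$ is inward pointing, then $dx(\gamma'(0))>0$, so $(x\circ\gamma)(s)=s\,dx(\gamma'(0))+O(s^2)<0$ for small $s<0$, contradicting $\gamma(s)\in M^\circ$. If instead $\gamma'(0)\in T_p\pa M$ is tangent to the boundary, then part~\itref{ItGeoABdyTgt} of Lemma~\ref{LemmaGeoABdy} (applied after rescaling $\gamma'(0)$ to unit $g^+$-norm) gives $\gamma(s)\in\wt M\setminus M^\circ$ for all small $|s|$, again contradicting $\gamma(s)\in M^\circ$ for $s<0$. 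The case of $\gamma\colon[0,\eps)\to M$ is entirely analogous, with the roles of inward/outward swapped.

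For $(2)\Rightarrow(1)$, suppose $(1)$ fails: there exist $p\in\pa M$ and a null vector $V\in T_p\pa M$ with $II(V,V)<0$, equivalently $(Hx)(V,V)>0$. Form the null-geodesic $\gamma(s):=\wt\exp_p(sV)$ in $\wt M$. Since $\gamma$ is a geodesic and $V$ is tangent to $\pa M$, one has $(x\circ\gamma)(0)=0$, $(x\circ\gamma)'(0)=dx(V)=0$, and $(x\circ\gamma)''(0)=(Hx)(V,V)>0$, whence
\[
  (x\circ\gamma)(s)=\tfrac{1}{2}(Hx)(V,V)\,s^2+O(s^3)>0
\]
for all sufficiently small $s\neq 0$. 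Thus $\gamma|_{(-\eps,0]}$ is a null-geodesic segment in $M$ with $\gamma(0)\in\pa M$, $\gamma(s)\in M^\circ$ for $s<0$, but $\gamma'(0)=V$ is tangent and hence not in $T^+_{\pa M}M$, contradicting $(2)$.

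The only subtle step is the tangential case of $(1)\Rightarrow(2)$: here the first-order expansion is trivial and the second-order term $(Hx)(\gamma'(0),\gamma'(0))$ is merely $\leq 0$, so one cannot conclude by a naive Taylor argument that $\gamma$ must exit $M$. This is precisely where the strictly null-convex barrier hypersurfaces $D_\eps$ built in Lemma~\ref{LemmaGeoABdy} do the work, forcing $\gamma$ to remain in $\wt M\setminus M^\circ$ on both sides of $s=0$ regardless of higher-order behavior. The rest of the proof is bookkeeping.
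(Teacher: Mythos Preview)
Your proof is correct and follows essentially the same route as the paper's: for $(1)\Rightarrow(2)$ you rule out the inward-pointing case by a first-order Taylor argument (which the paper compresses to ``the conclusion $\gamma'(0)\in T^+_{\pa M}M\cup T\pa M\setminus\{0\}$ is clear'') and then invoke Lemma~\ref{LemmaGeoABdy}\itref{ItGeoABdyTgt} to eliminate the tangential case; for $(2)\Rightarrow(1)$ you run the same second-order expansion of $x\circ\gamma$ as the paper, the only cosmetic difference being that you contradict the first clause of \itref{ItGeoANullOutward} via $\gamma|_{(-\eps,0]}$ while the paper contradicts the second via $\gamma|_{[0,\eps)}$.
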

\begin{proof}
  \ul{\itref{ItGeoANullConvex} $\Lra$ \itref{ItGeoANullOutward}}: for a null-geodesic $\gamma\colon(-\eps,0]\to M$ as in \itref{ItGeoANullOutward}, the conclusion $\gamma'(0)\in T^+_{\pa M}M\cup T\pa M\setminus\{0\}$ is clear. But by Lemma~\ref{LemmaGeoABdy}, which uses condition~\itref{ItGeoANullConvex}, $\gamma'(0)\in T\pa M$ would imply that $\gamma(s)\in\wt M\setminus M^\circ$ for small $s$. Hence $\gamma'(0)\not\in T\pa M$.

  \ul{\itref{ItGeoANullOutward} $\Lra$ \itref{ItGeoANullConvex}}: suppose that condition~\itref{ItGeoANullConvex} is \emph{violated}, hence there exists $V\in L_p\pa M$, $p\in\pa M$, with $II(V,V)<0$, in particular $V\neq 0$. Define $\gamma(s)=\exp_p(s V)$ for $s\in[0,\eps)$, $\eps>0$ small, and let $f=x\circ\gamma\colon[0,\eps)\to\R$, with $x$ a boundary defining function as in the proof of Lemma~\ref{LemmaGeoABdy}. Then
  \[
    f(0)=0,\ \ f'(0)=0,\ \ f''(0)=(H x)(V,V)>0.
  \]
  Therefore, $\gamma(s)\in M^\circ$ for $s\in(0,s_1)$ for sufficiently small $s_1\in(0,\eps)$. Since $\gamma'(0)=V\in T\pa M$, this contradicts condition~\itref{ItGeoANullOutward}.
\end{proof}

We end this section with a geometric lemma linking boundary light observation sets with spacetime light cones on an infinitesimal level. We denote by $\rho(V)$, $V\in T_p M$, $p\in\pa M$, the reflection of $V$ across $\pa M$, that is,
\begin{equation}
\label{EqGeoARefl}
  \rho(V) := V - 2 g(V,\nu)\nu,
\end{equation}
with $\nu$ the outward pointing unit normal. One easily checks $\rho\colon L M\to L M$. Moreover, if $T\in T\pa M$, then $g(\rho(V),T)=g(V,T)$; this in particular applies to future timelike $T$, hence $\rho\colon L^\pm M\to L^\pm M$ preserves the time orientation of lightlike vectors.

\begin{lemma}
\label{LemmaGeoANull}
  Suppose $(M,g)$ is a time-oriented manifold with timelike boundary $\pa M$. Let $p\in\pa M$. Then there exists an isomorphism $\phi$ between the space $\sS$ of linear spacelike hypersurfaces $S\subset T_p\pa M$ and the space $\sV$ of rays $\R_+ V\subset T_p M$ along future-directed outward pointing null vectors, given by mapping $S\in\sS$ to the unique future-directed outward pointing null ray $\phi(S)$ contained in $S^\perp$. The inverse map is given by $\sV\ni\R_+ V\mapsto T_p\pa M\cap V^\perp\in\sS$.

  Moreover, there exists an isomorphism between $\sS$ and the space $\sN$ of linear null hypersurfaces $N\subset T_p M$ which contain a future-directed outward pointing null vector, given by $\sS\ni S\mapsto S\oplus\mathspan \phi(S)\in\sN$.
\end{lemma}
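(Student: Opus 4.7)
My plan is to verify both parts of the lemma by a dimension count together with direct linear-algebraic arguments, exhibiting an explicit inverse map in each case and checking mutual invertibility.

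\textbf{First isomorphism.} For $S\in\sS$, the orthogonal complement $S^\perp\subset T_pM$ has dimension $(n+1)-(n-1)=2$. Since $S$ is a spacelike hyperplane in the Lorentzian space $T_p\pa M$, the $g_\pa$-orthogonal complement $L:=S^\perp\cap T_p\pa M$ is a $1$-dimensional timelike line; combined with $\nu$ (which is spacelike and automatically orthogonal to $S\subset T_p\pa M$), this gives the decomposition $S^\perp=L\oplus\R\nu$, exhibiting $S^\perp$ as a $2$-dimensional Lorentzian subspace of signature $(-,+)$. Such a space contains exactly two null lines; choosing a future timelike unit vector $\tau\in L$, they are $\R(\tau+\nu)$ and $\R(\tau-\nu)$. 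The first is future-directed (since $g(\tau+\nu,\tau)=-1<0$) and outward-pointing (since $g(\tau+\nu,\nu)=1>0$), while the second is future but inward, which uniquely pins down $\phi(S)\in\sV$.

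For the candidate inverse $\sV\to\sS$, $V\mapsto V^\perp\cap T_p\pa M$: since $V$ is outward we have $g(V,\nu)>0$ and hence $V\notin T_p\pa M$, so the intersection has dimension $n-1$. It contains no nonzero null vector $W$, because $W$ would be null and orthogonal to the null $V$, hence parallel to $V$ (two orthogonal null vectors in a Lorentzian space are parallel), contradicting $V\notin T_p\pa M$; and no timelike vector $W$, because then $V\in W^\perp$, but $W^\perp$ is spacelike and cannot contain a nonzero null vector. Thus $V^\perp\cap T_p\pa M\in\sS$. The composition $S\mapsto\phi(S)\mapsto\phi(S)^\perp\cap T_p\pa M$ returns $S$: the inclusion $S\subset\phi(S)^\perp\cap T_p\pa M$ is immediate by definition of $\phi(S)$, and both sides have dimension $n-1$. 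The reverse composition is similar.

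\textbf{Second isomorphism.} For $S\in\sS$ set $N:=S\oplus\mathspan\phi(S)$, which has dimension $n$. With $V$ a representative of $\phi(S)$, every $W\in N$ satisfies $g(W,V)=0$ (since $V\perp S$ by construction, and $V\perp V$ as $V$ is null), so $V\in N\cap N^\perp$; hence $g|_N$ is degenerate and $N$ is a null hyperplane containing the future outward null vector $V$, i.e.\ $N\in\sN$. Conversely, a null hyperplane $N\in\sN$ has $1$-dimensional radical $N\cap N^\perp$, spanned by any null vector it contains; for a future outward null $V\in N$ this forces $\mathspan V=N\cap N^\perp$, so such $V$ is unique up to positive scaling. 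Defining $S:=N\cap T_p\pa M$, we have $\dim S=n-1$ (since $N\neq T_p\pa M$, the latter being Lorentzian while $N$ is null), and $S$ is spacelike because writing any $W\in N$ as $W=cV+s$ with $s\in S$ gives $g(W,W)=g(s,s)\geq 0$ (so $N$ has no timelike vectors) while any null vector in $N\cap T_p\pa M$ must lie in $\mathspan V\cap T_p\pa M=\{0\}$. The maps $S\mapsto S\oplus\mathspan\phi(S)$ and $N\mapsto N\cap T_p\pa M$ are then mutually inverse by direct verification.

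\textbf{Main obstacle.} I do not expect any substantive obstacle; the argument is a compact exercise in Lorentzian linear algebra. The only point requiring a moment of care is identifying $S^\perp$ as a $2$-dimensional Lorentzian subspace, which is what produces exactly one future outward null ray and underwrites the whole construction; from there I use repeatedly the two facts that in Lorentzian signature two orthogonal null vectors must be parallel, and the orthogonal to a timelike vector is spacelike.
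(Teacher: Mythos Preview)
Your proof is correct and takes essentially the same linear-algebraic route as the paper's: both identify $S^\perp$ as a $2$-plane of Lorentzian signature, select the unique future outward null ray, and then verify the inverse map and the null-hyperplane correspondence by dimension count. The only tactical differences are that the paper enumerates the four null rays in $S^\perp$ via the reflection map $\rho$ (rather than your explicit $\tau\pm\nu$) and shows $V^\perp\cap T_p\pa M$ is spacelike via the isometric isomorphism with the quotient $V^\perp/\R V$; one small wording point in your second part is that the clause ``$g(W,W)=g(s,s)\geq 0$ (so $N$ has no timelike vectors)'' reads circularly---the clean order is to first note that $N=V^\perp$ contains no timelike vector (else $V$ would lie in the spacelike hyperplane $W^\perp$), and then infer $g(s,s)\geq 0$ for $s\in S\subset N$.
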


See Figure~\ref{FigGeoANull}.

\begin{figure}[!ht]
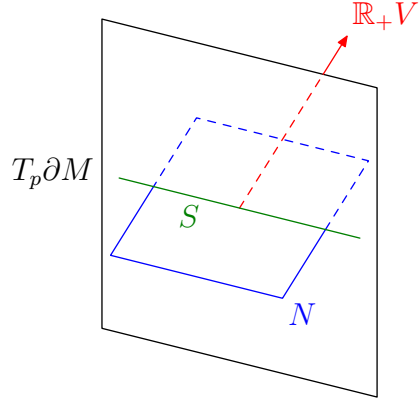

\centering
  \inclfig{GeoANull}
  \caption{Illustration of Lemma~\ref{LemmaGeoANull}; everything takes place in $T_p M$. $N$ is a null hypersurface containing an outward pointing null vector $V$, while $S$ is a spacelike hypersurface in $T_p\pa M$. We have $\R_+V=\phi(S)$, $N^\perp=\R V$, and $S=N\cap T_p\pa M$.}
\label{FigGeoANull}
\end{figure}

\begin{proof}[Proof of Lemma~\ref{LemmaGeoANull}]
  Given a spacelike hypersurface $S\subset T_p\pa M$, the orthocomplement $S^\perp$ is a time-oriented 2-dimensional vector space with signature $(1,1)$, hence there exists a non-zero null vector $W\in S^\perp$; the four distinct rays of null vectors contained in $S^\perp$ are then the positive scalar multiples of $W$, $-W$, $\rho(W)$, $-\rho(W)$. Since multiplication by $-1$ exchanges future- and past-directed null as well as outward and inward pointing vectors, and since application of $\rho$ exchanges outward and inward pointing vectors but preserves the time orientation, exactly one of these four vectors, which we call $V$, is future-directed and outward pointing; and $\phi^+(S)=\R_+ V$.

  On the other hand, if $0\neq V\in T_p M$ is null (thus $V^\perp/\R V$ is spacelike) and outward pointing, in particular $V\not\in T_p\pa M$, then the composition $V^\perp\cap T_p\pa M\hra V^\perp\to V^\perp/\R V$ is an isometric isomorphism, hence $S:=V^\perp\cap T_p\pa M$ is a spacelike hypersurface. This establishes the isomorphism $\sS\cong\sV^+$ (as smooth manifolds).

  For the last claim, we note that $\sN^+\ni N\mapsto N^\perp\cap L^+_p M\in\sV^+$ maps a null hypersurface $N$ into the unique ray along a future-directed outward pointing null generator of $N$. The inverse of this map is given by $\sV^+\ni\R_+ V\mapsto V^\perp=V\oplus(V^\perp\cap T_p\pa M)\in\sN^+$. Composition of these maps with $\phi^+$ gives the desired isomorphism $\sS\xra{\cong}\sN^+$. The inverse of this isomorphism is given by $N\mapsto N\cap T_p\pa M$.
\end{proof}

\subsection{Examples of admissible manifolds}
\label{SubsecGEx}

Small perturbations of admissible Lo\-rentz\-ian manifolds with \emph{strictly} null-convex boundaries are admissible:

\begin{lemma}
\label{LemmaGExOp}
  Suppose $(M,g)$ is admissible and strictly null-convex, with an embedding $(M,g)\hra(\wt M,\wt g)$ as in Lemma~\ref{LemmaGeoAEmbed}. Let $K\Subset\wt M$, and define $\cC^k$ spaces using the Riemannian metric $g^+$ on $\wt M$.
  \begin{enumerate}
    \item Let $x\in\CI(\wt M)$ denote a defining function of $\pa M$. If $x'\in\CI(\wt M)$ is equal to $x$ outside of $K$ and sufficiently close in $\cC^2$ to $x$ in $K$, then $M':=\{x'\geq 0\}$ is admissible and strictly null-convex.
    \item If $g'$ is a smooth Lorentzian metric on $M$, equal to $g$ on $M\setminus K$ and sufficiently close in $\cC^1(K)$ to $g$, then $(M,g')$ is admissible.
  \end{enumerate}
\end{lemma}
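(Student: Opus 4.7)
Both parts are instances of the general principle that each clause defining admissibility, and in fact strict null-convexity, is a \emph{stable} condition under the indicated perturbations. Throughout, I use the embedding $(M,g)\hra(\wt M,\wt g)$ from Lemma~\ref{LemmaGeoAEmbed} and the fixed Riemannian metric $g^+$ on $\wt M$; since the deformations take place in the compact set $K\Subset\wt M$, unit cones over $K$ are compact, allowing all ``open condition'' arguments to be upgraded to uniform quantitative bounds.

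\textbf{Part (1).} The ambient metric $g$ and the time function $t$ are unchanged, and $M'\setminus K=M\setminus K$. Hence $dt$ remains everywhere timelike, and $t\colon M'\to\R$ is still proper (level sets $t^{-1}(s)\cap M'$ are closed subsets of the compact sets $t^{-1}(s)$) and surjective (for $s\notin t(K)$ we have $t^{-1}(s)\cap M'=t^{-1}(s)$, while for $s\in t(K)$ surjectivity follows by openness of the property ``$t^{-1}(s)\cap\{x'\geq 0\}\neq\emptyset$'' under $\cC^0$-perturbations of $x$, using that $t^{-1}(s)\cap M^\circ$ is open and non-empty in $t^{-1}(s)$ since $\pa M$ is timelike). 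The boundary $\pa M'=(x')^{-1}(0)$ is a smooth hypersurface whose tangent space depends only on $dx'$, hence is $\cC^0$-close to $T\pa M$; since timelike-ness of the induced metric is an open condition, $\pa M'$ is timelike. Finally, as in the proof of Lemma~\ref{LemmaGeoABdy}, the second fundamental form of $\pa M'$ is
\[
  II'(V,V) = -|\nabla x'|^{-1}(H x')(V,V),
\]
where $H$ is the Hessian with respect to $g$. Strict null-convexity of $(M,g)$ gives a uniform bound $(H x)(V,V)\leq -c_0<0$ on the compact set of $g^+$-unit $g$-null vectors tangent to $\pa M$ over $\pa M\cap K$; since $H x'$ is $\cC^0$-close to $H x$ and the $g$-null cone over $\pa M'$ is close to that over $\pa M$, the analogous bound, with a slightly smaller constant, carries over to $\pa M'$ for $x'$ sufficiently $\cC^2$-close to $x$.

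\textbf{Part (2).} Now $M$ and $t$ are unchanged, and $g'=g$ off $K$. Timelike-ness of $dt$ and of the induced metric on $\pa M$ are pointwise open conditions on $g$, vacuous outside $K$; by compactness of $K\cap M$ and $K\cap\pa M$, they persist under $\cC^0$-small perturbations of $g$. The subtler point is null-convexity, since the second fundamental form
\[
  II'(V,V) = g'(\nabla'_V\nu',V)
\]
involves the Christoffel symbols of $g'$, and thus one derivative of $g'$; this is exactly why we demand $\cC^1$-closeness. For $g'$ sufficiently $\cC^1$-close to $g$ on $K$, the outward $g'$-unit normal $\nu'$ and the Christoffel symbols of $g'$ are $\cC^0$-close to their $g$-counterparts, and so $II'$ is $\cC^0$-close to $II$ on $\pa M\cap K$. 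Moreover, every $g^+$-unit $g'$-null vector is $\cC^0$-close to some $g^+$-unit $g$-null vector at the same base point. Combining these two approximations with the uniform lower bound $II(W,W)\geq c_0>0$ furnished by strict null-convexity of $(M,g)$ on $\pa M\cap K$ yields $II'(V,V)>0$ for all $g^+$-unit $g'$-null $V\in T(\pa M\cap K)$, and hence null-convexity (in fact strict null-convexity) of $(M,g')$.

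\textbf{Main difficulty.} The only step that genuinely uses the strength of the respective perturbation hypothesis is the preservation of (strict) null-convexity: the second fundamental form depends on one derivative of either the defining function or the metric, forcing us to assume $\cC^2$-closeness in~(1) and $\cC^1$-closeness in~(2). Once this is in place, compactness of the relevant unit null cones over $K$ turns the strict inequality in~\eqref{EqIntroAdmNull} into a uniform quantitative estimate, and the remaining admissibility conditions are handled by standard open-condition arguments.
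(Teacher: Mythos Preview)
Your proof is correct and follows the same approach as the paper's, which simply observes that strict null-convexity depends on at most two derivatives of the boundary defining function and one derivative of the metric (via the Hessian formula~\eqref{EqGeoABdyII}), so all conditions are open under the stated perturbations. You have filled in considerably more detail than the paper's one-line argument, but the underlying idea is identical.
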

\begin{proof}
  This follows from the observation that the assumption of strict null-convexity involves up to first derivatives of the metric and up to second derivatives of the boundary defining function, see \eqref{EqGeoABdyII}.
\end{proof}

If more is known about the global structure of $(M,g)$, one can allow non-compact perturbations as well. For example, the cylinder
\begin{equation}
\label{EqGExM0}
  M_0 := \{ (t,x) \in \R\times\R^n \colon |x|<R \}, \ \ R>0,
\end{equation}
with the Minkowski metric $g=-dt^2+dx^2$, is admissible with strictly null-convex boundary; indeed,
\[
  II(V,V) = R^{-1}dx(V)^2
\]
is strictly positive for non-zero null vectors $V\in T\pa M$. If now $f\colon\R^{1+n}\to\R$ has small $\cC^2$ norm, then
\begin{equation}
\label{EqGExMf}
  M_f := \{ (t,r\omega) \in \R\times\R^n \colon r<(1+f(t,\omega))R \}
\end{equation}
is admissible, with strictly null-convex boundary; see Figure~\ref{FigGExAdm}.

\begin{figure}[!ht]
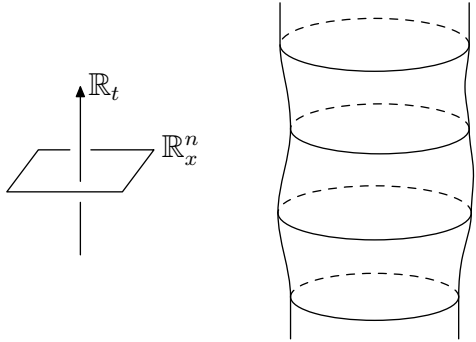

\centering
  \inclfig{GExAdm}
  \caption{An admissible manifold $(M_f,g)\hra(\R^{1+n}_{t,x},-dt^2+dx^2)$, with $f$ having small $\cC^2$ norm.}
\label{FigGExAdm}
\end{figure}

Another interesting class of examples, which includes the cylinder~\eqref{EqGExM0}, is obtained as follows: let $(X,h)$ be a compact Riemannian manifold with convex boundary, so $II(V,V)=h(\nabla_V \nu,V)\geq 0$ for all $V\in T\pa X$, where $\nu$ is the outward pointing unit normal. (We thus allow for the possibility that parts of the boundary are totally geodesic.) Then the product manifold $M:=\R_t\times X$, $g=-dt^2+h$, is admissible, with $\pa M$ strictly null-convex if and only if $\pa X$ is strictly convex. See Figure~\ref{FigGExProd}; another example is (partially) shown in Figure~\ref{FigRSTrans}.

\begin{figure}[!ht]
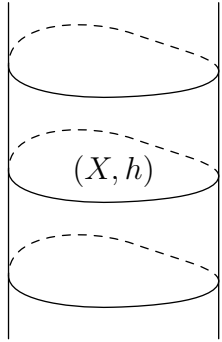

\centering
  \inclfig{GExProd}
  \caption{An admissible manifold, obtained as the product of the real line with a Riemannian manifold $X$ with convex boundary. We can allow parts of $\pa X$ to be flat.}
\label{FigGExProd}
\end{figure}

\subsection{Broken null-geodesics}
\label{SubsecGeoG}

Throughout this section, $(M,g)$ will be a fixed admissible Lorentzian manifold. Motivated by the fact that singularities of solutions of wave equations on $(M,g)$ propagate along null-geodesics in $M^\circ$ and undergo reflection according to Snell's law at the boundary $\pa M$, see Taylor~\cite{TaylorReflection}, we rigorously define and study such \emph{broken null-geodesics} in this section. Define the open submanifold
\begin{equation}
\label{EqGeoGRedLM}
  \Lightb M := L M \setminus ( T^+_{\pa M}M \cup T\pa M )
\end{equation}
of $L M$, so $(p,V)\in\Lightb M$ if and only if $V\in L_p M$ and $V\in T^-_{\pa M}M$ when $p\in\pa M$. We then introduce:

\begin{definition}
\label{DefGeoG}
  Let $(p,V)\in\Lightb M$. We call a piecewise smooth curve $\gamma\colon I\to M$, with $0\in I\subset\R$ an open connected interval, $\gamma(0)=p$, $\gamma'(0)=V$, a \emph{broken null-geodesic}, if
  \begin{enumerate}
  \item for all open intervals $J\subset I$ with $\gamma(J)\cap\pa M=\emptyset$, $\gamma|_J$ is an affinely parameterized null-geodesic in $(M^\circ,g)$;
  \item if $s\in I$, $\gamma(s)\in\pa M$, then for small $\eps>0$, $\gamma|_{(s-\eps,s]}$ and $\gamma|_{[s,s+\eps)}$ are null-geodesics with $\gamma(s\pm(0,\eps))\subset M^\circ$, and $\gamma'(s+0)=\rho(\gamma'(s-0))$, where $\rho$ is the reflection map~\eqref{EqGeoARefl}.
  \end{enumerate}
\end{definition}

Thus, broken null-geodesics are null-geodesics which undergo reflection at the boundary $\pa M$ preserving their velocity tangent to $\pa M$; see Figure~\ref{FigGeoGBroken}.

\begin{figure}[!ht]
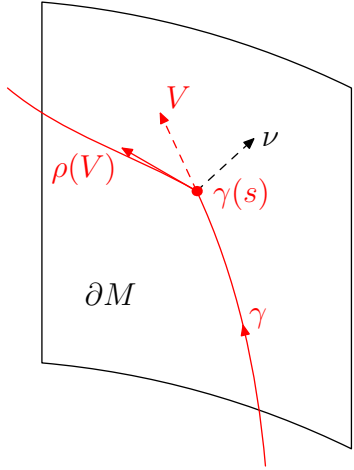

\centering
  \inclfig{GeoGBroken}
  \caption{A broken null-geodesic undergoing a reflection at $\gamma(s)\in\pa M$. Here, $V=\gamma'(s)$.}
\label{FigGeoGBroken}
\end{figure}

A broken null-geodesic with $(\gamma(0),\gamma'(0))=(p,V)$ as in this definition always exists on sufficiently small intervals $I=(-\eps,\eps)$, $\eps>0$: when $p\in M^\circ$, $\gamma|_I$ is an interior null-geodesic, while for $p\in\pa M$, one takes $\gamma(s)=\exp_p(s V)$ for $s\geq 0$ and $\gamma(s)=\exp_p(s\rho(V))$ for $s\leq 0$. Also note that if $\gamma_j\colon I_j\to M$, $j\in J$, $0\in I_j$, are broken null-geodesics which all have the same initial condition, then the prescription $\gamma|_{I_j}=\gamma_j$ defines a broken null-geodesic $\gamma\colon \bigcup_{j\in J}I_j\to M$. Thus, for $(p,V)$ as in the above definition, there always exists an \emph{inextendible broken null-geodesic} with initial position $p$ and speed $V$.

\begin{definition}
\label{DefGeoGExp}
  For $(p,V)\in\Lightb M$, let $\gamma\colon I\to M$ denote the unique inextendible broken null-geodesic with $(\gamma(0),\gamma'(0))=(p,V)$. Suppose $1\in I$. We then define the \emph{broken exponential map} by $\expb_p(V):=\gamma(1)$. Denote the domain of definition of $\expb$ by $\cD\subset\Lightb M$.
\end{definition}

Thus, for $(p,V)$ with $\gamma((0,1))\subset M^\circ$, we simply have $\expb_p(V)=\exp_p(V)$. We proceed to analyze the properties of inextendible broken null-geodesics. For convenience, we make our choice of the Riemannian metric $g^+$ on $\wt M$ more specific by demanding
\begin{equation}
\label{EqGeoGReflIso}
  g^+(\rho(V),\rho(V))=g^+(V,V),\ \ V\in T_p M,\ p\in\pa M.
\end{equation}
This is easily arranged by taking any Riemannian metric $g^+_0$ on $T_{\pa M}M$, then letting $g^+_1(V,W):=g^+_0(V,W)+g^+_0(\rho(V),\rho(W))$ for $V,W\in T_{\pa M}M$, and finally taking $g^+$ to be a Riemannian metric on $\wt M$ extending $g^+_1$ smoothly to the rest of $\wt M$. The consequence of \eqref{EqGeoGReflIso} is that the $g^+$-length of the tangent vector of a broken null-geodesic $\gamma$ is continuous when $\gamma$ hits the boundary.

\begin{prop}
\label{PropGeoGMax}
  Let $\gamma\colon I\to M$ be a broken null-geodesic with $\gamma'(0)\in L^+_p M$, and let $I_+:=\sup I\in\R\cup\{\infty\}$. Then $\gamma$ is future inextendible\footnote{By this we mean that the parameter interval for which the maximal broken null-geodesic with the same initial data as $\gamma$ is defined has supremum equal to $\sup I$.} if and only if one of the following happens:
  \begin{enumerate}
  \item\label{ItGeoGMaxT} $t(\gamma(s))\to\infty$ as $s\to I_+$.
  \item\label{ItGeoGMaxL} $I_+<\infty$, $t(\gamma(s))\to t_\infty<\infty$ as $s\to I_+$, and $I_+\in\ol{\gamma^{-1}(\pa M)}$.
  \end{enumerate}
  There exists an analogous characterization of past inextendibility.
\end{prop}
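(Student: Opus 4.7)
Since $dt$ is past timelike and $\gamma'$ is future null---and the reflection map $\rho$ preserves the future-directedness of null vectors, as noted after~\eqref{EqGeoARefl}---the derivative $\frac{d}{ds}(t\circ\gamma)(s)=dt(\gamma'(s))$ is strictly positive wherever defined. Hence $t\circ\gamma$ is strictly increasing and admits a limit $t_\infty\in(t(p),\infty]$ as $s\to I_+$. Moreover, on any compact set $K\subset\wt M$ one has a uniform bound of the form $dt(V)\geq c_K|V|_{g^+}$ for future null $V\in TK$, which converts finite $t$-ranges into finite $g^+$-arclengths.

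The implication ``\itref{ItGeoGMaxT} or \itref{ItGeoGMaxL} implies future inextendibility'' is essentially immediate. Under~\itref{ItGeoGMaxT}, any extension of $\gamma$ to $s=I_+$ would yield $t(\gamma(I_+))<\infty$ by continuity, contradicting $t(\gamma(s))\to\infty$. Under~\itref{ItGeoGMaxL}, the existence of a sequence $\tau_k\nearrow I_+$ with $\gamma(\tau_k)\in\pa M$ would contradict the piecewise-smoothness clause of Definition~\ref{DefGeoG}, which requires reflection parameters to be isolated on any compact subinterval of the parameter domain---so no extension exists.

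For the converse I would argue by contrapositive: assuming neither~\itref{ItGeoGMaxT} nor~\itref{ItGeoGMaxL} holds, I construct an extension. Failure of~\itref{ItGeoGMaxT} gives $t_\infty<\infty$, so by properness of $t$ the curve $\gamma$ is confined to the compact slab $K:=t^{-1}([t(p),t_\infty])$. Failure of~\itref{ItGeoGMaxL} supplies $\delta>0$ with $\gamma\bigl((I_+-\delta,I_+)\bigr)\cap\pa M=\emptyset$, so the final segment $\gamma|_{(I_+-\delta,I_+)}$ is a single smooth affinely parameterized null-geodesic in $M^\circ$. A Gronwall estimate for the geodesic ODE on $K$ over this bounded parameter interval bounds $|\gamma'|_{g^+}$ both above and below; combined with the integrated monotonicity identity $t_\infty-t(\gamma(I_+-\delta))=\int_{I_+-\delta}^{I_+}dt(\gamma'(r))\,dr$ and the lower bound $dt(\gamma')\geq c_K|\gamma'|_{g^+}$, this forces $I_+<\infty$. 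By ODE continuity the limit $(q,V):=\lim_{s\to I_+}(\gamma(s),\gamma'(s))$ exists in $TM$, with $V\in L^+_q M\setminus\{0\}$.

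If $q\in M^\circ$, standard ODE theory extends $\gamma$ past $I_+$ as an interior geodesic. If $q\in\pa M$, the decisive input is Proposition~\ref{PropGeoANull}: since $\gamma$ approaches $q$ from $M^\circ$, the null-convexity hypothesis rules out the tangential case $V\in T_q\pa M$, forcing $V\in T^+_{\pa M}M$. Consequently $\rho(V)$ is inward pointing, and the prescription $\gamma(s):=\wt\exp_q\bigl((s-I_+)\rho(V)\bigr)$ for small $s-I_+>0$ glues with $\gamma|_{[0,I_+)}$ to produce a broken null-geodesic on a strictly larger parameter interval---contradicting inextendibility. The past-inextendibility characterization is obtained by applying the same argument to the time-reversed curve. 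The principal obstacle is excluding the tangential case $V\in T_q\pa M$ at a boundary limit, which would obstruct a meaningful reflection; this is precisely what Proposition~\ref{PropGeoANull}, and hence the null-convexity hypothesis, provides.
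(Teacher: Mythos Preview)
Your overall strategy matches the paper's, but there is a genuine gap in the contrapositive direction. You assert that ``failure of~\itref{ItGeoGMaxL} supplies $\delta>0$ with $\gamma\bigl((I_+-\delta,I_+)\bigr)\cap\pa M=\emptyset$.'' This is not justified: condition~\itref{ItGeoGMaxL} is the conjunction of three clauses, one of which is $I_+<\infty$. So when $t_\infty<\infty$, the negation of~\itref{ItGeoGMaxL} only tells you that \emph{either} $I_+=\infty$ \emph{or} $I_+\notin\ol{\gamma^{-1}(\pa M)}$. In the first alternative you have not excluded the scenario in which $\gamma$ undergoes infinitely many reflections at parameters $b_j\to\infty$ while $t\circ\gamma$ remains bounded; in that scenario there is no reflection-free terminal segment at all, and your subsequent argument does not apply. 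The sentence ``this forces $I_+<\infty$'' is then circular, since your Gronwall bound was invoked ``over this bounded parameter interval,'' which presupposes what you are trying to prove.

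What is actually needed---and what the paper proves as the implication~\eqref{EqGeoGMaxEscape}---is the a priori statement that $I_+=\infty$ forces $t(\gamma(s))\to\infty$. The paper obtains this by running the differential inequality $\tfrac{d}{ds}(1/\ell)\leq C$ for $\ell(s)=|\gamma'(s)|_{g^+}$ \emph{globally across reflections}, which requires the reflection-invariance~\eqref{EqGeoGReflIso} of $g^+$ so that $\ell$ is continuous at reflection times. This yields $\ell(s)\geq 1/(\ell(0)^{-1}+Cs)$, whose integral diverges logarithmically; combined with $dt(\gamma')\geq c\,\ell$ this forces $t\to\infty$. A naive exponential Gronwall lower bound $\ell(s)\geq\ell(s_0)e^{-C(s-s_0)}$ would be integrable and hence insufficient. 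Once $I_+=\infty\Rightarrow t_\infty=\infty$ is established, the contrapositive gives $I_+<\infty$ whenever $t_\infty<\infty$, and \emph{then} failure of~\itref{ItGeoGMaxL} genuinely yields $I_+\notin\ol{\gamma^{-1}(\pa M)}$ and your endgame (transversality via Proposition~\ref{PropGeoANull}, reflection by $\rho$) goes through.
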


In other words, a broken null-geodesic is future-inextendible if and only if it leaves every region $\{t\leq t_0\}$, $t_0<\infty$ (this may happen even in the case $I_+<\infty$, e.g.\ for $M=\{(\tilde t,x)\colon|\tilde t|<\pi/2,|x|\leq 1\}$ with the metric $g=-dt^2+dx^2$, $t=\tan\tilde t$), or it undergoes infinitely many reflections as $s\to I_+<\infty$; similarly for past inextendibility. We remark that the latter scenario can indeed occur in certain cases when $\pa M$ is flat to infinite order; see \cite[\S6]{TaylorGrazing}.

\begin{proof}[Proof of Proposition~\ref{PropGeoGMax}]
  We note that $dt(\gamma'(s))>0$ for all $s\in I$ since $dt$ is past timelike and $\gamma'(s)$ is future causal; hence $t\circ\gamma$ is strictly increasing.

  If $I_+=\infty$, then $\gamma$ is clearly future inextendible. Suppose $I_+<\infty$ and $t(\gamma(s))\to\infty$ as $s\to I_+$. If there were an extension $\gamma_1\colon I_1\to M$, $I_+\in I_1$, of $\gamma$, then
  \[
    \infty>t(\gamma_1(I_+))=\lim_{\eps\to 0+} t(\gamma_1(I_+-\eps)) = \lim_{\eps\to 0+} t(\gamma(I_+-\eps)) = \infty,
  \]
  a contradiction. We next claim that
  \begin{equation}
  \label{EqGeoGMaxEscape}
    I_+=\infty\ \Lra\ t(\gamma(s))\to\infty,\ s\to\infty.
  \end{equation}
  Taking this for granted, the assumption $t(\gamma(s))\to t_\infty<\infty$ implies $I_+<\infty$; moreover, $\gamma(s)$ stays in a fixed compact set as $s\to I_+$ since $t$ is proper. If there exists a broken null-geodesic $\gamma_1\colon I_1\to M$ extending $\gamma$, $I_+\in I_1$, then since $\gamma_1^{-1}(\pa M)\subset I_1$ is discrete by definition, we infer that $I_+$ is not a limit point of $\gamma_1^{-1}(\pa M)$, hence not of $\gamma^{-1}(\pa M)$. Conversely, if $I_+\not\in\ol{\gamma^{-1}(\pa M)}$, then $s_0:=\max\{s\in I\colon \gamma(s)\in\pa M\}<I_+$. Let $(p_0,V_0):=(\gamma(s_0),\gamma'(s_0+0))$, then
  \[
    \gamma_1(s) := \wt\exp_{p_0}\bigl((s-s_0)V_0\bigr)
  \]
  satisfies $\gamma_1(s)=\gamma(s)\in M^\circ$ for $s\in(s_0,\infty)\cap I$. If $\gamma_1(I_+)\not\in\pa M$, then $\gamma_1|_{I\cup(s_0,I_++\eps)}$, defined for small $\eps>0$, is an extension of $\gamma$. Otherwise, $\gamma_1$ intersects $\pa M$ at $s=I_+$, and it necessarily does so \emph{transversally} according to Proposition~\ref{PropGeoANull}; hence we can continue $\gamma_1(s)$ past $s=s_1$ as a broken null-geodesic by defining
  \[
    \gamma_1(s_1+s'):=\wt\exp_{\gamma_1(s_1)}\bigl(s'\rho(\gamma_1'(s_1-0))\bigr),
  \]
  $s'>0$ small. This construction shows that $\gamma$ is future extendible past $I_+$.

  It remains to prove~\eqref{EqGeoGMaxEscape}. Assume to the contrary that
  \begin{equation}
  \label{EqGeoGTInfty}
    t(\gamma(s))\to t_\infty<\infty\ \ \text{as}\ \ s\to\infty.
  \end{equation}
  Since $K:=t^{-1}([t(\gamma(0)),t_\infty])$ is compact, there exists $c>0$ such that
  \begin{equation}
  \label{EqGeoGMaxBound}
    dt(V) \geq c|V|_{g^+}, \ \ V\in L^+_K M.
  \end{equation}
  Define $\ell(s):=|\gamma'(s)|_{g^+}$. Since the difference of connections $D=\nabla^{g^+}-\nabla^g$ induces a bilinear map $T_p M\otimes T_p M\to T_p M$, $D_p(X,Y)=\nabla^{g^+}_X Y-\nabla^g_X Y$, we can write for $s$ with $\gamma(s)\in M^\circ$:
  \[
    \frac{1}{2}\frac{d}{ds}\bigl(\ell(s)^2\bigr) = g^+|_{\gamma(s)}\bigl(\nabla^{g^+}_{\gamma'(s)}\gamma'(s),\gamma'(s)\bigr) = g^+|_{\gamma(s)}\bigl(D_{\gamma(s)}(\gamma'(s),\gamma'(s)),\gamma'(s)\bigr)
  \]
  On the other hand, if $\gamma(s)\in\pa M$, then $\ell(s+0)=\ell(s-0)$ in view of \eqref{EqGeoGReflIso}. Since $\gamma(s)\in K$ remains in a compact set, this implies
  \[
    \frac{1}{2}\frac{d}{ds}\bigl(\ell(s)^2\bigr) \geq -C\ell(s)^3
  \]
  where $C>0$ is a uniform constant only depending on $K$. Rewriting this differential inequality as $(1/\ell)'\leq C$, we obtain
  \begin{equation}
  \label{EqGeoGMaxLengthBound}
    \ell(s) \geq \frac{1}{\frac{1}{\ell(0)}+C s}.
  \end{equation}
  Therefore, the bound~\eqref{EqGeoGMaxBound} implies
  \[
    t(\gamma(s)) \geq t(\gamma(0)) + \int_0^s c\ell(u)\,du \geq t(\gamma(0)) + \frac{c}{C}\log\bigl(1+C s\ell(0)\bigr),
  \]
  which exceeds $t_\infty$ for sufficiently large $s$, contradicting \eqref{EqGeoGTInfty}. The proof is complete.
\end{proof}

We next study the regularity properties of the broken exponential map. For $(p,V)\in\cD$, the domain of definition of $\expb$, consider the maximal broken null-geodesic $\gamma(s)=\expb_p(s V)$, $s\in I$, let
\[
  \sfJ(p,V):=\#\{s>0\colon\gamma(s)\in\pa M\}\in\N_0\cup\{\infty\}
\]
denote the number of reflections at $\pa M$, and enumerate the affine parameters for which $\gamma$ intersects the boundary:
\[
  \gamma^{-1}(\pa M)\cap(0,\infty) =: \{\sfs_j(p,V) \colon j=1,\ldots,\sfJ(p,V)\},\ \ 0<\sfs_j(p,V)<\sfs_{j+1}(p,V).
\]
Further, let
\[
  \sfp_j(p,V):=\gamma(\sfs_j(p,V)),\ \ \sfV_j(p,V):=\gamma'(\sfs_j(p,V)+0) \in \Lightb_{\sfp_j(p,V)}M
\]
denote the position and the velocity of the broken null-geodesic leaving the boundary at a reflection point. For $k\in\N_0$, define
\[
  \cD_k^\circ := \bigl\{ (p,V)\in\cD \colon \#\{s\in(0,1)\colon \gamma(s)\in\pa M\}=k,\ \expb_p(V)\not\in\pa M\bigr\},
\]
i.e.\ $k$ is the number of reflections of the broken null-geodesic segment $\gamma((0,1))$. Let $\cD_k$ denote the closure of $\cD_k^\circ$ in $\cD$. See Figure~\ref{FigGeoGExpRefl}.

\begin{figure}[!ht]
\centering
  \inclfig{GeoGExpRefl}
  \caption{A broken null-geodesic with initial data $(p,V)\in\cD_2^\circ$. Here $\sfp_j=\sfp_j(p,V)=\expb_p(\sfs_j(p,V)V)$ and $\sfV_j=\sfV_j(p,V)$.}
\label{FigGeoGExpRefl}
\end{figure}

\begin{prop}
\label{PropGeoGExpCont}
  The broken null-geodesic flow on an admissible Lorentzian manifold $(M,g)$ has the following properties:
  \begin{enumerate}
  \item\label{ItGeoGExpContOpenk} For every $k\in\N_0$, the set $\cD_k^\circ\subset\Lightb M$ is open. The functions $\sfs_j(p,V)$ as well as the points $(\sfp_j(p,V),\sfV_j(p,V))\in\Lightb M$ depend smoothly on $(p,V)\in\cD_k^\circ$, and so does $\expb_p(V)\in M^\circ$.
  \item\label{ItGeoGExpCl} We have the decomposition $\pa\cD_k=B_{k,-}\sqcup B_{k,+}$ into a disjoint union of the closed sets
    \begin{align*}
      B_{k,-} &= \{ (p,V)\in\cD \colon \sfJ(p,V)\geq k,\ \sfs_k(p,V)=1 \}, \\
      B_{k,+} &= \{ (p,V)\in\cD \colon \sfJ(p,V)\geq k+1,\ \sfs_{k+1}(p,V)=1 \}.
    \end{align*}
    See Figure~\ref{FigGExpCl}. Furthermore, $\sfs_j$, $\sfp_j$, and $\sfV_j$ for $1\leq j\leq k$ as well as $\expb$ extend from $\cD_k^\circ$ to smooth functions on $\cD_k$.
  \item\label{ItGeoGExpContOpen} $\cD$ is open in $\Lightb M$; more precisely, $\cD_k^\circ\cup\bigcup_{j<k}\cD_j$ is open for all $k\in\N_0$. The map $\expb$ is continuous on $\cD$.
  \end{enumerate}
\end{prop}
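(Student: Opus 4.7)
My plan is to derive all three parts from repeated applications of the implicit function theorem at reflection points. The key geometric input is Proposition~\ref{PropGeoANull}, which ensures that a null-geodesic reaching $\pa M$ from $M^\circ$ does so transversally with outward pointing velocity. Concretely, if $x \in \CI(\wt M)$ is a defining function of $\pa M$, then at a reflection time $\sfs_j(p_0, V_0)$ the derivative $\pa_s[x \circ \wt\exp_{p_0}(s V_0)]$ is nonzero, so the IFT furnishes a smooth local extension of $\sfs_j$; the smoothness of $\sfp_j(p,V)$ and $\sfV_j(p,V)$ (the latter via~\eqref{EqGeoARefl}) follows.

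For part~\itref{ItGeoGExpContOpenk} I would induct on $k$. The case $k = 0$ is immediate since $\expb = \wt\exp$ there, and $\cD_0^\circ$ is the preimage of $M^\circ$ under a smooth map (using also the condition $V \in T^-_{\pa M} M$ from~\eqref{EqGeoGRedLM} at boundary starting points). For the step at $(p_0, V_0) \in \cD_{k+1}^\circ$, the IFT yields smooth $\sfs_1, \sfp_1, \sfV_1$ nearby, and the inductive hypothesis applied to the rescaled continuation from $(\sfp_1(p,V), (1-\sfs_1(p,V))\sfV_1(p,V)) \in \cD_k^\circ$ completes the smoothness of the remaining $\sfs_j, \sfp_j, \sfV_j$ and of $\expb_p(V) = \wt\exp_{\sfp_{k+1}(p,V)}((1-\sfs_{k+1}(p,V))\sfV_{k+1}(p,V))$.

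For part~\itref{ItGeoGExpCl}, disjointness of $B_{k,-}$ and $B_{k,+}$ is immediate from $\sfs_k < \sfs_{k+1}$. The core observation is that at any point of $\cD_k$, iterating the IFT $k$ times --- without requiring the resulting reflection times to lie in $(0, 1)$ --- produces smooth extensions of $\sfs_j, \sfp_j, \sfV_j$ for $1 \leq j \leq k$ to a full open neighborhood in $\Lightb M$, since transversality of each reflection persists under small perturbations. The formula $\wt\exp_{\sfp_k(p,V)}((1-\sfs_k(p,V))\sfV_k(p,V))$, which equals $\expb_p(V)$ on $\cD_k^\circ$ by part~\itref{ItGeoGExpContOpenk}, then defines a smooth extension to this neighborhood; at $B_{k,-}$ it reduces to $\sfp_k(p,V)$, and at $B_{k,+}$ it gives the boundary endpoint arising from the $(k+1)$-st reflection occurring at parameter $1$. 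Conversely, a sequence in $\cD_k^\circ$ converging to a point of $\pa \cD_k$ must have $\sfs_k \to 1$ or $\sfs_{k+1} \to 1$, placing the limit in $B_{k,-} \cup B_{k,+}$.

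For part~\itref{ItGeoGExpContOpen}, I would first observe that any $(p_0, V_0) \in \cD$ admits only finitely many reflections in $(0, 1]$: otherwise they would accumulate at some $s_* \leq 1$, and Proposition~\ref{PropGeoGMax} would force the maximal interval to satisfy $I_+ \leq s_*$, contradicting $1 \in I$. Hence $(p_0, V_0) \in \cD_k$ for some $k$, and both openness of $\cD$ and continuity of $\expb$ follow from the smooth extensions of part~\itref{ItGeoGExpCl}. The refined openness of $\cD_k^\circ \cup \bigcup_{j<k}\cD_j$ is checked stratum by stratum: a small perturbation of a point in $B_{j,\pm}$ (with $j \leq k-1$) alters the reflection count by at most one, landing in some $\cD_{j'}^\circ$ or $B_{j',\pm}$ with $j' \in \{j-1, j, j+1\} \subset \{0, \ldots, k\}$, which sits inside the target. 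I expect the main obstacle to be the bookkeeping: tracking reflection counts carefully across the codimension-one strata $B_{k,\pm}$ and verifying that the candidate smooth extensions of $\expb$ agree where the various local constructions overlap.
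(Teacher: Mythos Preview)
Your approach is correct and essentially the same as the paper's: both hinge on the implicit function theorem at each reflection, with transversality supplied by Proposition~\ref{PropGeoANull}. Your inductive organization of part~\itref{ItGeoGExpContOpenk} (peel off the first reflection, recurse on the continuation) is a clean repackaging of the paper's explicit backward construction via fiber dilations $R_c$ and the sets $Z_j^\pa$.

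One place where the paper does more work than your sketch: in part~\itref{ItGeoGExpCl} you write ``at any point of $\cD_k$, iterating the IFT $k$ times \ldots\ produces smooth extensions,'' but this presupposes that a limit point of $\cD_k^\circ$ actually has $k$ reflections to iterate through. A priori the $k$-th reflection could vanish in the limit (i.e.\ $\bar\sfJ=k-1$). The paper rules this out by observing that the post-reflection velocity $\sfV_{k-1}$ is \emph{strictly} inward pointing, which forces a uniform positive lower bound on $\sfs_k(p_i,V_i)-\sfs_{k-1}(p_i,V_i)$ along the approximating sequence; passing to a subsequential limit then exhibits the $k$-th boundary intersection for the limiting geodesic. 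This is exactly the kind of ``bookkeeping'' you flag at the end, but it is the one step that requires a genuine geometric input beyond the IFT, so it is worth making explicit. The reverse inclusion $B_{k,\pm}\subset\pa\cD_k$ (which you do not mention) is handled in the paper by the scaling $(p,(1\pm\eps)V)\in\cD_k^\circ$.
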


\begin{figure}[!ht]
\centering
  \inclfig{GeoGExpCl}
  \caption{Illustration of part~\itref{ItGeoGExpCl} of Proposition~\ref{PropGeoGExpCont}. \textit{Left:} initial data $(p,V)\in B_{1,+}$. \textit{Right:} initial data $(p,V)\in B_{1,-}$, with $(\sfp_1,\sfV_1)$ defined by smooth extension from $\cD_1^\circ$.}
\label{FigGExpCl}
\end{figure}

\begin{proof}[Proof of Proposition~\ref{PropGeoGExpCont}]
  \ul{\itref{ItGeoGExpContOpenk}}: for $k=0$ and $(p,V)\in\cD_0^\circ$, we have $\expb_p(s V)=\wt\exp_p(s V)$ for $0\leq s\leq 1$, and $\wt\exp_p(V)\in M^\circ$; hence the smooth dependence of $\expb_p(V)$ on $(p,V)$ follows from that of the standard exponential map $\wt\exp_p(V)$.

  Consider now $k\geq 1$. Denote the map acting by dilation by $c\in\R$ in the fibers by $R_c\colon L M\to L M$, $(p,V)\mapsto (p, c V)$. Let $(p,V)\in\cD_k^\circ$, and let $\bar\sfs_j=\sfs_j(p,V)$, $\bar\sfp_j=\sfp_j(p,V)$, and $\bar\sfV_j=\sfV_j(p,V)$ for $j=1,\ldots,k$. We start by defining neighborhoods of $(\bar\sfp_j,\bar\sfV_j)$ of initial conditions of null-geodesics for which the next intersection with $\pa M$ is controlled. Thus, for $j=0$, let
  \[
    Z_0 := (\wt\exp\circ R_{\bar\sfs_1})^{-1}(\pa M).
  \]
  Note that $(p,V)\in Z_0$. Moreover, in a neighborhood of $(p,V)$, $Z_0$ is a smooth codimension 1 submanifold of $\Lightb M$ which is transversal to $\R_+ V\subset\Lightb_p M$; this follows from the implicit function theorem applied to the map $x\circ\wt\exp\circ R_{\bar s_1}$ where $x\in\CI(M)$ is a boundary defining function: this map has a non-zero differential at $(p,V)$ due to part~\itref{ItGeoANullOutward} of Proposition~\ref{PropGeoANull}. See Figure~\ref{FigGeoGExpContZ0}.

  \begin{figure}[!ht]
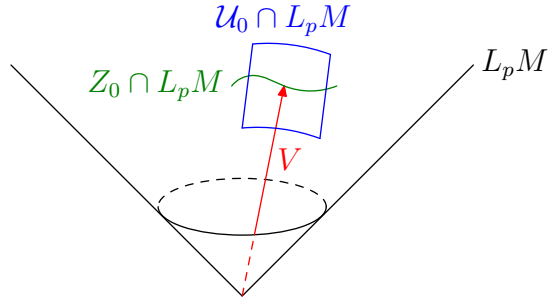

  \centering
    \inclfig{GeoGExpContZ0}
    \caption{Illustration of the proof of part~\itref{ItGeoGExpContOpenk} of Proposition~\ref{PropGeoGExpCont}. Shown are the preimage $Z_0$ of $\pa M$ under $\wt\exp\circ R_{\bar\sfs_1}$ within $L_p M$ (only the future half of which is drawn) near $V$, and the intersection of the neighborhood $\cU_0$ of $(p,V)$ with $L_p M$.}
  \label{FigGeoGExpContZ0}
  \end{figure}
  
  For a small neighborhood $\cU_0\subset\Lightb M$ of $(p,V)$ such that $\cU_0\subset\bigcup_{c\in(1-\eps,1+\eps)}R_c Z_0$, with $\eps>0$ small, and such that $Z_0\cap\ol{\cU_0}$ is a smooth connected submanifold transversal to all dilation (in the fiber) orbits intersecting $\cU_0$, define the function $d_0\in\CI(\cU_0)$ by
  \begin{equation}
  \label{EqGeoGExpContScaling}
    d_0(q,W) := \bar\sfs_1 c, \ \ \ R_c(q,W)\in Z_0,\ c\in(\tfrac{1}{1+\eps},\tfrac{1}{1-\eps}),
  \end{equation}
  so $d_0(p,V)=\bar\sfs_1$, and $\wt\exp_q(R_{d_0(q,W)}W)\in\pa M$ for $(q,W)\in\cU_0$. Similarly, but now working within $\pa M$, we define for $1\leq j\leq k-1$
  \[
    Z_j^\pa := \bigl(\wt\exp\circ R_{\bar\sfs_{j+1}-\bar\sfs_j}|_{\Lightb_{\pa M}M}\bigr)^{-1}(\pa M) \subset \Lightb_{\pa M}M,
  \]
  so $(\bar\sfp_j,\bar\sfV_j)\in Z_j^\pa$, and near this point, $Z_j^\pa$ is a smooth codimension 1 submanifold of $\Lightb_{\pa M}M$ transversal to $\R_+\bar\sfV_j\subset\Lightb_{\bar\sfp_j}M$. For a small neighborhood $\cU_j^\pa\subset\Lightb_{\pa M}M$ of $(\bar\sfp_j,\bar\sfV_j)$ such that $\cU_j^\pa\subset\bigcup_{c\in(1-\eps,1+\eps)}R_c Z_j^\pa$, with $Z_j^\pa\cap\ol{\cU_j^\pa}$ smooth and connected, define $d_j\in\CI(\cU_j^\pa)$ by
  \[
    d_j(q,W) = (\bar\sfs_{j+1}-\bar\sfs_j)c,\ \ \ R_c(q,W)\in Z_j^\pa,
  \]
  so $d_j(\bar\sfp_j,\bar\sfV_j)=\bar\sfs_{j+1}-\bar\sfs_j$, and $\wt\exp_q(R_{d_j(q,W)}W)\in\pa M$ for $(q,W)\in\cU_j^\pa$. Lastly, let $\cU_k^\pa\subset\Lightb_{\pa M}M$ denote a small neighborhood of $(\bar\sfp_k,\bar\sfV_k)$ such that $\cU_k^\pa\subset(\wt\exp\circ R_{1-\bar\sfs_k})^{-1}(M^\circ)$; in particular $(\bar\sfp_k,\bar\sfV_k)\in\cU_k^\pa$.

  We now construct a neighborhood of $(p,V)$ for which the $j$-th reflection point and velocity lie in $\cU_j^\pa$. Thus, encoding point and velocity of the extended manifold by the map
  \[
    \wt\exp'(q,W) := \bigl(\wt\exp_q(W), \tfrac{d}{ds}\wt\exp_q(s W)\bigr|_{s=1}\bigr)
  \]
  we inductively define $\cV_k^\pa:=\cU_k^\pa$ and, for $1\leq j\leq k-1$,
  \[
    V_j^\pa := \cU_j^\pa \cap \bigl(\rho\circ R_{d_j}^{-1}\circ\wt\exp'\circ R_{d_j}\bigr)^{-1}(V_{j+1}^\pa),
  \]
  where we used the reflection map $\rho$ defined in equation~\eqref{EqGeoARefl}, and finally
  \[
    \cV_0 := \cU_0 \cap \bigl(\rho\circ R_{d_0}^{-1}\circ\wt\exp'\circ R_{d_0}\bigr)^{-1}(V_1^\pa).
  \]
  Then $\cV_0$ is the desired neighborhood of $(p,V)$. Indeed, if $(q,W)\in\cV_0$, we inductively define $(q_0,W_0)=(q,W)$, and for $j=0,\ldots,k-1$
  \[
    (q_{j+1},W_{j+1}) := \rho\Bigl(R_{d_j(q_j,W_j)}^{-1}\bigl(\wt\exp'\bigl(R_{d_j(q_j,W_j)}(q_j,W_j)\bigr)\bigr)\Bigr) \in \cV_{j+1}^\pa.
  \]
  Then we have $\expb_q(\sfs_j(q,W) W)=q_j$ for $1\leq j\leq k$, where
  \[
    \sfs_j(q,W) = \sum_{i<j} d_i(q_i,W_i).
  \]
  Therefore $\expb_q(W)=\wt\exp_{q_k}\bigl((1-s_k(q,W))W_k\bigr)$, and by construction, we also have $(\sfp_j(q,W),\sfV_j(q,W))=(q_j,W_j)$, with smooth dependence on $(q,W)$.

  \ul{\itref{ItGeoGExpCl}}: Suppose $\cD\supset\pa\cD_k\ni(p,V)=\lim_{i\to\infty}(p_i,V_i)$ with $(p_i,V_i)\in\cD_k^\circ$, and denote $\bar\sfJ:=\sfJ(p,V)$, $\bar\sfs_j:=\sfs_j(p,V)$. The above arguments imply $\bar\sfJ\geq k-1$, and
  \[
    \bar\sfs_j = \lim_{i\to\infty} \sfs_j(p_i,V_i),\ \ j\leq\min(k,\bar\sfJ),
  \]
  likewise $(\bar\sfp_j,\bar\sfV_j):=(\sfp_j(p,V),\sfV_j(p,V))=\lim_{i\to\infty} (\sfp_j(p_i,V_i),\sfV_j(p_i,V_i))$ for these $j$. Let $\gamma(s)=\expb_p(s V)$ and $\gamma_i(s)=\expb_{p_i}(s V_i)$.
  
  Suppose first that $\bar\sfJ\geq k$, then $\bar\sfs_j\in(0,1]$ for $j\leq k$. If $\bar\sfs_k=1$, then $\gamma(s)$ undergoes $(k-1)$ reflections and ends at $\gamma(1)\in\pa M$, and $(p,V)\in B_{k,-}$. If $\sfs_k<1$, then both the case $\sfJ=k$ and the case $\sfJ\geq k+1$, $\sfs_{k+1}>1$, would imply $(p,V)\in\cD_k^\circ$. Hence, we must have $\sfJ\geq k+1$, and $\sfs_{k+1}\leq 1$. If $\sfs_{k+1}<1$, then the arguments for \itref{ItGeoGExpContOpenk} would imply that $\gamma_i((0,1))$ intersects $\pa M$ at least $k+1$ times for large $i$. Thus necessarily $\sfs_{k+1}=1$, and $(p,V)\in B_{k,+}$.
  
  In order to exclude the case that $\bar\sfJ=k-1$ ($k\geq 1$), note that $(p,V)\in\cD$ implies that we can define $\gamma(s)$ as a broken null-geodesic for $s\in[0,1+\eps]$ for some $\eps>0$. We claim that $\gamma$ necessarily has a $k$-th intersection point with $\pa M$, contradicting $\bar\sfJ<k$. If $k=1$, this is straightforward, as $\gamma|_{[0,1+\eps]}$ not intersecting $\pa M$ would imply (by continuity of $\wt\exp$) the same statement for $\gamma_i|_{[0,1+\eps]}$, contradicting $V_i\in\cD_1^\circ$. For $k\geq 2$, we note that
  \[
    \sfp_k(p_i,V_i) = \wt\exp_{\sfp_{k-1}(p_i,V_i)}\bigl((\sfs_k(p_i,V_i)-\sfs_{k-1}(p_i,V_i))\sfV_{k-1}(p_i,V_i)\bigr);
  \]
  passing to a subsequence, we may assume that $\sfs_k(p_i,V_i)\to\bar\sfs\in[\bar\sfs_{k-1},1]$. Since $\lim_{i\to\infty}\sfV_{k-1}(p_i,V_i)=\bar\sfV_{k-1}\in T^-_{\pa M}M$ is \emph{strictly} inward pointing (as follows from the definition of $\expb$ and $\cD$), there exists $c>0$ such that for all $i$, $\sfs_k(p_i,V_i)-\sfs_{k-1}(p_i,V_i)\geq c$; therefore $\bar\sfs>\bar\sfs_{k-1}$, and we obtain
  \[
    \wt\exp_{\bar\sfp_{k-1}(p,V)}\bigl((\sfs-\sfs_{k-1}(p,V))\sfV_{k-1}(p,V)\bigr) \in \pa M,
  \]
  so indeed $\bar\sfJ\geq k$ (and $\sfs_k(p,V)=\bar\sfs$).

  This proves the inclusion $\pa\cD_k\subset B_{k,-}\sqcup B_{k,+}$. (The disjointness of the two sets on the right is evident.) For the reverse inclusion, we note that $(p,V)\in B_{k,-}$ is the limit as $\eps\to 0$ of $(p,(1+\eps)V)\in\cD_k^\circ$ (this uses that $\sfs_j(p,c V)=c^{-1}\sfs_j(p,V)$ for $c>0$), while $(p,V)\in B_{k,+}$ is the limit of $(p,(1-\eps)V)\in\cD_k^\circ$. The smooth extendibility of $\sfs_j$ etc.\ follows easily from the construction used in the proof of part~\itref{ItGeoGExpContOpenk}.

  \ul{\itref{ItGeoGExpContOpen}}: Note that $B_{k,+}=B_{k+1,-}$, so the family $\expb|_{\cD_k}$, $k\in\N_0$, of smooth maps does glue to a continuous function on $\bigcup\cD_k$; furthermore, $\cD=\bigcup_{k\in\N_0}\cD_k$ by definition of broken null-geodesics. In view of \itref{ItGeoGExpContOpenk}, and noting that $B_{0,-}=\emptyset$, it remains to show that every $(p,V)\in B_{k-1,+}$, $k\in\N$, has an open neighborhood in $\Lightb M$ which is contained in $\cD_{k-1}\cup\cD_k^\circ$; but this follows again from the proof of part~\itref{ItGeoGExpContOpenk}.
\end{proof}

\subsection{Tame broken null-geodesics}
\label{SubsecGeoTame}

We define the class of tame null-geodesics for which the possibility~\itref{ItGeoGMaxL} in Proposition~\ref{PropGeoGMax} does not occur for a given range of values of $t$:

\begin{definition}
\label{DefGeoTame}
  We call an inextendible broken null-geodesic $\gamma\colon I\to M$ \emph{tame for $-\infty\leq a<t<b\leq\infty$} if for all $a<a',b'<b$, we have $t(\gamma(I))\cap(a,a'),t(\gamma(I))\cap(b',b)\neq\emptyset$. If $\gamma$ is tame for $-\infty<t<\infty$, we simply say that $\gamma$ is \emph{tame}.
\end{definition}

By Proposition~\ref{PropGeoGMax} and its proof, this can be rephrased as follows: an inextendible broken null-geodesic $\gamma$ is tame for $a<t<b$ if and only if the only possible accumulation points of $\gamma^{-1}(\pa M)\cap(a,b)\subset\ol\R=\R\cup\{\pm\infty\}$ are $a$ and $b$; that is, $\gamma$ only undergoes a finite number of reflections whenever $t\circ\gamma$ stays in a fixed compact subset of $(a,b)$. Tame geodesics are precise those for which $t(\gamma(I))=\R$.

From the point of view of solving boundary value problems for wave equations, we have precise control over the singularities of geometric optics solutions along tame broken null-geodesics \cite{TaylorReflection}. There are much more general results about the propagation of singularities for boundary value problems, see for example \cite{TaylorGrazing,MelroseSjostrandSingBVPI,MelroseSjostrandSingBVPII,MelroseTaylorParametrix}, which would become relevant if one dropped the null-convexity assumption on $\pa M$. They in particular give rather precise information on the curves along which singularities intersecting the boundary tangentially propagate; for null-convex $\pa M$, these are null-geodesics within the boundary. Our reconstruction arguments on the other hand crucially rely on the \emph{spacelike} nature of the boundary light observation sets, which is guaranteed by the tameness assumption.

To illustrate Definition~\ref{DefGeoTame} and to provide a natural class of examples, we show:

\begin{prop}
\label{PropGeoTameStr}
  If $(M,g)$ is admissible with \emph{strictly} null-convex boundary, then all inextendible broken null-geodesics $\gamma\colon I\to M$ are tame.
\end{prop}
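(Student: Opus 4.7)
My plan is to argue by contradiction, assuming $\gamma$ is not tame. By Proposition~\ref{PropGeoGMax}, after a possible time reversal, $I_+<\infty$, $t(\gamma(s))\to t_\infty<\infty$ as $s\to I_+$, and $\gamma$ has infinitely many reflection times $s_i\nearrow I_+$. The points $p_i:=\gamma(s_i)\in\pa M$ and the (outward pointing) velocities $V_i:=\gamma'(s_i-0)$ remain in a compact subset of $T_{\pa M}M$ by properness of $t$ and the $g^+$-length control established in the proof of Proposition~\ref{PropGeoGMax}; passing to a subsequence, $(p_i,V_i)\to(p_\infty,V_\infty)$ with $V_\infty$ null and either strictly outward or tangent to $\pa M$. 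If $V_\infty$ were strictly outward, then $U_\infty=\rho(V_\infty)$ would be strictly inward; continuous dependence of the geodesic flow together with the uniform transversality $|dx(U_i)|\geq c>0$ for large $i$ would then force a uniform positive lower bound on $s_{i+1}-s_i$, contradicting $s_i\to I_+<\infty$. Hence $V_\infty\in T_{p_\infty}\pa M$ is null tangent, and strict null-convexity gives $II(V_\infty,V_\infty)>0$.

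The core step is to show that $a_i:=|dx(V_i)|$, which tends to $0$ since $V_\infty$ is tangent, is in fact bounded below. In Gaussian-normal coordinates $(x,y)$ near $p_\infty$ with $g=dx^2+g_{ab}(x,y)\,dy^a dy^b$ and $\pa M=\{x=0\}$, write $\gamma(s)=(x(s),y(s))$ so that $a_i=|\dot x(s_i\pm 0)|$; the reflection preserves $\dot y$ and flips $\dot x$. The geodesic equation yields $\ddot x=\tfrac{1}{2}\pa_x g_{ab}(x,y)\dot y^a\dot y^b$, and combined with the identity $II(V,V)=-\tfrac{1}{2}\pa_x g_{ab}(0,y)V^aV^b$ for $V\in T\pa M$ this produces
\[
  \frac{d}{ds}\bigl(-\dot x(s)^2\bigr)=2\dot x(s)\cdot II(\dot y(s),\dot y(s))+O(x(s)).
\]
Integrating over an arch $(s_i,s_{i+1})$ and exploiting the key cancellation $\int_{s_i}^{s_{i+1}}\dot x\,ds=x(s_{i+1})-x(s_i)=0$ (after replacing $II(\dot y,\dot y)$ by its limiting value $II(V_\infty,V_\infty)$ modulo an $O(\Delta_i)$ error, where $\Delta_i:=s_{i+1}-s_i$), while noting that $\max|x|\lesssim a_i\Delta_i$ on the arch by concavity, I would obtain the approximate conservation
\[
  \bigl|a_{i+1}^2-a_i^2\bigr|\leq C\,a_i\,\Delta_i^2.
\]

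To close the argument, strict null-convexity forces $\ddot x\leq -c<0$ uniformly on arches for large $i$, so $F:=x\circ\gamma$ obeys $F''\leq -c$ with $F(s_i)=F(s_{i+1})=0$ and slopes $\pm a_i,\mp a_{i+1}$ at the endpoints, giving $\Delta_i\leq c^{-1}(a_i+a_{i+1})$. In the case $a_{i+1}\leq a_i$, substituting back produces $a_{i+1}^2\geq a_i^2(1-C'a_i)$; taking logarithms and summing the tail, since the same estimate gives $\sum_i a_i\lesssim\sum_i\Delta_i\leq I_+-s_{N_0}<\infty$, one obtains $a_N^2\geq a_{N_0}^2\exp(-C'')>0$ for all $N\geq N_0$, contradicting $a_i\to 0$. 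The main obstacle is securing the cancellation-based estimate $|a_{i+1}^2-a_i^2|=O(a_i\Delta_i^2)$ with the required cubic error: this rests on using Gaussian-normal coordinates to isolate the structure of the $O(x)$ remainder in the evolution of $\dot x^2$, and on the vanishing mean of $\dot x$ over each arch which is the hallmark of the boundary bouncing.
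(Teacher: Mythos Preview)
Your argument is correct in outline and close to the paper's, but the endgame is organized differently and there is one small slip. The paper also argues by contradiction, shows $(p_i,V_i)\to(p_\infty,V_\infty)$ with $V_\infty$ null and tangent, and uses strict null-convexity to get $f''\leq -k/2$ for $f=x\circ\gamma$; it then Taylor-expands $f$ about each reflection time to obtain $\theta_{j+1}\geq\theta_j-C\theta_j^2$ (your $a_i$ is the paper's $\theta_j$) together with the two-sided bound $\Delta_j\asymp\theta_j$. From there the paper invokes an elementary sequence lemma to deduce $\theta_j\gtrsim 1/j$, hence $\sum_j\Delta_j\gtrsim\sum_j 1/j=\infty$, contradicting $b_j\to I_+<\infty$. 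Your route to the recursion is a bit different---you use the cancellation $\int_{s_i}^{s_{i+1}}\dot x\,ds=0$ in Gaussian normal coordinates to get $|a_{i+1}^2-a_i^2|\lesssim a_i\Delta_i^2$---and your closing step is neater: from $\sum a_i<\infty$ the telescoping product $\prod(1-C'a_i)$ is bounded below, so $a_N$ cannot tend to $0$. This bypasses the auxiliary sequence lemma entirely.

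The slip is in the phrase ``the same estimate gives $\sum_i a_i\lesssim\sum_i\Delta_i$.'' The inequality $F''\leq -c$ you just used yields, upon integrating $F'$ over an arch, $a_i+a_{i+1}\geq c\,\Delta_i$, which is the \emph{wrong} direction. What you actually need is $\Delta_i\gtrsim a_i$, and this follows from the opposite one-sided bound $F''\geq -C$ (i.e.\ $|\ddot x|$ bounded, which is immediate since $\gamma'$ stays in a compact set and the Christoffel symbols are smooth): then $F(s)\geq a_i(s-s_i)-\tfrac{C}{2}(s-s_i)^2$ forces $\Delta_i\geq 2a_i/C$. With this correction your summability argument goes through.
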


This is a generalization of \cite[Lemma~6.1]{TaylorGrazing}.

\begin{proof}[Proof of Proposition~\ref{PropGeoTameStr}]
  Assume the conclusion is false, then we must have $I_+:=\sup I<\infty$, and $\gamma^{-1}(\pa M)\cap[0,I_+)=\{b_j\colon j\in\N_0\}\subset I$, with
  \begin{equation}
  \label{EqGeoTameStrTimes}
    0\leq b_{j-1}<b_j\to I_+, \ \ t(\gamma(b_j))\to t_\infty<\infty.
  \end{equation}
  Denote $p_j:=\gamma(b_j)\in\pa M$. By the proof of Proposition~\ref{PropGeoGMax}, in particular the estimate~\eqref{EqGeoGMaxLengthBound}, there exists a constant $C_+>1$ such that $C_+^{-1}\leq |\gamma'(s)|_{g^+}\leq C_+$ for all $s\in[0,I_+)$; thus, $\gamma(s)$ is uniformly continuous, which implies that the limit $\lim_{s\to I_+}\gamma(s)=:p_\infty\in\pa M$ exists.
  
  Letting
  \[
    V_j:=\gamma'(b_j+0)\in\Lightbp_{p_j}M,
  \]
  we claim that $V_j$ converges to some $0\neq V_\infty\in L_{p_\infty}\pa M$, i.e.\ $V_\infty$ is tangent to the boundary; note that $C_+^{-1}\leq|V_j|_{g^+}\leq C_+$ for all $j$, proving that any subsequential limit of the $V_j$ must be a \emph{non-zero} element of $L\pa M$. To prove the convergence, denote by $\nu$ the outward unit normal to $\pa M$, and assume to the contrary that there is a subsequence $V_{j_k}$ such that $|g(V_{j_k},\nu)|\geq C_\pa>0$ for some fixed constant $C_\pa$. Using a finite number of local coordinate charts covering the compact set $K:=\pa M\cap t^{-1}([t(\gamma(0)),t_\infty])$, one easily sees that
  \begin{align*}
    C_R := \inf_{q\in K} \sup \{ t \colon \wt\exp_q((0,t)W) &\subset M^\circ\ \text{for all}\ W\in (T^-_{\pa M}M)_q, \\
      & C_+^{-1}\leq |W|_{g^+}\leq C_+,\ |g(W,\nu)|\geq C_\pa \}
  \end{align*}
  is \emph{positive}, as follows from the fact that in a local coordinate chart and for such $W$, we have $\wt\exp_q(s W)=q+s W+\cO(s^2)$, which does not return to $\pa M$ for a uniform amount of time (depending on $C_\pa,C_+,K$, and the $\cC^1(K)$ norm of the metric $g$). But then $b_{j_k+1}-b_{j_k}\geq C_R$, contradicting \eqref{EqGeoTameStrTimes}. A similar argument shows more generally that
  \begin{equation}
  \label{EqGeoTameStrGammaPrime}
    \lim_{s\to I_+}\gamma'(s)=V_\infty \in L_{p_\infty}\pa M.
  \end{equation}
  By affinely reparameterizing $\gamma$, we may assume $|V_\infty|_{g^+}=1$.

  Fix a boundary defining function $x$, and let
  \[
    f:= x\circ\gamma \geq 0,
  \]
  then $f$ is continuous on the closed interval $[0,I_+]$, with $f(b_j)=0$ for all $j$; therefore $\lim_{s\to I_+}f(s)=f(I_+)=0$. Let further
  \[
    \theta_j:=f'(b_j+0)=dx(V_j)>0,
  \]
  then $\lim_{j\to\infty}\theta_j=0$. We aim to prove estimates on the `chord lengths' $b_{j+1}-b_j$ and the `reflection angles' $\theta_j$ as $j\to\infty$ which will contradict the convergence \eqref{EqGeoTameStrTimes}; our arguments will slightly more generally prove that reflection points cannot accumulate near a strict null-convex boundary point.
  
  The strict null-convexity of $\pa M$ at $p_\infty$ implies, by continuity, that $(H x)(V,V)\leq-k<0$ for some constant $k>0$ whenever $V\in L_p\pa M$, $|V|_{g^+}=1$, $p$ near $p_\infty$. For large $j$ then, by~\eqref{EqGeoTameStrGammaPrime}, we have
  \[
    f''(s) = (H x)(\gamma'(s),\gamma'(s)) \leq -k/2,\ \ s\in(b_j,b_{j+1}),
  \]
  which gives an estimate for how close $\gamma$ stays to $\pa M$:
  \begin{equation}
  \label{EqGeoStrTameClose}
    f(s)\leq\theta_j(s-b_j)-k(s-b_j)^2/4\leq \theta_j^2/k.
  \end{equation}
  Furthermore, $f(b_{j+1})=0$ implies the estimate
  \begin{equation}
  \label{EqGeoStrTameClose2}
    b_{j+1}-b_j\leq 4\theta_j/k.
  \end{equation}
  Consider now a reflection point $p_j$, $j$ large, then $V_j$ is $\theta_j$-close to a null vector $V'_j\in L_{p_j}\pa M$. Let $k_j:=-(H x)(V'_j,V'_j)>0$, then the smoothness of $H x$ and the estimates~\eqref{EqGeoStrTameClose}--\eqref{EqGeoStrTameClose2} give
  \[
    f''(s) = -k_j + \cO(\theta_j + f(s) + |s-b_j|)=-k_j+\cO(\theta_j),\ \ s\in(b_{j-1},b_j).
  \]
  We also record that $k_j\geq k/2$ for large $j$. Therefore, for such $s$, we have
  \begin{align*}
    f'(s) &= \theta_j - (s-b_j)(k_j+\cO(\theta_j)), \\
    f(s) &= (s-b_j)\theta_j - (s-b_j)^2(k_j+\cO(\theta_j))/2.
  \end{align*}
  Hence, $f(b_{j+1})=0$ implies
  \begin{equation}
  \label{EqGeoTameStrBDiff}
    b_{j+1}-b_j = \frac{2\theta_j}{k_j+\cO(\theta_j)} \geq C_B\theta_j,
  \end{equation}
  and thus
  \[
    \theta_{j+1} = -f'(b_{j+1}) = -\theta_j + \frac{2\theta_j(k_j+\cO(\theta_j)}{k_j+\cO(\theta_j)} \geq \theta_j - C\theta_j^2,
  \]
  with $C$ and $C_B>0$ constants independent of $j$.

  Fix $j_0$ such that $\theta_j<\frac{1}{2 C}$ for $j\geq j_0$. Since $x\mapsto x-C x^2$ is increasing for $x<\frac{1}{2 C}$, we conclude that $\theta_j\geq a_j/C$, where $a_{j_0}=C \theta_{j_0}\in(0,1/2)$ and
  \[
    a_{j+1} = a_j - a_j^2.
  \]
  Since $a_j\geq\frac{C_A}{j-(j_0-1)}$ by Lemma~\ref{LemmaGeoTameSeq} below, the estimate~\eqref{EqGeoTameStrBDiff} implies that $b_j\geq C_0 + C_1\log j$ for some constants $C_0$ and $C_1>0$, contradicting \eqref{EqGeoTameStrTimes}. The proof is complete.
\end{proof}

\begin{lemma}
\label{LemmaGeoTameSeq}
  If $a_1\in(0,1/2)$ and $a_{j+1}=a_j-a_j^2$, then $a_j \geq C/j$ for some $C>0$.
\end{lemma}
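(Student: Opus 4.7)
The plan is to pass to reciprocals. First I would verify by a quick induction that the sequence $(a_j)$ stays in $(0,1/2)$: if $a_j\in(0,1/2)$, then $a_{j+1}=a_j(1-a_j)>0$ and $a_{j+1}<a_j<1/2$. In particular $a_j$ is positive and monotone decreasing, so dividing by $a_j$ and $a_{j+1}$ below is legitimate.

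Next I would compute the key telescoping identity
\[
  \frac{1}{a_{j+1}}-\frac{1}{a_j} \;=\; \frac{1}{a_j(1-a_j)}-\frac{1}{a_j} \;=\; \frac{1}{1-a_j}.
\]
Since $a_j<1/2$, this gives the uniform bound $\frac{1}{a_{j+1}}-\frac{1}{a_j}\leq 2$. Summing from $1$ to $j-1$ yields
\[
  \frac{1}{a_j} \;\leq\; \frac{1}{a_1} + 2(j-1),
\]
and hence
\[
  a_j \;\geq\; \frac{1}{\frac{1}{a_1}+2(j-1)} \;\geq\; \frac{C}{j}
\]
for any constant $0<C\leq\min\bigl(a_1,\tfrac{1}{1/a_1+2}\bigr)\cdot\text{(a numerical factor)}$; concretely $C:=\bigl(\tfrac{1}{a_1}+2\bigr)^{-1}$ works, since then $\frac{1}{a_1}+2(j-1)\leq j\bigl(\tfrac{1}{a_1}+2\bigr)$ for all $j\geq 1$.

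There is no real obstacle here — the only subtle point is checking $a_j<1/2$ so that the reciprocal inequality $1/(1-a_j)\leq 2$ is uniform, which is exactly why the hypothesis $a_1\in(0,1/2)$ (rather than $a_1\in(0,1)$) is imposed.
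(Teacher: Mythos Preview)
Your proof is correct. The telescoping identity $\frac{1}{a_{j+1}}-\frac{1}{a_j}=\frac{1}{1-a_j}$ together with the uniform bound $a_j<1/2$ is the cleanest way to handle this recursion, and your final constant $C=(1/a_1+2)^{-1}$ is easily verified.

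The paper takes a different route: it writes $a_j=b_j/j$ and tracks the auxiliary sequence $b_j$, showing via a short case analysis (depending on whether $b_j\leq j/(j+1)$ or $j/(j+1)\leq b_j\leq 1$) that $b_j$ stays bounded below. Your reciprocal argument is more transparent and more standard for recursions of the type $a_{j+1}=a_j-c\,a_j^2$; it also yields an explicit constant immediately, whereas the paper's case analysis establishes boundedness without naming $C$. The paper's substitution, on the other hand, is tailored to the expected asymptotic $a_j\sim 1/j$ and would adapt more directly if one wanted sharper two-sided bounds on $j a_j$. Either argument is perfectly adequate here.
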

\begin{proof}
  Clearly, $a_j>0$ for all $j$. Write $a_j = b_j / j$, then $b_j>0$, and
  \[
    b_{j+1} = b_j\left(1+\frac{1}{j}\Bigl(1-\frac{j+1}{j}b_j\Bigr)\right)
  \]
  If $b_j\leq\frac{j}{j+1}$ (this holds for  $j=1$), this gives $b_j\leq b_{j+1}\leq 1$. If on the other hand $\frac{j}{j+1}\leq b_j\leq 1$, then $\frac{j-1}{j}\leq b_{j+1}\leq 1$. Thus, $b_j\geq C$ for some $C>0$, as claimed.
\end{proof}

\section{Reconstruction from boundary light observation sets}
\label{SecR}

In this section, we prove (a generalization of) Theorem~\ref{ThmIntro}, showing how one can reconstruct the topological, smooth, and conformal structure of suitable precompact subsets $S\Subset M$ from the observation of light cones on (subsets of) the null-convex boundary $\pa M$, following the arguments outlined in the introduction.

There are substantial differences compared to the arguments in \cite{KurylevLassasUhlmannSpacetime} due to the presence of a boundary which we will explain in more detail below: the boundary allows for the reconstruction of $S$ using (multiply) reflected rays; it necessitates certain restrictions on $S$ due to possible strong refocusing after reflection; and the codimension $1$ nature of $\pa M$ causes complications when there are null conjugate points on $\pa M$ --- we circumvent the latter by \emph{assuming} that there are no such conjugate points in the set $\cU\subset\pa M$ where we observe the future light cones from points in $S$.

Let $(M,g)$ denote an admissible manifold.

\begin{definition}
\label{DefRConj}
  Let $(q,V)\in\Lightb M$, see~\eqref{EqGeoGRedLM}, and suppose $p:=\expb_q(V)\in\pa M$. Then we say that $(q,V)$ and $p$ are \emph{not conjugate} if $\expb_q|_{L_q M}$ has injective differential at $V$, where we define the differential as the limit $D_{(1-\eps)V}\expb_q$ as $\eps\to 0+$.
\end{definition}

If $\expb_q(s V)\not\in\pa M$ for $0<s<1$, this can be phrased equivalently as the condition that the exponential map $\wt\exp_q|_{L_p M}$ has injective differential at $V$.

The existence of the limit follows from part~\itref{ItGeoGExpCl} of Proposition~\ref{PropGeoGExpCont}, since $(q,V)\in B_k^+$ for some $k\in\N_0$. Since broken null-geodesics are transversal to $\pa M$, we can rephrase the definition as follows: denote $Z=(\expb_q)^{-1}(\pa M)\subset\Lightb_q M$, which is a smooth codimension $1$ submanifold near $V$ (see the proof of Proposition~\ref{PropGeoGExpCont}). Then $(q,V)$ and $p$ are not conjugate if and only if the map $Z\ni W\mapsto\expb_q(W)\in\pa M$ has injective differential at $V$; that is, the boundary point near $p$ depends non-degenerately on the initial velocity. See also Figure~\ref{FigGeoGExpContZ0} for a closely related setting.

The implicit function theorem immediately gives:
\begin{lemma}
\label{LemmaRConj}
  If $(q,V)$ and $p$ are not conjugate, then, in the above notation, there exists a neighborhood $U\subset Z$ of $V$ such that $\expb_q|_U\colon U\to\expb_q(U)$ is a diffeomorphism onto its image, which is thus a 1-codimensional smooth submanifold of $\pa M$.
\end{lemma}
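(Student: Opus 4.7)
The plan is to apply the local immersion theorem (equivalently, the inverse function theorem for immersions of equi-dimensional domains composed with an inclusion). The only thing to verify is that $\expb_q|_Z$ is an immersion at $V$, after which the conclusion is automatic.

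First I would pin down dimensions. Since $M$ is $(n+1)$-dimensional, $\pa M$ has dimension $n$, and $L_q M$ is a smooth $n$-dimensional submanifold of $T_q M$ near the nonzero null vector $V$; hence $\Lightb_q M$ is $n$-dimensional near $V$. From the proof of Proposition~\ref{PropGeoGExpCont}\itref{ItGeoGExpContOpenk} (or part~\itref{ItGeoGExpCl}, if $\expb_q$ is not literally smooth through $V$ yet has a smooth extension from the relevant component $\cD_k^\circ$), the function $x\circ\expb_q$ on $\Lightb_q M$, where $x$ is a boundary defining function, has nonzero differential at $V$ thanks to Proposition~\ref{PropGeoANull}\itref{ItGeoANullOutward}; hence $Z=(\expb_q)^{-1}(\pa M)$ is a smooth codimension $1$ submanifold of $\Lightb_q M$ near $V$, and in particular has dimension $n-1$.

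Next I would check that the restriction of $\expb_q$ to $Z$ is an immersion at $V$. By the non-conjugacy hypothesis, the differential $D_V\expb_q\colon T_V L_q M\to T_p M$ (defined as the limit $D_{(1-\eps)V}\expb_q$ as $\eps\to 0^+$) is injective. Since $Z\subset L_q M$, the tangent space $T_V Z$ is a subspace of $T_V L_q M$, so $D_V\expb_q|_{T_V Z}$ is injective as well. Moreover, $\expb_q(Z)\subset\pa M$, so this restriction actually lands in $T_p\pa M$. We therefore have an injective linear map
\[
  D_V(\expb_q|_Z)\colon T_V Z \hookrightarrow T_p\pa M,
\]
between spaces of dimensions $n-1$ and $n$ respectively. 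In particular $\expb_q|_Z$ is an immersion at $V$.

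Finally I would invoke the local immersion theorem: there exist local coordinates near $V\in Z$ and near $p\in\pa M$ in which $\expb_q|_Z$ takes the form $(y_1,\ldots,y_{n-1})\mapsto (y_1,\ldots,y_{n-1},0)$. In particular, there is an open neighborhood $U\subset Z$ of $V$ such that $\expb_q|_U\colon U\to\expb_q(U)$ is a diffeomorphism onto its image, and $\expb_q(U)$ is a smooth $(n-1)$-dimensional, i.e.\ $1$-codimensional, submanifold of $\pa M$. No obstacle is expected here; the only genuine content of the lemma is the injectivity of the differential of $\expb_q|_Z$, and that is exactly the non-conjugacy assumption cut down to $T_V Z$.
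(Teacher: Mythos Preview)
Your proposal is correct and takes essentially the same approach as the paper: the paper simply states ``The implicit function theorem immediately gives'' Lemma~\ref{LemmaRConj}, relying on the rephrasing (given just before the lemma) of non-conjugacy as injectivity of the differential of $Z\ni W\mapsto\expb_q(W)\in\pa M$. You have merely supplied the details the paper leaves implicit---the dimension count, the observation that $T_V Z\subset T_V L_q M$ so injectivity is inherited, and the invocation of the local immersion theorem in place of the (equivalent) implicit function theorem.
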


Recalling~\eqref{EqGeoGRedLM}, denote by
\[
  \Lightbp M := \Lightb M \cap L^+ M = L^+M \setminus (T^+_{\pa M}M\cup T\pa M)
\]
the set of future-directed light-like vectors which are inward pointing at the boundary. We then define by
\begin{equation}
\label{EqRFutLight}
  \cL_q^+ := \ol{\expb_q(\Lightbp_q M\cap\cD)}, \ \ q\in M,
\end{equation}
the future light cone from $q$. Thus, if $q\in M^\circ$, we simply have $\cL_q^+=\{q\}\cup\expb_q(L_q^+ M)$. (The set on the right hand side is already closed since $\expb_q$ is proper; this uses that there exists a global timelike function on $M$.)

\begin{thm}
\label{ThmR}
  Let $(M_j,g_j)$, $j=1,2$, be admissible Lorentzian manifolds, let $S_j\subset M_j^\circ$ be open with compact closure in $M_j$, and let $\cU_j'\Subset\cU_j\subset\pa M_j$ be open. Denote the collection of light observation sets by
  \begin{equation}
  \label{EqRObsSets}
    \cS_j := \{ \cL_q^+\cap\cU_j \colon q\in S_j \}.
  \end{equation}
  Assume:
  \begin{enumerate}
    \item \label{ItRDistinct} for any two points $q_1\neq q_2\in\ol{S_j}$, we have $\cL_{q_1}^+\cap\cU'_j\neq\cL_{q_2}^+\cap\cU'_j$.
    \item \label{ItRTame} all inextendible broken null-geodesics passing through a point in $\ol{S_j}$ are tame, see Definition~\ref{DefGeoTame};
    \item \label{ItRConj} for all $q\in S_j$ and $V\in\Lightbp_q M_j$ such that $p=\expb_q(V)\in\cU_j$, $(q,V)$ and $p$ are not conjugate.
  \end{enumerate}
  Suppose there exists a diffeomorphism $\Phi\colon\cU_1\xra{\cong}\cU_2$ such that
  \[
    \cS_2=\{\Phi(L)\colon L\in\cS_1\}.
  \]
  Then there exists a conformal diffeomorphism $\Psi\colon(S_1,g_1|_{S_1})\xra{\cong}(S_2,g_2|_{S_2})$.

  If in addition $\Phi$ is conformal, i.e.\ $\Phi^*(g_2|_{\cU_2})=f g_1|_{\cU_1}$ for some function $f\neq 0$, and preserves the time orientation, then $\Psi$ preserves the time orientation as well.
\end{thm}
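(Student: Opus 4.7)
The plan is to follow the four-stage reconstruction strategy outlined in the introduction. First, define a topology on $\cS_j$ intrinsically, using as a subbase the sets $\{L \in \cS_j : L \cap K = \emptyset\}$ for compact $K \subset \cU_j$ together with $\{L \in \cS_j : L \cap O \neq \emptyset\}$ for open $O \subset \cU_j$. By assumption~\itref{ItRDistinct} the map $\iota_j : S_j \to \cS_j$, $q \mapsto \cL_q^+ \cap \cU_j$, is a bijection; it is a homeomorphism because the broken exponential flow depends continuously on initial data (Proposition~\ref{PropGeoGExpCont}) and tameness~\itref{ItRTame} prevents pathological reflection behavior near $\ol{S_j}$ (ensuring $\cL_q^+\cap\cU_j$ varies continuously in the Hausdorff sense). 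Since $\Phi$ identifies the two intrinsically defined topological spaces $\cS_1 \cong \cS_2$, we obtain a homeomorphism $\Psi : S_1 \to S_2$.

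For the smooth structure, fix $q_0 \in S_j$ and a non-conjugate broken null-geodesic reaching $p \in \cL_{q_0}^+ \cap \cU_j$. By Lemma~\ref{LemmaRConj}, $\cL_{q_0}^+ \cap \cU_j$ is locally a smooth hypersurface near $p$. Choosing a smooth curve $\mu$ in $\cU_j$ through $p$ transverse to this hypersurface, the implicit function theorem applied to the broken exponential map produces a smooth function $x^\mu$ on a neighborhood of $q_0$ in $S_j$, assigning to $q$ the unique parameter at which $\mu$ meets $\cL_q^+$. This function is defined intrinsically from $\cS_j$ together with the smooth structure of $\cU_j$. Varying $\mu$ through sufficiently many independent directions (and possibly using several observation points $p$) produces local smooth coordinates; moreover, one verifies that any other smooth function on $S_j$ is locally $\CI$ in these $x^\mu$, so the resulting smooth structure agrees with the manifold structure of $S_j$. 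Hence $\Psi$ is a diffeomorphism.

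For the conformal class, I reconstruct the null cone $L_q M_j$ at each $q \in S_j$ from the intrinsic data. The key input is Lemma~\ref{LemmaGeoANull}: at a non-conjugate observation point $p \in \cL_q^+ \cap \cU_j$, the tangent hyperplane $T_p(\cL_q^+ \cap \cU_j)$, which is intrinsic since $\cL_q^+ \cap \cU_j$ is a smooth hypersurface near $p$, is a spacelike hyperplane in $T_p\pa M$; it determines the outgoing future null generator of the broken null-geodesic at $p$. Tracing this broken null-geodesic backward to $q$ then identifies a null direction in $T_q M_j$. Varying over all non-conjugate observation points recovers an open dense subset of the null cone at $q$, hence the full null cone by closure, and thereby the conformal class $[g_j|_{S_j}]$. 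Since $\Phi$ identifies the two families of observation sets, $\Psi$ carries $[g_1|_{S_1}]$ onto $[g_2|_{S_2}]$.

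Finally, if $\Phi$ is conformal and time orientation preserving, then a future-directed null vector $V \in L_q^+ M_j$ produces via the above reconstruction an outward-pointing future null direction at $p = \expb_q(V) \in \cU_j$; the prescribed time orientation on $\cU_j$ then distinguishes future from past in each null cone of $S_j$, propagating to a time orientation compatible under $\Psi$. The main obstacle is the middle step: extracting a clean intrinsic characterization of broken null-geodesic directions from a purely set-theoretic family of observation sets, despite multiple reflections, interior conjugate points, and possible self-intersections of $\cL_q^+$. The non-conjugacy hypothesis~\itref{ItRConj} is precisely what makes this feasible, as it guarantees that $\cL_q^+\cap\cU_j$ is locally a smooth hypersurface at the relevant observation points, so that tangent hyperplanes and transverse intersections with test curves are well defined.
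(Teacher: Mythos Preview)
Your outline for the topology and smooth structure follows the paper closely. One nontrivial point you pass over in the smooth step is why the functions $x^\mu$ actually furnish coordinates: this requires showing $\bigcap_{\mu}\ker dx^\mu|_q=\{0\}$ (the content of Lemma~\ref{LemmaRSGrad}), and then, since one cannot test linear independence of the $dx^\mu$ from the data alone, defining the smooth structure indirectly as the algebra of all continuous functions locally of the form $F(x^{\mu_0},\dots,x^{\mu_n})$.

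The genuine gap is your reconstruction of the conformal class. You propose to apply Lemma~\ref{LemmaGeoANull} to the tangent hyperplane $T_p(\cL_q^+\cap\cU_j)$ to read off the outgoing null direction at $p$, and then to ``trace this broken null-geodesic backward to $q$.'' Neither operation is available from the data. Lemma~\ref{LemmaGeoANull} needs the full null cone in $T_pM_j$, not merely in $T_p\pa M_j$; in the first part of the theorem you are given only the smooth manifold $\cU_j$, and even the conformal class of $g_j|_{\cU_j}$ (assumed only for the second part) does not determine the ambient null cone. And tracing a geodesic backward presupposes the broken null-geodesic flow on $M_j$, which is precisely what is being reconstructed. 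The paper's argument avoids this circularity by working inside $S$ itself: having recovered the smooth structure of $S$, one fixes $p\in\cL_q^\reg$, sets $L=T_p(\cL_q^+\cap\cU)$, and collects all smooth paths $r\mapsto\mu(r)$ in $S$ with $\mu(0)=q$, $p\in\cL_{\mu(r)}^+$, and $T_p(\cL_{\mu(r)}^+\cap\cU)=L$ for every $r$; the set of initial velocities $\mu'(0)$ is then exactly the null line $\R V\subset T_qM$. This characterization uses only the smooth structures of $S$ and $\cU$ together with the family $\cS$. Your time-orientation step inherits the same defect; the paper instead proves a first-variation identity (Lemma~\ref{LemmaRODep}) reducing the question to the sign of $g(p'(0),N)$, where $N$ is the future unit normal in $T_p\pa M$ to $T_p(\cL_q^+\cap\cU)$ --- a quantity computable from the conformal structure and time orientation of $\cU$ alone.
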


In fact, we will show that the map $\Psi\colon S_1\to S_2$ given by the composition of $S_1\ni q\mapsto\cL_q^+\cap\cU_1\in\cS_1$, $\Phi$, and the inverse of $S_2\ni q\mapsto\cL_q^+\cap\cU_2\in\cS_2$ is a conformal (and time orientation preserving) diffeomorphism.

\begin{rmk}
\label{RmkRTame}
  For a general admissible manifold $(M,g)$, the constructions below allow for the reconstruction of $S$ from light observation sets if the closure $\bar S$ of the set $S$ of light sources as well as the subset of the boundary on which we observe are contained in a fixed slab $M':=t^{-1}((I_-,I_+))$, $-\infty\leq I_-<I_+\leq\infty$, with the property that all inextendible broken null-geodesics passing through a point in $\bar S$ are tame for $I_-<t<I_+$. One can then define a new time function $t'$, proper as a map $M'\to\R$, such that $t'\to\pm\infty$ as $t\to I_\pm$. Replacing $M$ by $M'$, condition~\itref{ItRTame} is satisfied.
\end{rmk}

\begin{rmk}
\label{RmkRUsefulRefl}
  Theorem~\ref{ThmR} allows for the reconstruction of subsets $S\subset M^\circ$ even in certain situations in which the first intersection point of future null-geodesics from sources in $S$ with $\pa M$ is not contained in $\cU$; that is, the theorem crucially uses possibly multiply reflected broken null-geodesics. As an example, in Figure~\ref{FigRSTrans}, one can take $S\subset M^\circ$ and $\cU\subset\pa M$ to be small neighborhoods of $q$ and $p$, respectively; if $\cU$ is sufficiently small, then the shown \emph{once broken} null-geodesics are the \emph{only} broken null-geodesics starting at $q$ and intersecting $\cU$.
\end{rmk}

Assumption~\itref{ItRDistinct} is very natural: we illustrate this with two examples.

\begin{example}
\label{ExRDistinct}
  Consider the cylinder $M_0=\{(t,r\omega)\colon r<1\}\subset\R\times\R^n$, $n\geq 1$, of radius $1$, see also equation~\eqref{EqGExM0}. Let $S_1=\{(t,r\omega)\colon |t|<1/2-r,\ r<1/2\}$ and $\cU=(0,2)\times\Sph^{n-1}$, $\cU'=[1/2,3/2]\times\Sph^{n-1}$. Then Theorem~\ref{ThmR} applies: the topological, differentiable, and conformal structure of $S_1$ can be recovered from the light observation sets $\cL_q^+\cap\cU$, $q\in S_1$.
  
  Denoting by $T_a\colon(t,x)\mapsto(t+a,x)$ the time translation operator, let $S_2:=S_1\cup T_{-2}(S_1)$. Using the notation \eqref{EqRObsSets}, we then have $\cS_1=\cS_2$, hence observers in $\cU$ cannot distinguish $S_1$ and $S_2$, even though the sets $S_1$ and $S_2$ are not homeomorphic ($S_1$ is connected, $S_2$ is not); assumption~\itref{ItRDistinct} is violated. See the left panel of Figure~\ref{FigRDistinct}.
\end{example}

\begin{figure}[!ht]
\centering
  \inclfig{RDistinct}
  \caption{Illustration of examples which violate assumption~\itref{ItRDistinct} of Theorem~\ref{ThmR}. \textit{Left:} example~\ref{ExRDistinct}. The sets $S_1$ and $S_2$ of sources have the same light observation sets in $\cU$, for instance $\cL_{q_1}^+\cap\cU=\cL_{q_2}^+\cap\cU$ for $q_1=(0,0)$ and $q_2=(-2,0)$. \textit{Right:} example~\ref{ExRDistinctCl}. All light observation sets from points in $S$ are distinct in $\cU$, but as $q\to(-2,0)$, the observation set $\cL_q^+\cap\cU$ converges to $\cL_{(0,0)}^+\cap\cU$. (The light cone based at $q$ does \emph{not} refocus near $(0,0)$ in three and more spacetime dimensions due to its distorted form, contrary to the appearance in this 2-dimensional picture.)}
\label{FigRDistinct}
\end{figure}

\begin{example}
\label{ExRDistinctCl}
  Consider $M_f\subset\R\times\R^n$, $n\geq 2$, defined in equation~\eqref{EqGExMf} with $R=1$, for the function $f(t,\omega)=\chi_0(t)\chi(\omega)$, where $\chi_0(t)\equiv 0$ for $t\geq 0$ and $\chi_0(t)=\delta e^{1/t}$ for $t\leq 0$, and where $\chi\in\CI(\Sph^{n-1})$ is identically $1$ in the neighborhood $|\omega-\omega_0|<1/2$ of some fixed $\omega_0\in\Sph^{n-1}\subset\R^n$, and $\chi(\omega)=0$ for $|\omega-\omega_0|>1$. See the right panel of Figure~\ref{FigRDistinct}. For $\delta>0$ sufficiently small, $M_f$ has a strictly null-convex boundary by Lemma~\ref{LemmaGExOp}. Let
  \begin{align*}
    S &= S' \cup S'', \\
      &\qquad S' = \{ (t,r\omega)\in\R^{1+n} \colon |t|<1/2-r,\ r<1/2 \}, \\
      &\qquad S''= \{ (t,r\omega)\in\R^{1+n} \colon |t+9/4|<1/4-r,\ r<1/4 \}.
  \end{align*}
  We use the observation set $\cU=(0,2)\times\Sph^{n-1}\subset\pa M_f$ and $\cU'=[1/2,3/2]\times\Sph^{n-1}$. Theorem~\ref{ThmR} applies to the set $S'$, and in fact yields a conformal diffeomorphism (which in this case is just the identity map on $\R^{1+n}$) between $(S',-dt^2+dx^2)$ and $(S_1,-dt^2+dx^2)$ from Example~\ref{ExRDistinct}. Theorem~\ref{ThmR} can also be shown (by a perturbative argument off the case $\delta=0$) to apply to $S''$ and $\cU$ for small $\delta>0$. If one attempts to recover $S$, all light observation sets $\cL_q^+\cap\cU$, $q\in S$, are distinct. However, we have
  \[
    \lim_{t\to -2} \cL_{(t,0)}\cap\cU \to \cL_{(0,0)}\cap\cU
  \]
  as smooth submanifolds of $\cU$. That is, separated points can have very similar light observation sets. This motivates the stronger hypothesis that light observation sets from points in the \emph{closure} $\bar S$ are distinct.
\end{example}

Fix an admissible Lorentzian manifold $(M,g)$, and sets $S\subset M$ and $\cU'\Subset\cU\subset\pa M$ satisfying the assumptions of Theorem~\ref{ThmR}, and denote $\bar\cS=\{\cL_q^+\cap\cU\colon q\in\bar S\}$. By assumption~\itref{ItRDistinct}, the map
\begin{equation}
\label{EqRLMap}
  L\colon \bar S\ni q\mapsto\cL_q^+\cap\cU\in\bar\cS
\end{equation}
is bijective, as is its restriction to $S$ as a map $S\to\cS$. There exists a unique topological, smooth, and conformal structure on $\cS$, defined by pushing these structures forward from $S\subset M$ to $\cS$ using $L$, which makes this map a conformal diffeomorphism. In order to prove Theorem~\ref{ThmR}, we need to show that we can uniquely recover these structures merely from the knowledge of the collection $\cS$ of subsets of the manifold $\cU$ and the conformal class of $g|_{\cU}$. \emph{From now on, we identify the set $S$ of sources and the set $\cS$ of light observation sets using the map~\eqref{EqRLMap}, and use the two interchangeably.}

The proof of Theorem~\ref{ThmR} will occupy the remainder of this section: in \S\ref{SubsecRT}, we show how to recover the topology of $\cS$, in \S\ref{SubsecRS} we recover the smooth structure, in \S\ref{SubsecRC} the conformal structure, and finally in \S\ref{SubsecRO} the time orientation of $S$.

\subsection{Topology}
\label{SubsecRT}

We define a topology $\cT$ on $\cS$ by using the collection of sets of the form
\begin{align*}
  U_O &:= \{ L\in\cS \colon L\cap O\neq\emptyset \}, \ \ O\subset\cU\ \text{open},\\
  U^K &:= \{ L\in\cS \colon L\cap K=\emptyset \}, \ \ K\subset\cU\ \text{compact},
\end{align*}
as a subbasis. Note that the definition of $\cT$ only involves the \emph{set} $\cS$ and the a priori known topology of $\cU$.

\begin{prop}
\label{PropRT}
  The topology $\cT$ is equal to the subspace topology $\cT_M$ of $S\subset M$.
\end{prop}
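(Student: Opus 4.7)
The plan is to show that $L\colon S\to(\cS,\cT)$, $q\mapsto\cL_q^+\cap\cU$, is a homeomorphism when $S$ carries the subspace topology from $M$, by extending to a continuous bijection $\bar L\colon\ol{S}\to\ol{\cS}:=\{\cL_q^+\cap\cU:q\in\ol{S}\}$ on the compact set $\ol{S}$, showing the analogously defined topology $\ol\cT$ on $\ol\cS$ is Hausdorff, and invoking the standard fact that a continuous bijection from a compact space to a Hausdorff space is a homeomorphism. Since the subspace topology on $\cS\subset\ol\cS$ agrees with $\cT$ (the subbasis elements match verbatim), the restriction $L=\bar L|_S$ will then be a homeomorphism, giving $\cT=\cT_M$.

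For the continuity of $\bar L$, I will check that $\bar L^{-1}(U_O)$ and $\bar L^{-1}(U^K)$ are open in $\ol{S}$. For $U_O$: given $q_0\in\bar L^{-1}(U_O)$, the definition of $\cL_{q_0}^+$ as a closure in~\eqref{EqRFutLight} provides $V_0\in\Lightbp_{q_0}M\cap\cD$ with $\expb_{q_0}(V_0)\in O$; the continuity of $\expb$ on $\cD$ (Proposition~\ref{PropGeoGExpCont}\itref{ItGeoGExpContOpen}) together with openness of $\Lightbp M\subset\Lightb M$ and of $O$ yields a neighborhood of $q_0$ in $\ol{S}$ with the same property. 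For $U^K$, I show the complement $\{q\in\ol{S}:\cL_q^+\cap K\neq\emptyset\}$ is sequentially closed: given $q_n\to q_0$ with $p_n\in\cL_{q_n}^+\cap K$, compactness of $K$ extracts $p_n\to p\in K$ along a subsequence; after approximating $p_n$ by points in the image of $\expb_{q_n}$, I write $p_n=\expb_{q_n}(s_n V_n)$ with $|V_n|_{g^+}=1$ and $s_n>0$. The bound $dt(V)\geq c|V|_{g^+}$ for future causal $V$ over a compact set of $t$-values (cf.~\eqref{EqGeoGMaxBound}), combined with the boundedness of $t(q_n)$ and $t(p_n)$, keeps $s_n$ bounded. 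Extracting $V_n\to V_\infty\in L^+_{q_0}M$ with $|V_\infty|_{g^+}=1$ and $s_n\to s$, assumption~\itref{ItRTame} (tameness) ensures $(q_0,s V_\infty)\in\cD$, and continuity of $\expb$ delivers $p=\expb_{q_0}(s V_\infty)\in\cL_{q_0}^+$, contradicting $\cL_{q_0}^+\cap K=\emptyset$.

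For the Hausdorff property of $\ol\cT$, take distinct $\cL_1,\cL_2\in\ol\cS$ with preimages $q_1\neq q_2\in\ol{S}$. Assumption~\itref{ItRDistinct} supplies $p\in\cU'$ in exactly one of them; say $p\in\cL_1\setminus\cL_{q_2}^+$. Since $\cL_{q_2}^+$ is closed in $M$ by its very definition and $\cU'\Subset\cU$, I can choose an open $O\subset\cU$ with $p\in O$, $\ol{O}$ compact in $\cU$, and $\ol{O}\cap\cL_{q_2}^+=\emptyset$. Then $\cL_1\in U_O$, $\cL_2\in U^{\ol{O}}$, and $U_O\cap U^{\ol{O}}=\emptyset$ since $O\subset\ol{O}$; the symmetric case is handled identically. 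Compact-to-Hausdorff then gives $\bar L$ homeomorphism, whence the proposition follows.

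The main obstacle is the limit step for $U^K$ at $q_0\in\ol{S}\cap\pa M$: one must rule out both an instantaneous exit from $M$ along a tangent null direction (where $V_\infty\notin\Lightbp_{q_0}M$ and the broken null-geodesic from $q_0$ with velocity $V_\infty$ is not well-defined) and an infinite accumulation of reflections in finite parameter. Tameness precludes the latter, while the former requires a further approximation of $V_\infty$ by inward-pointing null directions combined with null-convexity of $\pa M$, which forces nearby null rays to curve back into $M^\circ$ so that the limit point $p$ can still be expressed as a limit of $\expb_{q_0}(W_k)$ for a sequence $W_k\in\Lightbp_{q_0}M\cap\cD$. Without these structural assumptions, the identification $\cT=\cT_M$ can genuinely fail.
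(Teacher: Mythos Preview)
Your proof is correct and takes a genuinely different route from the paper's. The paper proves both inclusions $\cT\subset\cT_M$ and $\cT_M\subset\cT$ directly: the first via the same continuity arguments you give, and the second by constructing, for each $q\in U\in\cT_M$, an explicit nested sequence of $\cT$-open sets $U_j=U^{K_{1/j}}\cap\bigcap_i U_{O_{i,1/j}}$ that squeeze down to $\cL_q^+\cap\ol{\cU'}$, then using assumption~\itref{ItRDistinct} to derive a contradiction if $U_j\not\subset U$ for all $j$. Your compact-to-Hausdorff argument replaces this construction with a clean abstract step: once $\bar L$ is continuous and $\ol\cT$ is Hausdorff, the homeomorphism is automatic. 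Your Hausdorff argument is a particularly transparent use of assumption~\itref{ItRDistinct}, and the observation that the subbasis restricts verbatim from $\ol\cS$ to $\cS$ makes the final restriction step immediate. The paper's approach, by contrast, is more constructive and stays entirely within $S$ for the $\cT\subset\cT_M$ direction, which avoids working on $\ol S\cap\pa M$ at that stage.

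Regarding the boundary obstacle you flag: this is the one genuinely delicate point, and the paper faces essentially the same issue in its own argument (it must show that $\cL_{\bar q}^+\cap\ol{\cU'}$ equals the set of limit points of $\cL_{q_j}^+\cap\ol{\cU'}$ when $\bar q\in\pa M$, which it dispatches as ``a simple approximation argument'' invoking the closure in definition~\eqref{EqRFutLight}). Your sketch is in the right spirit, though the phrase ``forces nearby null rays to curve back into $M^\circ$'' is slightly off: null-convexity (Lemma~\ref{LemmaGeoABdy}) says tangent null-geodesics stay \emph{outside} $M^\circ$, so the correct picture is rather that the first reflection point $\sfp_1(q_n,V_n)$ of $\gamma_n$ converges to $q_0$, and one continues the limiting analysis from there with the reflected velocity (which is strictly inward if $V_\infty$ was strictly outward; the purely tangent case then needs the closure in~\eqref{EqRFutLight}). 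Since the paper itself does not spell this out, your treatment is at the same level of rigor.
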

\begin{proof}
  \ul{$\cT\subset\cT_M$}: We show that sets of the form $U_O$ and $U^K$ and open in $S\subset M$. If $O=\emptyset$, then $U_O=\emptyset$ is open. If on the other hand $O\neq\emptyset$, let $L=\cL_q^+\cap\cU\in\cS$, $q\in S$, and suppose $V\in\Lightbp_q M$ is such that $\expb_q(V)\in O$; in the notation of Proposition~\ref{PropGeoGExpCont}, we have $V\in B_{k,-}$ for some $k\in\N$, i.e.\ $p$ is the $k$-th intersection of the broken null-geodesic with initial data $(q,V)$ with the boundary $\pa M$, and $\sfp_k(V)=p$. By part~\itref{ItGeoGExpCl} of that proposition, $\sfp_k(W)$ depends continuously on $q'\in M$, $W\in\Lightb_{q'} M$, hence $\sfp_k(W)\in O$ when $(q',W)$ is close to $(q,V)$. This shows that $\cL_{q'}^+\cap O\neq\emptyset$, as desired.

  For $K\subset\cU$ compact, we claim that $S\setminus U^K$ is closed in the subspace topology of $M$: if $q_j\in S$, $\lim_{j\to\infty}q_j=:\bar q\in S$, and $V_j\in\Lightbp_{q_j}M$, $p_j:=\expb_{q_j}(V_j)\in K$, then, passing to a subsequence if necessary, we may assume that $p_j\to\bar p\in K$ as $j\to\infty$. Moreover, it follows from the proof of Proposition~\ref{PropGeoGMax}, see in particular the estimates~\eqref{EqGeoGMaxBound} and \eqref{EqGeoGMaxLengthBound}, that $|V_j|_{g^+}$ remains in a compact subset of $(0,\infty)$, hence we may assume that $(q_j,V_j)\to(\bar q,\bar V)\in\Lightbp M$. But by Proposition~\ref{PropGeoGExpCont}, we then have $\bar p=\expb_{\bar q}(\bar V)\in K$, hence $\bar q\not\in U^K$, as claimed.

  \ul{$\cT_M\subset\cT$}: we need to prove that for any $\cT_M$-open set $U\subset S$, every $q\in U$ has a $\cT$-open neighborhood which is contained in $U$. To see this, denote $L:=\cL_q^+\cap\ol{\cU'}$, and fix a compact set $K'$ with $\cU'\Subset K'\subset\cU$; for any $\eps>0$, let $K_\eps:=K'\setminus\bigl\{p\in\pa M\colon d_{g^+}(p,L)<\eps\bigr\}$, where $d_{g^+}(p,L)=\inf_{p'\in L}d_{g^+}(p,p')$ is defined using the Riemannian distance function of $g^+$. Further, pick a countable dense subset $\{p_i\colon i\in\N\}\subset\cL_q^+\cap\ol{\cU'}$, and let $O_{i,\eps}=\{p\in\cU\colon d_{g^+}(p,p_i)<\eps\}$. By the compactness of $L$, for each $\eps>0$, there exists a finite number $N(\eps)$ such that $L\subset\bigcup_{i=1}^{N(\eps)} O_{i,\eps}$. Consider now the nested sequence of $\cT$-open neighborhoods
  \[
    U_j := U^{K_{1/j}} \cap \bigcap_{i=1}^{N(1/j)} U_{O_{i,1/j}}
  \]
  of $\cL^+_q\cap\cU$.
  
  Suppose that $U_j\not\subset U$ for all $j$, then we can pick a sequence $q_j\in U_j\setminus U\subset S$, and we may assume without loss that $q_j\to\bar q\in\bar S$. It then follows that $\cL_{\bar q}^+\cap\ol{\cU'}$ is equal to the set of limit points of the sequence of sets $\cL_{q_j}^+\cap\ol{\cU'}$; for $\bar q\not\in\pa M$, this is a consequence of Proposition~\ref{PropGeoGExpCont}, while for $\bar q\in\pa M$, recalling the definition~\eqref{EqRFutLight}, this follows from a simple approximation argument. By definition of the sets $U^{K_{1/j}}$, we infer that $\cL_{\bar q}^+\cap\ol{\cU'}\subset L$. If this were a strict inclusion (of \emph{closed} sets), we could find $i_0\in\N$ with $p_{i_0}\in L\setminus\cL_{\bar q}^+$ and $j_0\in\N$ such that $O_{i_0,1/j}\subset L\setminus\cL_{\bar q}^+$ for all $j\geq j_0$. However, by definition of $U_{O_{i_0,1/j}}$, there exists, for all $j\geq j_0$, a point $x_j\in\cL_{q_j}^+\cap O_{i_0,1/j}$; hence $p_{i_0}=\lim_{j\to\infty}x_j$ is a limit point of the sets $\cL_{q_j}^+$, hence contained in $\cL^+_{\bar q}$, which is a contradiction. Therefore, $\cL_{\bar q}^+\cap\ol{\cU'}=L=\cL_q^+\cap\ol{\cU'}$. By assumption~\itref{ItRDistinct} of Theorem~\ref{ThmR}, this implies $\bar S\setminus U\ni\bar q=q\in U$. This contradiction shows that $U_j\subset U$ for sufficiently large $j$, and the proof is complete.
\end{proof}

\begin{example}
\label{ExREarliest}
  A key construction in \cite{KurylevLassasUhlmannSpacetime} is the earliest observation time along timelike curves in the observation region. We give an example to indicate why, without modifications as in \S\ref{SubsecRS} below, this is not as useful in the present setting. Consider the cylinder $M_0\subset\R^{1+n}$, $n\geq 1$, with radius $1$, see equation~\eqref{EqGExM0}, and consider the set
  \[
    S = \{ (t,r\omega)\in\R^{1+n} \colon |t+1|<1/2-r,\ r<1/2 \}.
  \]
  We observe in the set $\cU=(0,3)\times\Sph^{n-1}$. Thus,
  \[
    \cL^+_{(t,0)}\cap\cU
      =\begin{cases}
         \{t+3\}\times\Sph^{n-1}, & t\leq -1, \\
         \{t+1,t+3\}\times\Sph^{n-1}, & t>-1.
       \end{cases}
  \]
  Correspondingly, the earliest observation time of $\cL^+_{(t,0)}$ along the timelike curve $\gamma(s)=(s,\omega_0)$ (with $\omega_0\in\Sph^{n-1}$ fixed) within $\pa M_0$, defined by $s_\gamma(t):=\inf\{s\colon\gamma(s)\in\cL^+_{(t,0)}\}$, is discontinuous, namely $s_\gamma(t)=t+3$ for $t\leq -1$, and $s_\gamma(t)=t+1$ for $t>-1$.
\end{example}

\subsection{Smooth structure}
\label{SubsecRS}

With the topology of $\cS$ at our disposal, the space of continuous maps from $\cS$ into any topological space is well-defined. In order to recover the structure of $\cS$ as an (open) smooth manifold, we will, in a neighborhood of any point $q\in S$, define a coordinate system by using `earliest observation times' along suitable curves passing through points where $\cL^+_q\cap\cU$ is a smooth submanifold.

\begin{lemma}
\label{LemmaRSFinite}
  For $q\in M$ and $p\in\pa M$, the number of different vectors $V\in\Lightb_q M$ for which $\expb_q(V)=p$ is finite.
\end{lemma}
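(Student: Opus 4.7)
The plan is to show that $\expb_q^{-1}(p)$ is a compact and discrete subset of $\cD_q:=\cD\cap\Lightb_q M$, from which finiteness follows. Throughout I work under the standing assumptions of Theorem~\ref{ThmR}, so in particular the broken null-geodesics through $q\in\ol S$ are tame, and (for $p\in\cU$, which is the case of interest for the subsequent reconstruction) the non-conjugacy assumption~\itref{ItRConj} applies.

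Boundedness of $|V|_{g^+}$ on $\expb_q^{-1}(p)$ follows from a $t$-function argument essentially identical to the one in the proof of Proposition~\ref{PropGeoGMax}: the broken null-geodesic $\gamma(s)=\expb_q(sV)$, $s\in[0,1]$, has $t\circ\gamma$ strictly monotone between $t(q)$ and $t(p)$, hence remains in the fixed compact slab $t^{-1}([\min,\max])$. On this slab, $|dt(W)|\geq c|W|_{g^+}$ for null $W$ (with $c>0$ uniform), and by the estimate~\eqref{EqGeoGMaxLengthBound} one has $|\gamma'(s)|_{g^+}\geq 1/(|V|_{g^+}^{-1}+Cs)$; integrating gives $|t(p)-t(q)|\geq(c/C)\log(1+C|V|_{g^+})$ and hence a uniform upper bound on $|V|_{g^+}$. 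To show closedness of the preimage in $\cD_q$, let $V_i\in\expb_q^{-1}(p)$ be distinct and extract $V_i\to V_\infty\in\Lightb_q M$. Tameness applied to the inextendible broken null-geodesic through $(q,V_\infty)$ forbids reflections from accumulating at any parameter with bounded $t$-value, so this geodesic is defined past parameter~$1$, i.e.\ $V_\infty\in\cD_q$. Moreover, Proposition~\ref{PropGeoGExpCont}\itref{ItGeoGExpCl}--\itref{ItGeoGExpContOpen} shows the reflection count on $(0,1)$ is locally bounded on $\cD$, so after passing to a subsequence all $V_i$ may be assumed to lie in a common closed stratum $\cD_k$; continuity of $\expb_q|_{\cD_k}$ then yields $\expb_q(V_\infty)=p$. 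This makes $\expb_q^{-1}(p)$ compact.

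For discreteness, note that every $V\in\expb_q^{-1}(p)$ lies in the smooth codimension-$1$ submanifold $Z:=(\expb_q)^{-1}(\pa M)\subset\Lightb_q M$ (using the transversality of broken null-geodesics to $\pa M$, as established in the proof of Proposition~\ref{PropGeoGExpCont}). For $p\in\cU$, assumption~\itref{ItRConj} combined with Lemma~\ref{LemmaRConj} shows that $\expb_q|_Z$ is, near each such $V$, a diffeomorphism onto a hypersurface of $\pa M$ through $p$; since $\expb_q^{-1}(p)\subset Z$, every point of the preimage is thereby isolated. Compactness plus discreteness yields finiteness. The main obstacle in this plan is the closedness step: one must rule out the possibility $V_\infty\in\pa\cD_q$, which would correspond to infinitely many reflections of the limiting broken null-geodesic accumulating at a finite parameter in $(0,1]$; this is precisely the pathology precluded by tameness.
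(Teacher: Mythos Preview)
Your proof is correct. The paper takes a somewhat more economical route: rather than bounding $|V|_{g^+}$ explicitly via the $t$-function estimate and then establishing closedness of the preimage in $\cD_q$ using tameness, it passes directly to the compact quotient $(\Lightb_q M\setminus\{0\})/\R_+$ of null directions. The enabling observation is that $\expb_q(sV)=p$ can hold for at most one value of $s$ (by strict monotonicity of $t\circ\gamma$), so distinct preimage vectors correspond to distinct rays; finiteness then reduces to showing that no ray $\R_+V$ mapping to $p$ is an accumulation point of other such rays, which the paper obtains from Lemma~\ref{LemmaRConj}. Your approach, by contrast, separates the boundedness and closedness steps and makes the role of tameness explicit in guaranteeing that the limit $V_\infty$ remains in $\cD_q$; the paper's argument is terser and leaves this point implicit. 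Both proofs rest on assumption~\itref{ItRConj} via Lemma~\ref{LemmaRConj} for the local injectivity that gives discreteness. What your version buys is transparency about which hypotheses of Theorem~\ref{ThmR} are actually used at each stage; what the paper's buys is brevity, since compactness of the sphere of directions comes for free.
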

\begin{proof}
  Note that all such $V$ have to be non-zero. If $V$ is such a vector, then $\expb_q(s V)=p$ only holds for $s=1$. Thus, it suffices to prove that there are only finitely many rays in $\Lightb_q M\setminus\{0\}$ whose image under $\expb_q$ passes through $p$. Since $(\Lightb_q M\setminus\{0\})/\R_+$ is a compact space, it suffices to prove that every ray $\R_+ V$ with $\expb_q(V)=p$ has a punctured neighborhood consisting of rays whose image under $\expb_q$ does not contain $p$. But this follows from Lemma~\ref{LemmaRConj} (using assumption~\itref{ItRConj} of Theorem~\ref{ThmR}).
\end{proof}

\begin{lemma}
\label{LemmaRSTransversal}
  Suppose that $q\in S$ and $p\in\pa M$ are such that for all $V\in\Lightbp_q M$ satisfying $\expb_q(V)=p$, $(q,V)$ and $p$ are not conjugate. Then there exist a neighborhood $O\ni p$, an integer $N<\infty$, and $N$ pairwise transversal smooth codimension $1$ submanifolds $L_j$ ($j=1,\ldots,N$) of $O$ such that $\cL_q^+\cap O=\bigcup_{j=1}^N L_j$.
\end{lemma}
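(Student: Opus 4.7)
The plan is to enumerate the finitely many broken null-geodesics from $q$ reaching $p$, use non-conjugacy to describe $\cL_q^+$ near $p$ as a finite union of smooth local hypersurfaces of $\pa M$, and then verify pairwise transversality by combining the infinitesimal picture in Lemma~\ref{LemmaGeoANull} with the strict $t$-monotonicity of broken null-geodesics issuing from $q$.

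By Lemma~\ref{LemmaRSFinite}, the set $\{V_1,\dots,V_N\}:=\{V\in\Lightbp_q M\colon\expb_q(V)=p\}$ is finite. A first key observation is that $t\circ\gamma$ is strictly increasing along any broken null-geodesic $\gamma$ issuing from $q$: since $dt$ is past timelike and $\gamma'$ is future causal (the reflection $\rho$ preserves time orientation), $dt(\gamma')>0$. In particular no broken null-geodesic from $q$ can visit $p$ twice, so the rays $\R_+ V_k$ are pairwise distinct. Applying Lemma~\ref{LemmaRConj} at each $V_k$ yields a neighborhood $\cN_k$ of $V_k$ in $Z:=\expb_q^{-1}(\pa M)$ such that $L_k:=\expb_q(\cN_k)$ is a smooth codimension $1$ submanifold of $\pa M$ containing $p$. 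I then shrink an open neighborhood $O\ni p$ so that each $L_k\cap O$ is a single embedded hypersurface and $\cL_q^+\cap O=\bigcup_k(L_k\cap O)$; the nontrivial inclusion $\subset$ follows by contradiction, lifting a sequence $p_n\to p$ in $\cL_q^+\cap O\setminus\bigcup L_k$ to $V'_n\in\Lightbp_q M$ with $\expb_q(V'_n)\to p$, invoking the quantitative $|\gamma'|_{g^+}$-bounds of Proposition~\ref{PropGeoGMax} to extract a convergent subsequence $V'_n\to V^*$, and using the continuity of $\expb_q$ from Proposition~\ref{PropGeoGExpCont} to conclude $V^*=V_k$ for some $k$, which would force $p_n\in L_k$ for all large $n$.

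For pairwise transversality at $p$, set $\gamma_k(s):=\expb_q(sV_k)$ and let $W_k:=\gamma_k'(1-)\in T_pM$ denote the outward pointing incoming velocity. By Definition~\ref{DefRConj}, the limit differential $D_{V_k}\expb_q$ is injective on $T_{V_k}\Lightbp_q M=V_k^\perp$; since Jacobi fields along null geodesics preserve $g$-orthogonality to the velocity and the reflection map $\rho$ of~\eqref{EqGeoARefl} is a $g$-isometry of $T_pM$ at each boundary reflection, this differential maps $V_k^\perp$ bijectively onto the null hyperplane $W_k^\perp\subset T_pM$. Restricting to $T_{V_k}Z=(D_{V_k}\expb_q)^{-1}(T_p\pa M)$ gives the identity $T_pL_k=W_k^\perp\cap T_p\pa M$. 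By Lemma~\ref{LemmaGeoANull}, the correspondence between spacelike hyperplanes $S\subset T_p\pa M$ and future-directed outward pointing null rays $\R_+ W\subset T_pM$, given by $S=W^\perp\cap T_p\pa M$, is bijective; hence $T_pL_i=T_pL_j$ is equivalent to $\R_+W_i=\R_+W_j$. If the latter held for some $i\neq j$, the backward broken null-geodesics with initial data $(p,-W_i)$ and $(p,-W_j)$ would be reparametrizations of one another (common initial point, parallel initial directions) and both terminate at $q$; but the affine parametrization sending $q$ to $0$ and $p$ to $1$ is unique, forcing $\gamma_i=\gamma_j$ and hence $V_i=V_j$, a contradiction. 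The tangent spaces $T_pL_j$ are therefore pairwise distinct codimension $1$ subspaces of $T_p\pa M$, which is precisely pairwise transversality.

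The main obstacle I anticipate is the transversality step: it relies on the subtle combination of the infinitesimal picture of boundary light cones afforded by Lemma~\ref{LemmaGeoANull} with the rigidity coming from $t$-monotonicity, which is what prevents two of the vectors $V_k$ from lying on a common ray. The compactness argument needed to confine $\cL_q^+\cap O$ inside $\bigcup_k L_k$ is largely routine once one invokes the quantitative bounds of Proposition~\ref{PropGeoGMax} and the continuity of $\expb_q$ on $\cD$ from Proposition~\ref{PropGeoGExpCont}.
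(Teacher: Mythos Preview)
Your proof is correct and follows essentially the same route as the paper's: finiteness via Lemma~\ref{LemmaRSFinite}, local smoothness of each $L_k$ via Lemma~\ref{LemmaRConj}, and transversality by reading off the outgoing null ray $\R_+W_k$ from $T_pL_k$ via Lemma~\ref{LemmaGeoANull} and then tracing the broken geodesic backward from $p$ to force $V_i=V_j$. You add more detail than the paper in two places---the compactness argument confining $\cL_q^+\cap O$ to $\bigcup_k L_k$, and the Jacobi-field justification of $T_pL_k=W_k^\perp\cap T_p\pa M$ (which the paper simply asserts, and later recovers as the special case $q'(0)=0$ of Lemma~\ref{LemmaRODep})---but the logical skeleton is identical.
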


See Figure~\ref{FigRSTrans}.

\begin{figure}[!ht]
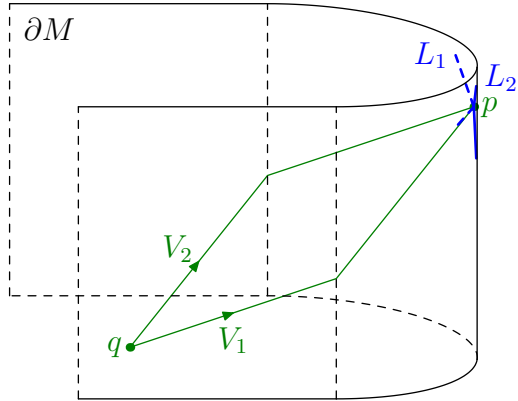

\centering
  \inclfig{RSTrans}
  \caption{Two different future-directed light rays from $q$ intersecting the boundary at the same point $p$. Under the assumption that $(q,V_j)$ and $p$ are not conjugate for $j=1,2$, the light observation set $\cL_q^+\cap\pa M$ is, near $p$, the union of two transversally intersecting codimension $1$ submanifolds $L_1,L_2\subset\pa M$.}
\label{FigRSTrans}
\end{figure}

\begin{proof}[Proof of Lemma~\ref{LemmaRSTransversal}]
  Let $\{V_1,\ldots,V_N\}:=(\expb_q)^{-1}(p)\cap\Lightbp_q M$. As in Lemma~\ref{LemmaRConj}, there exists a smooth codimension $1$ submanifold $Z_j\subset\Lightb_q M$ containing $V_j$ such that $L_j:=\expb_q(Z_j)$ is a smooth codimension $1$ submanifold of $\pa M$, which moreover is spacelike by Lemma~\ref{LemmaGeoANull}. Furthermore, by construction, $\bigcup_{j=1}^N L_j=\cL_q^+\cap O'$ for a sufficiently small neighborhood $O'$ of $p$.
  
  If $N=1$, we can take $O=O'$, and the proof is complete. If $N\geq 2$, we first establish the transversality of $L_j$ and $L_k$, $1\leq j\neq k\leq N$, at $p$. Let $\gamma_i(s)=\expb_q(s V_i)$, $i=j,k$; we then observe that $T_p L_i$ uniquely determines an outward lightlike ray through $p$, which is necessarily equal to the ray $\R_+W_i$, where $W_i:=\gamma_i'(1-0)\in (T_{\pa M}^+ M)_p$ for $i=j,k$; this uses Lemma~\ref{LemmaGeoANull}. Thus, if $T_p L_j=T_p L_k$, then $c W_j=W_k$ for some $c>0$. But then
  \begin{equation}
  \label{EqRSTransversalEq}
    \expb_p(-c s W_j)=\expb_p(-s W_k)\in\gamma_j([0,1])\cap\gamma_k([0,1]),\ \ s\in[0,1].
  \end{equation}
  Now for $s=1$, we have $\expb_p(-W_k)=q$, but we also have $\expb_p(-W_j)=q$ by construction. Thus, $c=1$, and by differentiating the equality in~\eqref{EqRSTransversalEq} in $s$ at $s=1$, we find $-V_j=-V_k$, contradicting $j\neq k$. The conclusion of the Lemma follows if we take $O\subset O'$ to be a sufficiently small neighborhood of $p$.
\end{proof}

Thus, away from a finite union of smooth codimension $2$ submanifolds of $\pa M$, $\cL_q^+\cap O$ is a smooth codimension $1$ submanifold of $\pa M$. Define the smooth part
\begin{equation}
\label{EqRSLReg}
\begin{split}
  \cL_q^\reg := \big\{ p \in \cL_q^+\cap\cU &\colon \text{there exists}\ p\in O\subset\pa M\ \text{open such that}\ \cL_q^+\cap O\ \text{is a} \\
  &\qquad \text{smooth connected codimension 1 submanifold of}\ O \bigr\}.
\end{split}
\end{equation}
(In the notation of the Lemma~\ref{LemmaRSTransversal}, we have $\cL_q^\reg\cap U=(\cL_q^+\cap U)\setminus\bigcup_{j\neq k}(L_j\cap L_k)$.)

Fix now any $q\in S$, and denote by $\mu\colon[-1,1]\to\cU$ a smooth curve in $\pa M$ which is transversal to $\cL_q^\reg$, with $\mu'(s)\neq 0$ for $s\in[-1,1]$, and such that $\mu(0)\in\cL_q^\reg$ and $\mu(s)\not\in\cL^+_q$ for $s\neq 0$. Consider the set
\[
  R'(\mu):=\{q'\in S\colon \#(\cL^\reg_{q'}\cap\mu([-1,1]))=1,\ \text{with transversal intersection}\}.
\]
While $R'(\mu)$ is neither open nor closed in general, it does contain an open neighborhood of $q$. Therefore, the set
\[
  R(\mu) := \bigcup_{\genfrac{}{}{0pt}{}{R\subset R'(\mu)}{\text{open in}\ S}} R
\]
is a non-empty open neighborhood of $q$. By part~\itref{ItGeoGExpCl} of Proposition~\ref{PropGeoGExpCont}, the earliest observation time
\[
  x^\mu \colon R(\mu)\ni q \mapsto s \in [-1,1],\ \ \text{where}\ \mu(s)\in\cL^+_q,
\]
is a smooth function on $R(\mu)$, and $x^\mu(q)=0$. (We stress that $R(\mu)$ and $x^\mu$ are well-defined given the topology of $S$ and the smooth structure of $\cU$.) We aim to show that suitable families of such functions $x^\mu$ give local coordinates near $q$. The key step is to show that there is always a large supply of curves $\mu$ for which $x^\mu$ is non-degenerate at $q$; more precisely:

\begin{lemma}
\label{LemmaRSGrad}
  Fix $q\in S$, and let
  \begin{align*}
    \frakM:=\{\mu\colon[-1,1]\xra{\CI}\cU \colon \mu\ &\text{is transversal to}\ \cL_q^+,\ \mu'(s)\neq 0\ \text{for}\ s\in[-1,1], \\
      & \qquad\qquad \mu(0)\in\cL_q^\reg,\ \mu(s)\not\in\cL_q^+\ \text{for}\ s\in[-1,1]\}.
  \end{align*}
  Then
  \[
    \bigcap_{\mu\in\frakM} \ker(dx^\mu|_q) = \{0\} \subset T_q M.
  \]
\end{lemma}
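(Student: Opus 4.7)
The plan is to show, for each $\mu\in\frakM$, that $\ker(dx^\mu|_q)=V_0^\perp:=\{X\in T_q M:g_q(X,V_0)=0\}$, where $V_0\in\Lightbp_q M$ is the unique initial velocity of the broken null-geodesic $\gamma$ from $q$ to $p_0:=\mu(0)$ (uniqueness because $p_0\in\cL_q^\reg$, cf.\ Lemma~\ref{LemmaRSTransversal}). The conclusion then follows from a spanning argument for the null cone as $\mu$ varies.

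For the implicit description of $x^\mu$, observe that by assumption~\itref{ItRConj} together with the smoothness statements in Proposition~\ref{PropGeoGExpCont}, the incidence set
\[
  \cF:=\{(q',p)\in M\times\pa M:p\in\cL_{q'}^+\ \text{via a broken null-geodesic close to}\ \gamma\}
\]
is a smooth codimension-$1$ submanifold of $M\times\pa M$ near $(q,p_0)$: parameterize $\cF$ by the $2n$-dimensional manifold of pairs $(q',V)$ with $V\in\Lightb_{q'}M$ close to $V_0$ and $\expb_{q'}(V)\in\pa M$; the resulting map to $M\times\pa M$ is a locally injective immersion by non-conjugacy (cf.\ Lemma~\ref{LemmaRConj}). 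Let $F$ be a local defining function. Since $\mu$ is transversal to $\cL_q^\reg=\{p:F(q,p)=0\}$ at $p_0$, differentiating $F(q',\mu(x^\mu(q')))=0$ at $q'=q$ yields
\[
  dx^\mu|_q(X)=-\frac{(\pa_{q'}F)|_{(q,p_0)}(X)}{(\pa_p F)|_{(q,p_0)}(\mu'(0))},
\]
so $\ker(dx^\mu|_q)=\ker((\pa_{q'}F)|_{(q,p_0)})$.

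To identify this kernel with $V_0^\perp$, note that $X$ lies in it if and only if $(X,0)\in T_{(q,p_0)}\cF$, i.e., $X$ is tangent at $q$ to $\mathcal P:=\{q'\in M:p_0\in\cL_{q'}^+\ \text{via a broken null-geodesic close to}\ \gamma\}$. By the same IFT setup, $\mathcal P$ is a smooth hypersurface through $q$; moreover, since $q\in M^\circ$ and all reflections of $\gamma$ occur at affine parameters strictly between $0$ and $1$, a neighborhood of $q$ in $\mathcal P$ is a piece of the classical past null cone of a future null-geodesic from $q$ in direction $V_0$. Thus $\mathcal P$ is locally a null hypersurface with null generator $\R V_0$ through $q$, and by the standard characterization of tangent spaces of null hypersurfaces (the interior counterpart of Lemma~\ref{LemmaGeoANull}), $T_q\mathcal P=V_0^\perp$. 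Hence $V_0^\perp\subseteq\ker((\pa_{q'}F)|_{(q,p_0)})$, with equality by dimension count since $\mathcal P\neq M$.

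Finally, let $\Omega:=\{V\in\Lightbp_q M:\expb_q(V)\in\cL_q^\reg\cap\cU\}$, which is open in $\Lightbp_q M$ by Proposition~\ref{PropGeoGExpCont} and nonempty for any $q\in S$ with observations in $\cU$. For each $V_0\in\Omega$, a sufficiently short transversal curve $\mu$ through $\expb_q(V_0)$ lies in $\frakM$, so $\bigcap_{\mu\in\frakM}\ker(dx^\mu|_q)\subseteq\bigcap_{V_0\in\Omega}V_0^\perp$. If $X\neq 0$ satisfies $g_q(X,V_0)=0$ for all $V_0\in\Omega$, then $\Omega$ lies in the linear hyperplane $X^\perp\subset T_q M$; but for $n\geq 2$ the future null cone $L_q^+ M$ (a smooth $n$-dimensional quadric in $T_q M$) meets any linear hyperplane in a set of dimension at most $n-1$, contradicting the nonemptiness and openness of $\Omega$ in $L_q^+ M$ (the case $n=1$ is handled by noting that both connected components of $L_q^+ M$ must be represented in $\Omega$). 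The main obstacle is the identification $T_q\mathcal P=V_0^\perp$: despite the possibility of multiple reflections along $\gamma$, near $q$ the past broken null cone behaves like a classical null hypersurface, and pinning this down rigorously requires combining the implicit function theorem with the smoothness of $\expb$ across the boundary reflection strata $\cD_k$ from Proposition~\ref{PropGeoGExpCont}.
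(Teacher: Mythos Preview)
Your approach is different from the paper's and is essentially the ``more geometric proof'' the paper itself alludes to (see the sentence preceding the paper's proof of the lemma, pointing to \S\ref{SubsecRO}). The paper argues by contradiction: assuming a nonzero $V=q'(0)$ annihilates every $dx^\mu$, it shows the boundary traces $\cL^+_{q(r)}\cap O'$ are $r^2$-close to $\cL^+_q\cap O'$, reads off that the outward null rays at two boundary points $p_1,p_2$ are $r^2$-close, and then uses that the two backward broken null-geodesics from $p_1,p_2$ intersect \emph{cleanly} at $q$ to force $q(r)$ to be $r^2$-close to $q$, a contradiction. Your route instead identifies each kernel exactly as $V_0^\perp$ and finishes with a spanning argument; this is more informative (it pins down $dx^\mu$ up to scale, not just nondegeneracy).

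There is, however, a genuine gap at the step you flag yourself: the identification $T_q\mathcal P=V_0^\perp$. The sentence ``a neighborhood of $q$ in $\mathcal P$ is a piece of the classical past null cone of a future null-geodesic from $q$ in direction $V_0$'' is not correct as written---$\mathcal P$ is not the past light cone of any single interior point, and the fact that $\mathcal P$ contains the null segment $\gamma|_{[0,\epsilon]}$ does not by itself make $\mathcal P$ a null hypersurface. What actually establishes $T_q\mathcal P=V_0^\perp$ through the reflections is the first variation identity for \emph{broken} null-geodesics, namely the paper's Lemma~\ref{LemmaRODep}: for any smooth family $(q(r),V(r))$ with $p(r)=\expb_{q(r)}V(r)\in\pa M$ one has
\[
  g\bigl(q'(0),V(0)\bigr)=g\bigl(p'(0),\gamma'(1)\bigr),
\]
because at each reflection the jump $\gamma'(\sfs_j+0)-\gamma'(\sfs_j-0)$ is normal to $\pa M$ and hence orthogonal to the variation $\sfp_j'(0)\in T\pa M$, so the interior boundary terms cancel. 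Taking $p(r)\equiv p_0$ gives $T_q\mathcal P\subseteq V_0^\perp$, with equality by your dimension count; equivalently, writing $p'(0)=\mu'(0)\,dx^\mu|_q(q'(0))$ and using that transversality of $\mu$ to $\cL_q^\reg$ together with Lemma~\ref{LemmaGeoANull} gives $g(\mu'(0),\gamma'(1))\neq 0$, one obtains directly $\ker(dx^\mu|_q)=V_0^\perp$. Once this is in place, your spanning argument is fine for $n\geq 2$; for $n=1$ note that if only one component of $L_q^+M$ reached $\cU'$, then sliding $q$ along that null ray would not change $\cL_q^+\cap\cU'$, contradicting assumption~\itref{ItRDistinct}, so both components are represented in $\Omega$.
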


We give an analytic proof here, arguing by contradiction. The arguments in \S\ref{SubsecRO} below provide a different, more geometric proof.

\begin{proof}[Proof of Lemma~\ref{LemmaRSGrad}]
  Let $(-1,1)\ni r\mapsto q(r)\in S$ be a smooth path with $q(0)=q$ and $V:=q'(0)\neq 0\in T_q M$. Suppose that
  \begin{equation}
  \label{EqRSGrad0}
    dx^\mu(V)=0\ \text{for all}\ \mu\in\frakM;
  \end{equation}
  equivalently, for all $\mu\in\frakM$, the curve $r\mapsto\tilde\mu(r)$ defined by $\{\tilde\mu(r)\}=\cL_{q(r)}^+\cap\mu([-1,1])$ for small $r$, so $\tilde\mu(0)=\mu(0)$, satisfies $\tilde\mu'(0)=0$.
  
  Let now $O\subset\pa M$ be an open neighborhood of a point in $\cL^\reg_q$, as in \eqref{EqRSLReg}. Pick any non-empty $O'\Subset O$, and denote $L:=\cL_q^\reg\cap O'$. Since $\cL_q^\reg\cap O$ is smooth of codimension $1$, we can pick a smooth open map
  \[
    \ul\mu \colon L \times (-2,2) \to O
  \]
  such that $s\mapsto\ul\mu(p,s)$ is a curve with $\ul\mu(p,0)=p$, transversal to $\cL_q^\reg\cap O$, and so that $\ul\mu$ is a diffeomorphism onto its image $O''\subset\cU$.
  
  For small $r$, the preimage $\ul\mu^{-1}(\cL^+_{q(r)}\cap O'')$ is the graph of a smooth function $f(r,\cdot)\colon L\to(-2,2)$ and in fact $f\colon(-\eps,\eps)\times L\to(-2,2)$ (shrinking $\eps>0$ if necessary) is smooth. (These are consequences of Proposition~\ref{PropGeoGExpCont}.) Furthermore, $f(0,\cdot)\equiv 0$. Since we are assuming that \eqref{EqRSGrad0} holds, so $\pa_r f(0,p)\equiv 0$, the tangent space
  \[
    T(r,p) := T_{\ul\mu(p,f(r,p))}\cL^+_{q(r)}
  \]
  is $r^2$-close to $T(0,p)=T_p\cL^+_q$, uniformly for all $p\in O'$, hence the same is true for the unique future lightlike, outward pointing ray $\ell(r,p)\subset T(r,p)^\perp$, see Lemma~\ref{LemmaGeoANull}.
  
  Let now $V_1,V_2\in\Lightbp_q  M$ be two \emph{distinct} non-zero tangent vectors such that $p_j:=\expb_q(V_j)\in L$, $j=1,2$, then $\R_+\cdot\frac{d}{ds}\expb_q(s V_j)|_{s=1-0}=\ell(0,p_j)$. Denote by $W(r,p)$ a generator of $\ell(r,p)$ which depends smoothly on $(r,p)\in(-\eps,\eps)\times L$, and which is $r^2$-close to $W(0,p)$. Then the images of the two broken null-geodesics $s\mapsto\expb_{p_j}(-s W(0,p_j))$ for $j=1,2$ intersect \emph{cleanly} at $q$. But this implies that the point $q(r)$ is the \emph{unique} element near $q$ of the set of intersections of the broken null-geodesics $\expb_{\ul\mu(p_j,f(r,p_j))}(-s W(r,p_j))$, $j=1,2$, and moreover $q(r)$ depends smoothly on $f(r,p_j)$ and $W(r,p_j)$. The properties of $f$ and $W$ therefore imply that $q(r)$ is $r^2$-close to $q=q(0)$, contradicting the assumption $q'(0)\neq 0$ and completing the proof.
\end{proof}

In particular, for every $q\in S$, there exist $(n+1)$ curves $\mu_j\in\frakM$ such that the set $\{dx^{\mu_j}\colon j=0,\ldots,n\}$ is linearly independent at $q$, and therefore $(x^{\mu_j})_{j=0,\ldots,n}$ is a smooth local coordinate system near $q$. However, only knowing the collection $\cS$ of light observation sets, it is not immediately clear how to determine if a family $\mu_j$, $j=0,\ldots,n$, has this property. We thus argue indirectly: define a subalgebra
\[
  \cC\subset\cC^0(S)
\]
by declaring that $f\in\cC^0(S)$ is an element of $\cC$ if and only if for every $q\in S$, there exist an open neighborhood $U\ni q$ and curves $\mu_i\in\frakM$ (in the notation of Lemma~\ref{LemmaRSGrad}) for $0\leq i\leq n$ such that $U\subset\bigcap_{i=0}^n R(\mu_i)$, and a smooth function $F\colon\R^{n+1}\to\R$ so that
\begin{equation}
\label{EqRSFunc}
  f(q')=F(x^{\mu_0}(q'),\ldots,x^{\mu_n}(q')), \ \ q'\in U.
\end{equation}
By the arguments presented in this section, $\cC=\CI(S)$, and hence we have recovered the algebra of smooth functions on $S$ from the family of sets $\cS$.

Lastly then, a set of $n+1$ curves $\mu_i$, $0\leq i\leq n$, for which \emph{every} element of $\cC$ can be expressed in a neighborhood $U$ of $q$ in the form~\eqref{EqRSFunc} gives rise to a local coordinate system $(x^{\mu_j})_{j=0,\ldots,n}\colon U\to\R^{n+1}$. This completes the reconstruction of $S$ as a smooth manifold.

\subsection{Conformal structure}
\label{SubsecRC}

The reconstruction of the conformal structure of $S$ is straightforward: if $q\in S$, let $V\in\Lightbp_q M$ be such that $p=\expb_q(V)\in\cL_q^\reg$, and put $L:=T_p(\cL_q^+\cap\cU)$. Consider the set
\[
  Q = \bigl\{ \mu\colon(-1,1) \to S \colon \mu\ \text{is smooth},\ \mu(0)=q,\ p\in\cL_{\mu(r)}^+\ \forall r,\ T_p(\cL_{\mu(r)}^+\cap\cU)=L \bigr\}
\]
of all paths which have the same outgoing future null ray at $p$, see Lemma~\ref{LemmaGeoANull}. Then $\{\mu'(0)\colon\mu\in Q\}=\R V\in T_q M$ recovers a 1-dimensional lightlike subspace of $T_q S$. Repeating this procedure for all points $p\in\cL_q^\reg$, and noting that $\cL_q^\reg\subset\cL_q\cap\cU$ is dense by Lemma~\ref{LemmaRSTransversal}, we can reconstruct an open subset of the light cone $L_q M\subset T_q M$. But $L_q M$ is a real-analytic submanifold of $T_q M$, hence this determines $L_q M$ uniquely. Since $q\in S$ was arbitrary, this proves that we can recover $L_S M$, hence the conformal structure of $S$. This finishes the proof of the first part of Theorem~\ref{ThmR}.

\subsection{Time orientation}
\label{SubsecRO}

In order to recover the time orientation of $S$ when we are given the conformal structure of $\cU$ as well as its time orientation, we analyze the dependence of the boundary intersection point of a broken null-geodesic on its initial point:

\begin{lemma}
\label{LemmaRODep}
  Suppose $(-1,1)\ni r\mapsto (q(r),V(r))\in\Lightbp M$ is a smooth path such that $p(r):=\expb_{q(r)}V(r)\in\pa M$. Let $\gamma(r,s):=\expb_{q(r)}(s V(r))$ and $\gamma(s):=\gamma(0,s)$. Then
  \begin{equation}
  \label{EqRODep}
    g(q'(0),V(0)) = g(p'(0),\gamma'(1)).
  \end{equation}
\end{lemma}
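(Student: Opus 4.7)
The plan is to decompose $\gamma$ into its smooth geodesic segments, reparametrize each segment so its interval is $[0,1]$ (absorbing the $r$-dependence of the reflection times), apply the first variation of energy on each segment, and then telescope the boundary contributions using Snell's law.

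Let $0 = t_0 < t_1 < \cdots < t_n < t_{n+1} = 1$ denote the reflection times of $\gamma$ together with its endpoints. For $r$ near $0$, Proposition~\ref{PropGeoGExpCont} provides smooth extensions $t_i(r)$, smooth reflection points $q_i(r) := \gamma(r,t_i(r)) \in \pa M$ for $1 \leq i \leq n$, and smooth one-sided velocities $V_i^\pm(r) := \gamma_r'(t_i(r)\pm 0)$. Augment the notation by $q_0(r) := q(r)$, $q_{n+1}(r) := p(r)$, $V_0^+(r) := V(r)$, and note that $V_{n+1}^-(0) = \gamma'(1)$ and $V_i^+(r) = \rho(V_i^-(r))$ for $1 \leq i \leq n$. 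Set $c_i(r) := t_{i+1}(r)-t_i(r)>0$ and reparametrize the $i$-th segment by
\[
  \tilde\gamma_{i,r}(\tau) := \gamma_r\bigl(t_i(r) + \tau\,c_i(r)\bigr), \qquad \tau\in[0,1],\ 0\leq i\leq n.
\]
Each $\tilde\gamma_{i,r}$ is a smooth (jointly in $(r,\tau)$) family of affinely reparametrized null-geodesics with $\tilde\gamma_{i,r}(0) = q_i(r)$, $\tilde\gamma_{i,r}(1) = q_{i+1}(r)$, and endpoint tangent vectors $c_i(r)V_i^+(r)$, $c_i(r)V_{i+1}^-(r)$.

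Now I apply the first variation of energy: $E_i(r) := \tfrac{1}{2}\int_0^1 g(\tilde\gamma_{i,r}',\tilde\gamma_{i,r}')\,d\tau \equiv 0$, and in the variational identity the interior term vanishes because $\tilde\gamma_{i,0}$ is a geodesic, leaving
\[
  0 = E_i'(0) = \bigl[g(\partial_r \tilde\gamma_{i,r}|_{r=0},\tilde\gamma_{i,0}')\bigr]_{\tau=0}^{\tau=1} = c_i(0)\Bigl(g(q_{i+1}'(0), V_{i+1}^-(0)) - g(q_i'(0), V_i^+(0))\Bigr),
\]
so $g(q_{i+1}'(0), V_{i+1}^-(0)) = g(q_i'(0), V_i^+(0))$ for every $0\leq i\leq n$. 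For each $1\leq i\leq n$, the fact that $q_i(r)\in\pa M$ for all $r$ forces $q_i'(0) \in T_{q_i(0)}\pa M$, whence $g(q_i'(0),\nu)=0$, and formula~\eqref{EqGeoARefl} gives
\[
  g(q_i'(0), V_i^+(0)) = g\bigl(q_i'(0),\, V_i^-(0) - 2g(V_i^-(0),\nu)\nu\bigr) = g(q_i'(0), V_i^-(0)).
\]
Chaining the identities for $i=0,1,\ldots,n$ telescopes to
\[
  g(q'(0), V(0)) = g(q_0'(0), V_0^+(0)) = g(q_1'(0), V_1^-(0)) = g(q_1'(0), V_1^+(0)) = \cdots = g(q_{n+1}'(0), V_{n+1}^-(0)) = g(p'(0), \gamma'(1)),
\]
which is \eqref{EqRODep}.

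The main obstacle is largely bookkeeping: the simultaneous non-smoothness of $\gamma_r$ at reflections in the $s$-variable and the $r$-dependence of the reflection times. Both are sidestepped by the segment-by-segment reparametrization $\tilde\gamma_{i,r}$, whose joint smoothness in $(r,\tau)$ is precisely what Proposition~\ref{PropGeoGExpCont} guarantees. The analytic content then collapses to one instance of the first variation of energy (with vanishing integrand), combined with the elementary fact that $\rho$ preserves inner products against boundary-tangent vectors.
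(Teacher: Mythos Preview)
Your argument is correct and is essentially the paper's proof: differentiate the (identically vanishing) energy of each null-geodesic segment, pick up only the boundary terms because the central curve is a geodesic, and telescope using that $\gamma'(\sfs_j+0)-\gamma'(\sfs_j-0)\perp T\pa M\ni\sfp_j'(0)$. The only difference is cosmetic: you reparametrize each segment to a fixed interval $[0,1]$, which cleanly sidesteps the moving-limits issue, whereas the paper keeps the original parametrization and lets the Leibniz boundary terms from the $r$-dependent limits vanish because the integrand $|\pa_s\gamma|_g^2$ is identically zero.
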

\begin{proof}
  The values $0<\sfs_1(r)<\cdots<\sfs_k(r)=1$ of $s$ for which $\gamma(r,s)=0$ are smooth functions of $r$ for $r$ small, likewise the boundary intersection points $\sfp_j(r)=\gamma(r,\sfs_j(r))$; see also the discussion preceding Proposition~\ref{PropGeoGExpCont}. Define $\sfs_0(r):=0$ and $\sfp_0(r):=q(r)$. For $j=0,1,\ldots,k-1$, we then have $\int_{\sfs_j(r)}^{\sfs_{j+1}(r)} |\pa_s\gamma(r,s)|_g^2\,ds=0$ for all $r$, hence by differentiation in $r$, using that $\pa_s\gamma(r,s)$ is null for all $s$, and further using that $\gamma(r,s)$ is a null-geodesic for $s\in(\sfs_j(r),\sfs_{j+1}(r))$,
  \begin{align*}
    0 &= \int_{\sfs_j(r)}^{\sfs_{j+1}(r)} g(\pa_s\gamma(0,s),D_s\pa_r\gamma(r,s)|_{r=0})\,ds \\
      &= g\bigl(\gamma'(\sfs_{j+1}(r)-0), \sfp_{j+1}'(0)\bigr) - g\bigl(\gamma'(\sfs_j(r)+0), \sfp_j'(0)\bigr).
  \end{align*}
  Summing these identities and using that $\gamma'(\sfs_j(r)+0)-\gamma'(\sfs_j(r)-0)\perp T\pa M\ni\sfp_j'(0)$, all but the first and last terms cancel, and we obtain~\eqref{EqRODep}.
\end{proof}

Let now $(-1,1)\ni r\mapsto q(r)\in S$ be a timelike path; we show that one can determine whether $q$ is \emph{future} timelike:

\begin{prop}
\label{PropRO}
  Let $p(r)\in\cL_{q(r)}^\reg\cap\cU$ be a smooth path, and denote by $N\in T_{p(0)}\pa M$ the future-directed unit normal to the spacelike hypersurface $T_{p(0)}\cL_{q(0)}^+$. Then $q$ is future timelike if and only if $g(p'(0),N)<0$.
\end{prop}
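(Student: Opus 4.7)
The plan is to reduce the statement to Lemma~\ref{LemmaRODep}. First I would lift the boundary curve $p(r)$ to a smooth family of initial velocities $V(r)\in\Lightbp_{q(r)}M$ with $\expb_{q(r)}V(r)=p(r)$. This is possible because $p(0)\in\cL_{q(0)}^\reg$ singles out a unique ray $\R_+V(0)$ in $\Lightbp_{q(0)}M$ mapping to $p(0)$; by Lemma~\ref{LemmaRConj} the restriction of $\expb_{q(0)}$ to the hypersurface $Z=(\expb_{q(0)})^{-1}(\pa M)$ is a local diffeomorphism near $V(0)$, and combined with the smooth $q$-dependence from part~\itref{ItGeoGExpCl} of Proposition~\ref{PropGeoGExpCont}, the implicit function theorem produces the desired smooth $V(r)$, automatically future-directed and inward-pointing for small $r$. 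Lemma~\ref{LemmaRODep} then yields
\[
  g(q'(0),V(0))=g(p'(0),\gamma'(1)),
\]
where $\gamma(s)=\expb_{q(0)}(sV(0))$ and $\gamma'(1)$ is understood as the left-limit arrival velocity at $p(0)$.

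Next I would identify $\gamma'(1)$ with a positive multiple of $N$, modulo the outward normal direction. Decompose $\gamma'(1)=\gamma'(1)^\top+c\nu$, where $\nu$ is the outward $g$-unit normal to $\pa M$ and $\gamma'(1)^\top\in T_{p(0)}\pa M$. By Proposition~\ref{PropGeoANull}, $\gamma'(1)$ is future-directed, lightlike, and outward-pointing, so $c=g(\gamma'(1),\nu)>0$, and nullity forces
\[
  g_\pa(\gamma'(1)^\top,\gamma'(1)^\top)=-c^2<0;
\]
pairing $\gamma'(1)$ against any future-timelike $T\in T_{p(0)}\pa M$ (which exists because $\pa M$ is time-oriented Lorentzian) gives $g_\pa(\gamma'(1)^\top,T)=g(\gamma'(1),T)<0$, so $\gamma'(1)^\top$ is future-directed timelike. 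By Lemma~\ref{LemmaGeoANull} the tangent space $T_{p(0)}\cL_{q(0)}^+$ to the spacelike hypersurface $\cL_{q(0)}^+\cap\cU$ coincides with $\gamma'(1)^\perp\cap T_{p(0)}\pa M$, whence $\gamma'(1)^\top$ is $g_\pa$-orthogonal to $T_{p(0)}\cL_{q(0)}^+$. Being a future-directed timelike vector in $T_{p(0)}\pa M$ orthogonal to this spacelike hyperplane, $\gamma'(1)^\top=\lambda N$ for some $\lambda>0$.

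Finally, since $p'(0)\in T_{p(0)}\pa M$, the normal piece drops out:
\[
  g(p'(0),\gamma'(1))=g_\pa(p'(0),\gamma'(1)^\top)=\lambda\,g(p'(0),N),
\]
and combining with the first display gives $g(q'(0),V(0))=\lambda\,g(p'(0),N)$ with $\lambda>0$. The standard Lorentzian fact that for a future-directed null $V(0)$ the inner product $g(X,V(0))$ is nonzero on timelike $X$ and is negative precisely when $X$ is future-directed concludes the argument. The only nontrivial step I anticipate is the orthogonal decomposition of $\gamma'(1)$ and the simultaneous verification that $\gamma'(1)^\top$ is (i) future-directed and (ii) $g_\pa$-orthogonal to $T_{p(0)}\cL_{q(0)}^+$; once Lemma~\ref{LemmaGeoANull} is invoked this is pure linear algebra, and the rest of the proof is a direct chain of identities.
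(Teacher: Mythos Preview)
Your proof is correct and follows the same overall strategy as the paper: lift $p(r)$ to a smooth family $V(r)$, apply Lemma~\ref{LemmaRODep}, and identify the tangential projection of $\gamma'(1)$ with a positive multiple of $N$. The one substantive difference is in the first step. You obtain $V(r)$ from non-conjugacy (Lemma~\ref{LemmaRConj}) together with the smooth $(q,V)$-dependence of $\expb$; this works because the map $(r,V)\mapsto(r,\expb_{q(r)}(V))$ is a smooth embedding near $(0,V(0))$ (injective differential in $V$ by non-conjugacy), and the curve $(r,p(r))$ lies in its image since $p(r)\in\cL_{q(r)}^\reg$ --- so the preimage is smooth. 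The paper instead shoots \emph{backwards}: it reads off the outward null direction $W(r)$ at $p(r)$ from $T_{p(r)}\cL_{q(r)}^+$ via Lemma~\ref{LemmaGeoANull}, runs the broken null-geodesic $\mu_r(s)=\expb_{p(r)}(-sW(r))$ into the interior, and uses that $q$ is \emph{timelike} to guarantee a clean transversal intersection $\mu_r(s(r))=q(r)$, from which $V(r)$ is recovered. Your route is slightly more direct and does not use the timelikeness of $q$ for this step; the paper's route is more geometric and makes the connection to the observable data (the tangent plane of $\cL_{q(r)}^+$) explicit already in the construction of $V(r)$. The remaining steps --- the orthogonal decomposition of $\gamma'(1)$ and the identification $\gamma'(1)^\top=\lambda N$ --- are stated by the paper in one line (``the orthogonal projection of $\gamma'(1)$ to $T_{p(0)}\pa M$ is a positive multiple of $N$''); you spell out the linear algebra, which is helpful and entirely correct.
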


We stress that this criterion only uses the conformal structure and time orientation of $(\cU,g|_{\cU})$.

\begin{proof}[Proof of Proposition~\ref{PropRO}]
  We claim that $p(r)=\expb_{q(r)}(V(r))$ with $V(r)\in\Lightbp_{q(r)}M$ smooth in $r$. By definition of $\cL_{q(0)}^\reg$, there exists a unique $V(0)\in\Lightbp_{q(0)}M$ such that $p(0)=\gamma(1)$ for $\gamma(s):=\expb_{q(0)}(s V(0))$. Similarly to the proof of Lemma~\ref{LemmaRSGrad}, let $W(r)$ denote a generator of the future-directed outward pointing null ray orthogonal to $\cL^+_{q(r)}\cap\cU$, see Lemma~\ref{LemmaGeoANull}, so that $W(0)=\gamma'(1)$. Since $q$ is timelike, the intersection of the broken null-geodesic $\mu_r(s):=\expb_{p(r)}(-s W(r))$ with $q$ is unique (if necessary shrinking the interval that $r$ takes values in) and clean; therefore we can choose a smooth function $s(r)$ such that $\mu_r(s(r))=q(r)$, with $s(0)=1$. But then $V(r)=-\mu_r'(s(r))/s(r)$ is smooth, as claimed.

  We can now apply Lemma~\ref{LemmaRODep} and use that the orthogonal projection of $\gamma'(1)=W(0)\in T_{p(0)}M$ to $T_{p(0)}\pa M$ is a positive multiple of $N$; since $V(0)$ is future-directed, we conclude that $q'(0)$ is future timelike iff $g(p'(0),N)<0$, proving the proposition.
\end{proof}

This finishes the proof of the second part of Theorem~\ref{ThmR}.

\subsection*{Acknowledgments}

This research was partially conducted during the period P.H.\ served as a Clay Research Fellow. Moreover, P.H.\ gratefully acknowledges support from the Miller Institute at the University of California, Berkeley. G.U.\ is partially supported by the NSF grant DMS-1265958, a Si-Yuan Professorship at HKUST, a Simons Fellowship, and the Academy of Finland. The authors would like to thank an anonymous referee for carefully reading the manuscript and suggesting a number of improvements.

\bibliographystyle{alpha}

\end{document}